\numberwithin{equation}{section}
\newcommand{\ssp}{\hspace{1pt}}
\renewcommand{\Re}{\mathop{\mathrm{Re}}}
\renewcommand{\Im}{\mathop{\mathrm{Im}}}
\newcommand\lozv[2]
\newtheorem{proposition}{Proposition}[section]
\newtheorem{lemma}[proposition]{Lemma}
\newtheorem{theorem}[proposition]{Theorem}
\theoremstyle{definition}
\newtheorem{definition}[proposition]{Definition}
\newtheorem{remark}[proposition]{Remark}
\begin{document}
\title{Asymptotics of noncolliding $q$-exchangeable random walks}

\author{Leonid Petrov, Mikhail Tikhonov}

\date{}

\maketitle

\begin{abstract} 
	We consider a process of noncolliding $q$-exchangeable random walks on $\mathbb{Z}$ making steps $0$ (``straight'') and $-1$ (``down''). A single random walk is called $q$-exchangeable if under an elementary transposition of the neighboring steps $ (\textnormal{down},\textnormal{straight}) \to (\textnormal{straight}, \textnormal{down}) $ the probability of the trajectory is multiplied by a parameter $q\in(0,1)$. Our process of $m$ noncolliding $q$-exchangeable random walks is obtained from the independent $q$-exchangeable walks via the Doob's $h$-transform for a certain nonnegative eigenfunction $h$ with the eigenvalue less than $1$.  The system of $m$ walks evolves in the presence of an absorbing wall at $0$.

	We show that the trajectory of the noncolliding $q$-exchangeable walks started from an arbitrary initial configuration forms a determinantal point process, and express its kernel in a double contour integral form. This kernel is obtained as a limit from the correlation kernel of $q$-distributed random lozenge tilings of sawtooth polygons.

	In the limit as $m\to \infty$, $q=e^{-\gamma/m}$ with $\gamma>0$ fixed, and under a suitable scaling of the initial data, we obtain a limit shape of our noncolliding walks and also show that their local statistics are governed by the incomplete beta kernel. The latter is a distinguished translation invariant ergodic extension of the two-dimensional discrete sine kernel.
\end{abstract}


\section{Introduction}
\label{sec:intro}

The main object of the present paper is an ensemble $\Upsilon_m$ of random point configurations in the two-dimensional lattice $\mathbb{Z}^{2}_{\ge0}$ which belongs to two broad classes: \emph{noncolliding random walks} and \emph{$q$-distributed random lozenge tilings}.

\bigskip

The noncolliding random walks on $\mathbb{Z}$ is a Markov
chain of a fixed number $m$ of particles performing
independent simple random walks.  They interact through the
condition that they never collide, which is equivalent to
Coulomb repulsion.  This model can be traced back to
Karlin--McGregor \cite{KMG59-Coincidence}, see
K{}\"onig--O’Connell--Roch \cite{konig2002non} for a
detailed exposition.  The noncolliding random walks are a
discretization of the celebrated $\beta=2$ Dyson Brownian
motion \cite{dyson1962brownian} describing the eigenvalues
of the Gaussian Unitary Ensemble.
More recently, noncolliding random walks for other random
matrix $\beta$ values were considered by Huang
\cite{huang2021beta} and Gorin--Huang
\cite{gorin2022dynamical}.

\smallskip

We start from a $q$-deformation of the simple random walk,
namely, the \emph{$q$-exchangeable walk} introduced by
Gnedin--Olshanski \cite{Gnedin2009}. Under an
elementary transposition of the walks' increments,
the probability of the trajectory is multiplied by $q$ or $q^{-1}$ (depending on the 
order of the increments),
where $q\in(0,1)$ is a parameter.
When $q=1$, this property reduces to the usual exchangeability.
We show that the condition for independent $q$-exchangeable
random walks never to collide is realized through 
a Doob's $h$-transform for an explicit nonnegative
eigenfunction with eigenvalue $q^{\binom m2}$. 
From this
perspective, our process $\Upsilon_m$ is a $q$-deformation of the
classical model of noncolliding simple random walks
(and, moreover, reduces to this classical model
in a $q\to1$ limit).
Note that all previously studied
noncolliding random walks (including the model of
Borodin--Gorin \cite{BG2011non} where $q$ enters the particle 
speeds $1,q^{-1},q^{-2},\ldots $ and thus
plays a different role) satisfy
the usual, undeformed exchangeability.

\smallskip

Our $q$-dependent process $\Upsilon_m$ is a part of a wider family
of Markov chains with Macdonald parameters $(q,t)$
defined recently by Petrov \cite{petrov2022noncolliding}.
The asymptotics of the latter should be accessible through the 
technique of Gorin--Huang 
\cite{gorin2022dynamical}, but here we stay within the $t=q$
case 
(corresponding to $\beta=2$ in random matrices)
which allows to show local bulk universality.

\smallskip

Let us add that in continuous time and space, noncolliding Brownian motions weighted by the area penalty
and their scaling limit, the 
Dyson Ferrari–Spohn diffusion,
were considered by
Caputo--Ioffe--Wachtel
\cite{CaputoIoffe2019Confinement},
Ferrari--Shlosman
\cite{ferrari2023airy2},
in connection with interfaces in the Ising model in two and three dimensions.

\bigskip

Let us now turn to random lozenge tilings, and provide a
very brief overview of the relevant models and typical
asymptotic results. Random lozenge tilings (equivalently,
	random dimer coverings / perfect matchings on the
hexagonal grid) is a well-studied two-dimensional
statistical mechanical model in which correlations are
expressible as determinants of the inverse Kasteleyn matrix,
as first shown by Kasteleyn \cite{Kasteleyn1967} and
Temperley--Fisher \cite{temperley1961dimer}. See also the
lecture notes by Kenyon \cite{Kenyon2007Lecture} and Gorin
\cite{gorin2021lectures}.
There are several types of asymptotic results about random tilings, 
here we consider the 
\emph{limit shape} and the \emph{bulk} (\emph{lattice}) \emph{universality}.
The latter requires either an
explicit inverse of the Kasteleyn matrix (which heavily depends
on the boundary conditions),
or an effective asymptotic control of this inverse.

\smallskip

The limit shape (law of large numbers) phenomenon states
that the normalized height function of a random tiling
tends
to a nonrandom limiting height function. The latter has a
variational description (Cohn--Kenyon--Propp \cite{CohnKenyonPropp2000} and
Kenyon--Okounkov \cite{OkounkovKenyon2007Limit}). 
In many problems, 
in particular, for uniformly random lozenge tilings 
of the so-called \emph{sawtooth polygons} 
considered in Petrov \cite{Petrov2012}
(see \Cref{fig:sawtooth_limits_intro}),
the variational problem can 
be solved in terms of algebraic equations for the gradient of the 
limiting height function.

\smallskip

In a neighborhood where the limiting height function is non-flat,
the
local (bulk) correlations of a random tiling should be
described by an ergodic translation invariant Gibbs measure
on tilings of the whole plane. 
The latter measure is unique
for a given gradient, and its 
correlations are described by the \emph{incomplete beta kernel} (an
extension of the \emph{discrete sine kernel}); see
Sheffield \cite{Sheffield2008}, Kenyon--Okounkov--Sheffield
\cite{KOS2006}.
This universal bulk behavior has been proven in full
generality for uniformly random tilings by 
Aggarwal \cite{aggarwal2019universality},
following the earlier works
for the hexagon 
(Baik--Kriecherbauer--McLaughlin--Miller \cite{BKMM2003}, 
Gorin \cite{Gorin2007Hexagon}),
sawtooth polygons (Petrov \cite{Petrov2012}), 
and a lozenge tiling model 
corresponding to the noncolliding Bernoulli simple random walks
(Gorin--Petrov \cite{GorinPetrov2016universality}).
The curve separating the region where the height
function is non-flat is referred to as the \emph{frozen boundary}.
For uniformly random tilings of polygons this curve is algebraic.

\smallskip

A $\mathfrak{q}$-deformation of uniformly random lozenge tilings
is obtained by assigning probability weights proportional to 
$\mathfrak{q}^{\mathrm{volume}}$, where the volume is measured under the height function. 
This allows considering tilings of infinite 
domains (also sometimes called plane partitions)
if the partition function is summable for $\mathfrak{q}\in(0,1)$
(we use a different font for $\mathfrak{q}$
to distinguish from the $q$-exchangeable parameter).
Cerf--Kenyon \cite{cerf2001low} studied the $\mathfrak{q}$-weighted
plane partitions and proved a limit shape result.
Okounkov--Reshetikhin \cite{okounkov2003correlation}
found asymptotics of local correlations as $\mathfrak{q}\nearrow1$, 
and
introduced the incomplete beta kernel to describe them.
In the subsequent work 
\cite{Okounkov2005}, they looked at 
random skew plane partitions which may have a back wall.
Depending on the number of turns of the back wall, 
the frozen boundary may form several
asymptotes extending to infinity, see
\cite[Figures 2, 15, 16]{Okounkov2005} for illustrations.

\smallskip

\begin{figure}[htpb]
	\centering
	\includegraphics[width=.35\textwidth]{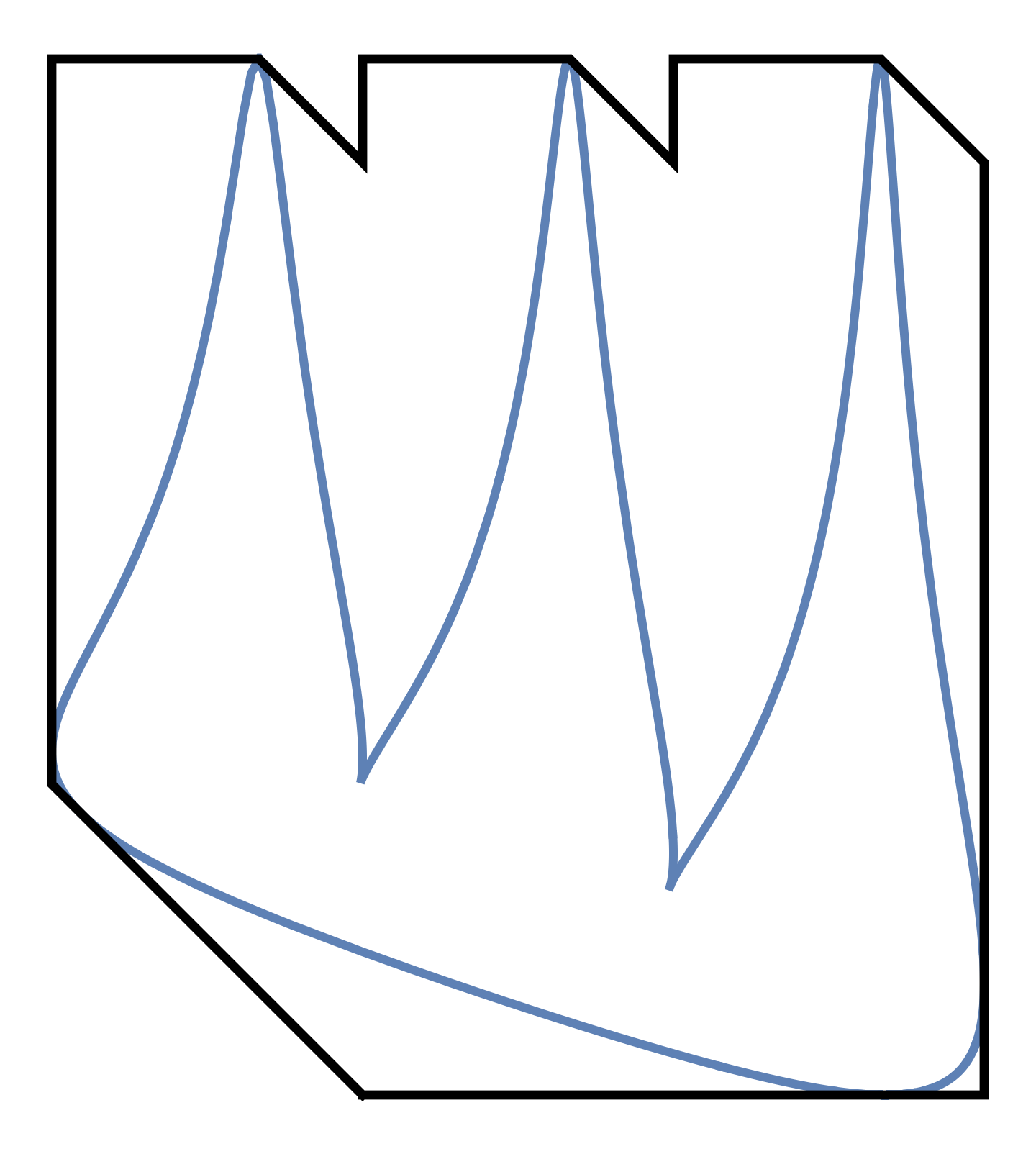}
	\qquad 
	\qquad 
	\includegraphics[width=.35\textwidth]{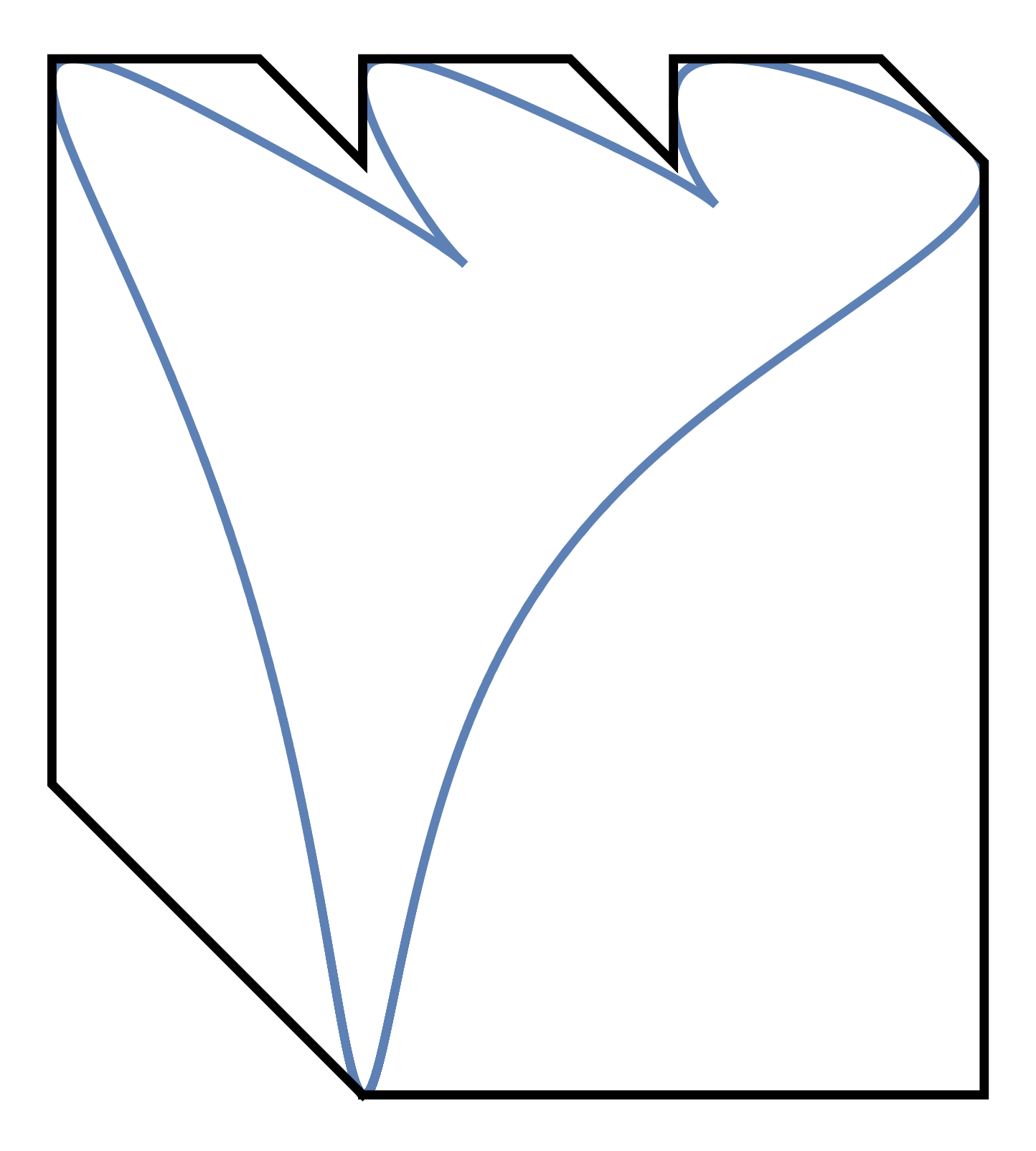}
	\caption{A sawtooth polygon and frozen boundaries
		for the
		$\mathfrak{q}$-weighted lozenge tilings 
		as $\mathfrak{q}=e^{-\upgamma/N}\to1$, where $N\to +\infty$ is the linear
		size of the polygon. Here $\upgamma>0$ on the left and $\upgamma<0$ on the right.
		These frozen boundaries were obtained by 
		Di Francesco--Guitter 
		\cite{DiFran2019qvol}.}
	\label{fig:sawtooth_limits_intro}
\end{figure}

For $\mathfrak{q}$-weighted random tilings of (bounded)
sawtooth polygons, the frozen boundary 
was computed by di Francesco--Guitter
\cite{DiFran2019qvol} using the tangent method.
Gorin--Huang \cite{gorin2022dynamical}
recently 
obtained it 
for an even more general ensemble
with $(\mathfrak{q},\kappa)$-weights
introduced
by Borodin--Gorin--Rains \cite{borodin-gr2009q}.
The boundaries of 
\cite{DiFran2019qvol} turn into the ones of 
\cite{Okounkov2005} in a limit when
the side of the sawtooth polygon 
with multiple defects tends to infinity.
Then
the ``cloud'' parts adjacent to this side
degenerate into multiple asymptotes.

\smallskip 

For sawtooth polygons, bulk universality of the
$\mathfrak{q}$-weighted lozenge tilings as $\mathfrak{q}\to
1$ is still open except for the hexagon case settled by
Borodin--Gorin--Rains \cite{borodin-gr2009q}. This is
despite the explicit double contour integral expression for
the correlation kernel $K_{\mathrm{loz}}$ (a close relative
of the inverse Kasteleyn matrix) given by Petrov
\cite{Petrov2012} which we recall in
\eqref{eq:kernel_from_Pet14} below.  The reason is that the
$\mathfrak{q}$-hypergeometric function under the integral in
$K_{\mathrm{loz}}$ has hindered its direct asymptotic
analysis.

\smallskip

It is known that the $\mathfrak{q}$-dependent kernel
$K_{\mathrm{loz}}$ (and its subsequent asymptotic analysis)
simplifies in several cases. First, setting
$\mathfrak{q}=1$, we get a kernel for uniformly random
tilings, which has led to many asymptotic results; see
Petrov \cite{Petrov2012}, \cite{Petrov2012GFF},
Toninelli--Laslier \cite{laslier2013lozenge}, Gorin--Petrov
\cite{GorinPetrov2016universality}, Aggarwal
\cite{aggarwal2019universality}. In another regime, keeping
$\mathfrak{q}<1$ fixed and sending the top boundary of the
polygon (which has several turns) up to infinity, as in
\Cref{fig:sawtooth_limits_intro}, left, one can show that
the $\mathfrak{q}$-weighted random tilings in a bottom part
of the picture converge to the random plane partitions with
a back wall studied by Okounkov--Reshetikhin
\cite{Okounkov2005}. Moreover, in this limit, the
correlation kernel $K_{\mathrm{loz}}$ turns into the simpler
kernel \cite[(25)]{Okounkov2005} obtained originally via the
technique of Schur processes. The latter kernel is amenable
to asymptotic analysis by the standard steepest descent
method, which in particular leads to the frozen boundary
with several asymptotes (already visible in
\Cref{fig:sawtooth_limits_intro}) and bulk universality.

\bigskip

Our model $\Upsilon_m$ of noncolliding $q$-exchangeable
random walks presents a new case when the complicated kernel
$K_{\mathrm{loz}}$ simplifies. Namely, if instead of
$\mathfrak{q}<1$, we keep $\mathfrak{q}>1$, and send the
\emph{bottom} boundary of the polygon down to infinity as in
\Cref{fig:sawtooth_limits_intro}, right, then around the top
boundary of the polygon we have the convergence to the
noncolliding $q$-exchangeable random walks, with
$q=\mathfrak{q}^{-1}$. The resulting correlation kernel
$K_{\mathrm{walks}}$ has an explicit double contour integral form for
any initial condition in the noncolliding walks. From this
kernel, we obtain the limit shape and bulk universality
results as the number $m$ of walks goes to infinity and
$q=e^{-\gamma/m}\to 1$. The frozen boundary for the
noncolliding walks 
(see \Cref{fig:frozen_boundaries}
for examples)
may form several ``cloud turns'', and always
has exactly one asymptote
(already seen forming in
\Cref{fig:sawtooth_limits_intro}, right).

\smallskip

We conclude that limit shape results for ensembles of random
lozenge tilings are accessible by a variety of methods, but
bulk universality requires knowledge or precise control of the
inverse Kasteleyn matrix (or its close relative, the correlation kernel).
The
$q^{\mathrm{volume}}$ ensemble of random lozenge tilings of sawtooth
polygons is an example of a model where such control 
is still out of reach. In the present paper, we 
explore a new degeneration of this lozenge tiling
model, which is amenable to asymptotic analysis, as well as has a very 
nice interpretation as noncolliding $q$-exchangeable random walks
with arbitrary initial conditions.

\subsection*{Outline}

Above in the Introduction, we gave an overview of 
where our model
$\Upsilon_m$ fits into the classes of noncolliding 
walks and random lozenge tilings.
Below in \Cref{sec:main_results}
we describe our model and results in full detail. 
In particular, we show that our process $\Upsilon_m$
coincides with the system of 
independent $q$-exchangeable random walks 
conditioned never to collide.
The proofs of the determinantal kernel and the asymptotic results
are given in
\Cref{sec:from_tilings_to_walks,sec:limit_in_kernel,sec:asymptotic_analysis}.

\subsection*{Acknowledgments}

LP is grateful to Zhongyang Li for an initial discussion of the problem. The work was partially supported by the NSF grants DMS-1664617 and DMS-2153869, and the Simons Collaboration Grant for Mathematicians 709055. This material is based upon work supported by the National Science Foundation under grant DMS-1928930 while the first author participated in the program ``Universality and Integrability in Random Matrix Theory and Interacting Particle Systems'' hosted by the Mathematical Sciences Research Institute in Berkeley, California, during the Fall 2021 semester.

\section{Model and main results}
\label{sec:main_results}

In this section, we discuss our model of noncolliding $q$-exchangeable random walks and formulate our main results on its determinantal structure and asymptotic behavior.

\subsection{The $q$-exchangeable random walk}
\label{sub:intro_single_walk}

Consider a discrete-time simple random walk $\{y(t)\}_{t\in \mathbb{Z}_{\ge0}}$
on $\mathbb{Z}$ making steps 
$0$ (``straight'') and $-1$ (``down'')
according to independent flips of a given (possibly biased) 
coin.\footnote{It is convenient to have random walks which move down on the one-dimensional 
integer lattice $\mathbb{Z}$.} It is well-known that the sequence of steps in this random walk is 
\emph{exchangeable}, that is,
\begin{equation}
	\label{eq:exchangeable}
	\mathop{\mathrm{Prob}}
	\left( y(t+1)-y(t)=\epsilon_1,\ssp y(t+2)-y(t+1)=\epsilon_2,\ldots
	,y(t+k)-y(t+k-1)=\epsilon_k \right)
\end{equation}
is symmetric in $\epsilon_1,\ldots,\epsilon_k\in\left\{ 0,-1 \right\} $ for any $t\ge0$ and $k\ge1$.

Gnedin--Olshanski
\cite{Gnedin2009}
considered a $q$-deformation of the concept of exchangeability 
depending on a parameter $q\in(0,1)$.
For a \emph{$q$-exchangeable} random walk
$\{y(t)\}_{t\in \mathbb{Z}_{\ge0}}$, the quantity
\eqref{eq:exchangeable} is no longer symmetric in $\epsilon_1,\ldots,\epsilon_k $.
Instead, we have the following $q$-symmetry under elementary transpositions
$\epsilon_i\leftrightarrow \epsilon_{i+1}$:
\begin{multline*}
	\mathop{\mathrm{Prob}}
	\left( \ldots,  y(t+i)-y(t+i-1)=\epsilon_i,\ssp
		y(t+i+1)-y(t+i)=\epsilon_{i+1},\ldots
	\right)
	\\=q^{\epsilon_{i}-\epsilon_{i+1}}
	\mathop{\mathrm{Prob}}
	\left( \ldots,  y(t+i)-y(t+i-1)=\epsilon_{i+1},\ssp
		y(t+i+1)-y(t+i)=\epsilon_{i},\ldots
	\right).
\end{multline*}
In words, under a
transposition of the neighboring steps $
(\textnormal{down},\textnormal{straight}) \to
(\textnormal{straight}, \textnormal{down}) $, the
probability of the trajectory is multiplied by $q$.

\begin{remark}
	\label{rmk:q_convention}
	By convention,
	throughout all exact computations in the paper, the parameter
	$q$ is a fixed number between $0$ and $1$. For the asymptotic analysis of the noncolliding
	$q$-exchangeable random walks, we send $q\nearrow 1$. These asymptotic
	results are formulated
	in \Cref{sub:intro_asymptotics}
	and proven in \Cref{sec:asymptotic_analysis}.
\end{remark}

The space of laws (probability distributions) of
$q$-exchangeable random walks is a convex simplex. That is,
the convex combination (mixture) of probability laws
preserves $q$-exchangeability. By a $q$-analogue of the de
Finetti's theorem proven in \cite{Gnedin2009}, extreme
$q$-exchangeable random walks (that is, extreme points of
this simplex) are parametrized by $\Delta\coloneqq \left\{
0,1,2,\ldots  \right\}\cup\left\{ \infty \right\}$. In
detail, for any $q$-exchangeable random walk $y(t)$, there
exists a probability measure $\mu$ on $\Delta$ such that the
law of $y(t)$ is a mixture of the extreme distributions by
means of $\mu$.

\medskip

Our first observation is that all extreme $q$-exchangeable
random walks parametrized by points of
$\Delta_{\mathrm{fin}}\coloneqq\left\{ 0,1,2,\ldots
\right\}\subset \Delta$ are \emph{one and the same
space-inhomogeneous random walk} with varying initial
configuration and an absorbing wall at $0$.

\begin{definition}[The $q$-exchangeable random walk in $\mathbb{Z}_{\ge0}$]
	\label{def:q_exch_walk}
	Let $\Upsilon_1$ be the following one-step Markov transition probability, where
	$x,y\in \mathbb{Z}_{\ge0}$:
	\begin{equation*}
		\Upsilon_1(x,y)
		\coloneqq
		\begin{cases}
			q^x,&y=x;\\
			1-q^x,&y=x-1;\\
			0,&\textnormal{otherwise}.
		\end{cases}
	\end{equation*}
\end{definition}

\begin{proposition}
	\label{prop:q_exch}
	\begin{enumerate}[\bf1.\/]
		\item 
			Started from any $x\in \mathbb{Z}_{\ge0}$,
			the random walk with transition probabilities $\Upsilon_1$ is $q$-exchangeable.
		\item 
			Any extreme $q$-exchangeable random walk 
			parametrized by a point $x\in \Delta_{\mathrm{fin}}$
			can be identified with the random walk $\Upsilon_1$ started from $x$.
	\end{enumerate}
\end{proposition}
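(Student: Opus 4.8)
The plan is to treat the two parts separately: the first by a direct verification from the one-step rule $\Upsilon_1$, the second by matching cylinder probabilities against the known description of the extreme $q$-exchangeable laws from \cite{Gnedin2009}.

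For Part 1, I would fix a starting point $x$, a time $t\ge 0$, and a word $\epsilon_1,\dots,\epsilon_k\in\{0,-1\}$. Conditioning on $y(t)=z$ and using the Markov property, the probability in \eqref{eq:exchangeable} equals $\sum_z \mathrm{Prob}(y(t)=z)\prod_{j=1}^k \Upsilon_1(z_{j-1},z_j)$ with $z_0=z$ and $z_j=z+\epsilon_1+\dots+\epsilon_j$. An elementary transposition $\epsilon_i\leftrightarrow\epsilon_{i+1}$ affects only the two factors spanning that window, and when $\{\epsilon_i,\epsilon_{i+1}\}=\{0,-1\}$ a one-line computation gives
\[
\frac{\Upsilon_1(w,w)\,\Upsilon_1(w,w-1)}{\Upsilon_1(w,w-1)\,\Upsilon_1(w-1,w-1)}
=\frac{q^{w}(1-q^{w})}{(1-q^{w})\,q^{w-1}}=q,
\]
uniformly in $w=z_{i-1}$ (both sides vanish if $w=0$), which is exactly the factor $q^{\epsilon_i-\epsilon_{i+1}}$ demanded by $q$-exchangeability; the case $\epsilon_i=\epsilon_{i+1}$ is trivial. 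Since the same multiplicative factor is applied to every summand, it is applied to the sum, and as $t$ is arbitrary this yields Part 1.

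For Part 2, I would first observe that the $\Upsilon_1$-walk started from $x$ is absorbed at $0$ (because $\Upsilon_1(0,0)=1$) after exactly $x$ down-steps (because $\Upsilon_1(w,w-1)=1-q^{w}>0$ for $w\ge 1$), so its increment law is supported on sequences with exactly $x$ entries equal to $-1$. By \cite{Gnedin2009}, the extreme $q$-exchangeable law with parameter $x\in\Delta_{\mathrm{fin}}$ is characterized by this support property, and I would recall the short uniqueness argument within that class: writing $w(S)$ for the probability that the down-steps occur exactly at a prescribed size-$x$ set $S=\{p_1<\dots<p_x\}$, the $q$-symmetry of Part 1 shows that sliding one down-step into the adjacent straight slot multiplies the probability by $q$; since $\{1,\dots,x\}$ is joined to every $S$ by $\sum_j(p_j-j)$ such slides through straight slots, $w(S)=w(\{1,\dots,x\})\,q^{\sum_j(p_j-j)}$, and the normalization $\sum_S q^{\sum_j(p_j-j)}=\prod_{i=1}^x(1-q^i)^{-1}$ (a partition generating function, via $r_j=p_j-j$) forces $w(S)=(q;q)_x\,q^{\sum_j(p_j-j)}$. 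Then I would telescope the transition probabilities of the $\Upsilon_1$-walk from $x$ along such a trajectory: the $x$ down-step factors multiply to $\prod_{i=1}^x(1-q^i)=(q;q)_x$, and the straight-step factors multiply to $q^{E}$ with $E=x(p_1-1)+\sum_{j=1}^{x-1}(x-j)(p_{j+1}-p_j-1)$, which a short rearrangement identifies with $\sum_j(p_j-j)$. Hence the $\Upsilon_1$-walk from $x$ realizes precisely this measure, i.e.\ the extreme $q$-exchangeable walk with parameter $x$.

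The only step that needs care, rather than mere calculation, is lining up conventions with \cite{Gnedin2009} and invoking (or re-deriving) the correct description of the extreme points with finite parameter — equivalently, the uniqueness of a $q$-exchangeable law with a prescribed finite number of down-steps. The two explicit computations, the $q$-exchangeability check and the telescoping of the cylinder probability, are routine.
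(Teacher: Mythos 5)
Your proof is correct. For Part~1 you use exactly the same key identity as the paper, namely $\Upsilon_1(w,w)\Upsilon_1(w,w-1)=q^w(1-q^w)=q\cdot\Upsilon_1(w,w-1)\Upsilon_1(w-1,w-1)$, merely wrapped in an explicit conditioning-on-$y(t)$ argument; this is the paper's Part~1 verbatim in substance. For Part~2 the paper simply cites \cite[Proposition~4.1]{Gnedin2009} and leaves the comparison to the reader, whereas you reconstruct the identification self-containedly: you derive the cylinder weights $w(S)=(q;q)_x\,q^{\sum_j(p_j-j)}$ of the extreme law with finite parameter $x$ from the $q$-symmetry plus the partition generating function $\prod_{i=1}^x(1-q^i)^{-1}$, and then telescope the $\Upsilon_1$-transition probabilities along a trajectory with down-steps at $S=\{p_1<\dots<p_x\}$; both computations check out (the down-step factors give $(q;q)_x$ and the exponent $E=x(p_1-1)+\sum_{j=1}^{x-1}(x-j)(p_{j+1}-p_j-1)$ does equal $\sum_j(p_j-j)$). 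The one point you correctly flag as needing care — that the extreme law with parameter $x\in\Delta_{\mathrm{fin}}$ is the unique $q$-exchangeable law with exactly $x$ down-steps — is precisely the content you would still need to import from, or re-derive alongside, \cite{Gnedin2009}; with that in hand your argument is a complete and more explicit version of the paper's one-line citation, at the cost of a longer write-up.
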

\begin{proof}
	For the first part, we have
	\begin{equation*}
		\Upsilon_1(x,x)\Upsilon_1(x,x-1)=q^x(1-q^x)=q\cdot \Upsilon_1(x,x-1)\Upsilon_1(x-1,x-1),
	\end{equation*}
	which immediately implies the $q$-exchangeability.
	The second part follows by comparing our random walk with the one described in 
	\cite[Proposition 4.1]{Gnedin2009}.
\end{proof}

\begin{remark}
	\label{rmk:limit_q_to_1}
	In the scaling limit as $q=e^{-\varepsilon}\to1$ and $x=\lfloor \varepsilon^{-1}\log(1/p) \rfloor +\tilde x$,
	where $p\in(0,1)$ and $\tilde x\in \mathbb{Z}$,
	the $q$-exchangeable random walk on $\mathbb{Z}_{\ge0}$
	turns into the usual simple random walk on $\mathbb{Z}$.
	The latter corresponds to independent coin flips with the probability of Heads equal to $p$.
\end{remark}

\subsection{Noncolliding simple random walks}
\label{sub:intro_model_noncolliding}

The central object of the present paper is a model of
several interacting $q$-exchangeable random walks which never collide. 
Here we first discuss the well-known
model of noncolliding simple random walks on $\mathbb{Z}$.
Thanks to \Cref{rmk:limit_q_to_1}, this well-known model may be viewed as a 
$q\to1$ limit of our model of noncolliding $q$-exchangeable random walks.
We define the latter in detail in \Cref{sub:q_noncoll_intro} below.

The model of noncolliding simple random walks on $\mathbb{Z}$
dates back to Karlin--McGregor \cite{KMG59-Coincidence}.
The model of noncolliding Brownian motions on $\mathbb{R}$
is the celebrated Dyson Brownian motion for the Gaussian Unitary Ensemble
\cite{dyson1962brownian}.
A systematic treatment of noncolliding random walks
connecting them to determinantal
point processes (in particular, orthogonal polynomial ensembles)
is performed by
K{}\"onig--O’Connell--Roch
\cite{konig2002non}.

The one-step Markov transition probability of a model of $m$ independent discrete-time
simple random walks on $\mathbb{Z}$ (making steps $0$ and $-1$ with probabilities $p$ and $1-p$)
conditioned never to collide has the form of a Doob's $h$-transform
\cite[2.VI.13]{doob1984classical}
\begin{equation}
	\label{eq:h_transform_q_is_1_intro}
	\Upsilon_m^{(q=1)}(\vec x, \vec y)=
	\frac{h_m^{(q=1)}(\vec y)}{h_m^{(q=1)}(\vec x)}\ssp 
	\underbrace{\prod_{i=1}^{m}
	\big( 
		p\ssp \mathbf{1}_{y_i=x_i}
		+
		(1-p)\ssp \mathbf{1}_{y_i=x_i-1}
\big)}_{\Upsilon_{m,\mathrm{ind}}^{(q=1)}(\vec x,\vec y)},
\end{equation}
where $\vec x=(x_1>\ldots>x_m )$, $\vec y=(y_1>\ldots>y_m )$, $x_i,y_i\in \mathbb{Z}$.
Here 
\begin{equation}
	\label{eq:Vandermonde_q_is_1}
	h_m^{(q=1)}(\vec x)\coloneqq \prod_{1\le i<j\le m}(x_i-x_j)
\end{equation}
is the Vandermonde determinant, and the product over $i$ in \eqref{eq:h_transform_q_is_1_intro} is simply the one-step Markov transition probability of a collection of $m$ independent simple random walks, which we denoted by 
$\Upsilon_{m,\mathrm{ind}}^{(q=1)}$. In \eqref{eq:h_transform_q_is_1_intro} and throughout the text, $\mathbf{1}_{A}$ stands for the indicator of an event or a condition~$A$.

The fact that \eqref{eq:h_transform_q_is_1_intro} defines a random walk of $m$ particles is not straightforward. 
The key property is that the right-hand side sums to $1$ over all $\vec y$.
Equivalently, $h_m^{(q=1)}(\vec x)$ is a nonnegative harmonic function 
for the collection of $m$ independent simple random walks:
\begin{equation}
	\label{eq:harmonic_q_is_1}
	\sum_{\vec y}
	h_m^{(q=1)}(\vec y)\ssp
	\Upsilon_{m,\mathrm{ind}}^{(q=1)}(\vec x,\vec y)
	= 
	h_m^{(q=1)}(\vec x).
\end{equation}

\subsection{Noncolliding $q$-exchangeable random walks}
\label{sub:q_noncoll_intro}

Here we describe our main model $\Upsilon_m$, which is a
$q$-deformation of the classical model of noncolliding
simple random walks from
\Cref{sub:intro_model_noncolliding}.  For $m=1$, the model
$\Upsilon_1$ is the $q$-exchangeable random walk from
\Cref{sub:intro_single_walk} above.

Denote by $\mathbb{W}_m$ the space of $m$-particle configurations
in $\mathbb{Z}_{\ge0}$:
\begin{equation}
	\label{eq:W_m_space}
	\mathbb{W}_m\coloneqq
	\{
		\vec{x}=(x_1>x_2>\ldots>x_m\ge0 )
	\}\subset \mathbb{Z}_{\ge0}^{m},
\end{equation}
and set $|\vec x|\coloneqq x_1+\ldots+x_m$.
\begin{definition}
	\label{def:Upsilon_m}
	We consider a Markov chain $\Upsilon_m$ on $\mathbb{W}_m$ with the following one-step transition probabilities,
	where 
	$\vec x,\vec y\in \mathbb{W}_m$:
	\begin{equation}
		\label{eq:intro_Upsilon}
		\Upsilon_m(\vec x,\vec y)= 
		q^{-\binom m2+(m-1)\left( |\vec x|-|\vec y| \right)}
		\prod_{1\le i<j\le m}\frac{q^{y_j}-q^{y_i}}{q^{x_j}-q^{x_i}}
			\underbrace{\prod_{i=1}^{m}
			\bigl( q^{x_i}\mathbf{1}_{y_i=x_i}+
		(1-q^{x_i})\mathbf{1}_{y_i=x_i-1}\bigr)}_{\Upsilon_{m,\mathrm{ind}}(\vec x,\vec y)}.
	\end{equation}
	See \Cref{fig:walks_intro}, left, for an illustration of the trajectory
	of the process $\Upsilon_m$ (with $m=4$) started from $\vec x=(7,6,3,1)$.
	As time goes to infinity, the dynamics $\Upsilon_m$ reaches its unique absorbing state
	$\delta_m\coloneqq (m-1,m-2,\ldots,1,0 )\in \mathbb{W}_m$.
	We call the Markov process $\Upsilon_m$ the \emph{noncolliding $q$-exchangeable random walks}.
		
\end{definition}

The process $\Upsilon_m$ was introduced recently by Petrov \cite{petrov2022noncolliding}.
It is a particular $t=q$ case of the Macdonald noncolliding random walks, and 
the main goal of the present paper is a detailed asymptotic investigation of the 
$t=q$ case. It turns out that the $t=q$ process is determinantal with an explicit 
double contour integral kernel.
The asymptotic analysis of the general $(q,t)$ Macdonald 
case should be performed by different, non-determinantal methods (potentially 
based on \cite{gorin2022dynamical}), but
we leave this question out of the scope of the present work.

From the results of
\cite{petrov2022noncolliding} it follows that 
for any $\vec x\in \mathbb{W}_m$, the quantities $\Upsilon_m(\vec x,\vec y)$ are nonnegative and sum to 
$1$ over all $\vec y\in \mathbb{W}_m$. Equivalently, 
the following $q$-deformation of the Vandermonde determinant
\begin{equation}
	\label{eq:q_Vandermonde}
	h_m(\vec x)\coloneqq q^{-(m-1)|\vec x|}\prod_{1\le i<j\le m}(q^{x_j}-q^{x_i})
\end{equation}
is a nonnegative eigenfunction for 
$\Upsilon_{m,\mathrm{ind}}$, 
the collection of $m$ independent 
$q$-exchangeable random walks:
\begin{equation}
	\label{eq:h_transform_for_q}
	\sum_{\vec y\in \mathbb{W}^m}
	h_m(\vec y)\ssp
	\Upsilon_{m,\mathrm{ind}}(\vec x,\vec y)
	= 
	q^{\binom m2}
	h_m(\vec x).
\end{equation}
This property is similar to \eqref{eq:harmonic_q_is_1}, but
observe that here the function $h_m$ is not harmonic with
eigenvalue $1$, but instead, its eigenvalue is equal to
$q^{\binom {m}{2}}$.

From \eqref{eq:intro_Upsilon}--\eqref{eq:h_transform_for_q}
we see that the transition probabilities of the dynamics $\Upsilon_m$ 
have a Doob's $h$-transform like form.
Moreover, similarly to the simple random walk case in \Cref{sub:intro_model_noncolliding},
our process
$\Upsilon_m$ can be obtained from 
the independent $q$-exchangeable walks
$\Upsilon_{m,\mathrm{ind}}$ 
by conditioning them never to collide.
To formulate this result (\Cref{prop:never_to_collide_q} below), we need some notation.
Denote 
by $\Upsilon_1^{(T)}$ the $T$-step transition probability
of the single $q$-exchangeable random walk.
One can readily compute this probability assuming 
that $T\ge x-y$:\footnote{Here and throughout
the paper we use the $q$-Pochhammer symbols notation
\begin{equation*}
	(a ; q)_k\coloneqq (1-a)(1-a q) \ldots(1-a q^{k-1}), \quad k \in \mathbb{Z}_{\geq 0},
\end{equation*}
and $(z ; q)_{\infty}\coloneqq \prod_{i=0}^{\infty}\left(1-z q^i\right)$ is a convergent infinite product because $q\in(0,1)$.
The last ratio in \eqref{eq:Upsilon_1_T} is the 
$q$-binomial coefficient since $\binom{n}{k}_q = \frac{(q;q)_n}{(q;q)_k(q;q)_{n-k}}$.}
\begin{equation}
	\label{eq:Upsilon_1_T}
	\Upsilon_1^{(T)}(x,y)=\mathbf{1}_{0\le y\le x}\ssp 
	(1-q^x)(1-q^{x-1})\ldots(1-q^{y+1}) 
	\ssp
	q^{y(T-x+y)}
	\ssp
	\frac{(q;q)_T}{(q;q)_{x-y}(q;q)_{T-x+y}}.
\end{equation}
Indeed, 
$(1-q^x)(1-q^{x-1})\ldots(1-q^{y+1}) 
\ssp
q^{y(T-x+y)}$ is the probability that the
walk first goes all the way down from $x$ to $y$ and then stays at $y$, 
and the $q$-binomial coefficient 
$\frac{(q;q)_T}{(q;q)_{x-y}(q;q)_{T-x+y}}$ comes from the $q$-exchangeability.

By \cite{KMG59-Coincidence}, the $T$-step transition probability of an $m$-particle
independent $q$-exchangeable random walk $\Upsilon_{m,\mathrm{ind}}$ conditioned on the event that the particles do not collide 
over these $T$ steps is equal to 
$\det[\Upsilon_1^{(T)}(x_i,y_j)]_{i,j=1}^m$,
where $\vec x,\vec y\in \mathbb{W}_m$.
Therefore, the one-step transition probability from $\vec x$ to $\vec y$
of $\Upsilon_{m,\mathrm{ind}}$ conditioned to not collide up to time $T$ 
and to get absorbed at $\delta_m$ has the form
\begin{equation*}
	\frac{\det[\Upsilon_1^{(T-1)}(y_i,m-j)]_{i,j=1}^m}{\det[\Upsilon_1^{(T)}(x_i,m-j)]_{i,j=1}^m}
	\ssp
	\Upsilon_{m,\mathrm{ind}}(\vec x,\vec y)
	\ssp
	\mathbf{1}_{y_i-x_i \in \left\{ 0, -1 \right\}\textnormal{ for all $i$}}.
\end{equation*}

\begin{proposition}
	\label{prop:never_to_collide_q}
	For any $\vec x,\vec y\in \mathbb{W}_m$, we have
	\begin{equation}
		\label{eq:never_to_collide_limit}
		\lim_{T\to+\infty}
		\frac{\det[\Upsilon_1^{(T-1)}(y_i,m-j)]_{i,j=1}^m}{\det[\Upsilon_1^{(T)}(x_i,m-j)]_{i,j=1}^m}
		=
		q^{-\binom m2+(m-1)\left( |\vec x|-|\vec y| \right)}
		\prod_{1\le i<j\le m}\frac{q^{y_j}-q^{y_i}}{q^{x_j}-q^{x_i}}.
	\end{equation}
\end{proposition}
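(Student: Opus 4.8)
The plan is to extract the leading asymptotics of each determinant as $T\to+\infty$ by a careful analysis of the explicit formula \eqref{eq:Upsilon_1_T}, and then take the ratio. Since every particle must reach its absorbing position $m-j$, for fixed $\vec x$ and large $T$ the dominant contribution to $\Upsilon_1^{(T)}(x_i,m-j)$ comes from the $q$-binomial coefficient factor together with the power $q^{(m-j)(T-x_i+m-j)}$. First I would substitute $y=m-j$ into \eqref{eq:Upsilon_1_T} and write
\begin{equation*}
	\Upsilon_1^{(T)}(x_i,m-j)=
	\mathbf{1}_{0\le m-j\le x_i}\,
	\frac{(q^{m-j+1};q)_{x_i-m+j}\,(q;q)_T}
	{(q;q)_{x_i-m+j}\,(q;q)_{T-x_i+m-j}}\,
	q^{(m-j)(T-x_i+m-j)}.
\end{equation*}
As $T\to+\infty$ with everything else fixed, both $(q;q)_T\to (q;q)_\infty$ and $(q;q)_{T-x_i+m-j}\to(q;q)_\infty$, so those two Pochhammer factors cancel in the limit. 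Hence $\Upsilon_1^{(T)}(x_i,m-j)$ behaves like $q^{(m-j)(T-x_i+m-j)}$ times a $T$-independent constant $c(x_i,j)\coloneqq \mathbf{1}_{m-j\le x_i}\,(q^{m-j+1};q)_{x_i-m+j}/(q;q)_{x_i-m+j}$.

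Next I would feed this into the determinants. Write $q^{(m-j)(T-x_i+m-j)}=q^{(m-j)T}\cdot q^{(m-j)(m-j)}\cdot q^{-(m-j)x_i}$. The factors $q^{(m-j)T}$ and $q^{(m-j)(m-j)}$ depend only on the column index $j$, so they pull out of the determinant as an overall (row-independent) multiplicative constant; crucially the \emph{same} constant appears in numerator and denominator of \eqref{eq:never_to_collide_limit} up to the shift $T\mapsto T-1$, which contributes $\prod_{j=1}^m q^{-(m-j)}=q^{-\binom m2}$. The remaining $i$-dependence is through $q^{-(m-j)x_i}=(q^{-x_i})^{m-j}$, which makes the determinant a Vandermonde-type object: $\det[c(x_i,j)(q^{-x_i})^{m-j}]_{i,j=1}^m$. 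Pulling the column-dependent pieces out and using the Vandermonde determinant in the variables $q^{-x_i}$ — or equivalently noting $\det[(q^{-x_i})^{m-j}]=\prod_{i<j}(q^{-x_i}-q^{-x_j})=q^{-(m-1)|\vec x|}\prod_{i<j}(q^{x_j}-q^{x_i})$ up to sign — I would obtain that $\det[\Upsilon_1^{(T)}(x_i,m-j)]$ equals, asymptotically, a universal ($\vec x$-independent) prefactor times $q^{-(m-1)|\vec x|}\prod_{1\le i<j\le m}(q^{x_j}-q^{x_i})$ times $\prod_j c(\cdot,j)$-type corrections. One must check that the constants $c(x_i,j)$ do not spoil this: because $c$ depends on $x_i$ only through $x_i-m+j$, I expect the product $\prod_i c(x_i,\sigma(i))$ over a permutation to reorganize cleanly, and the ratio of numerator to denominator determinant to collapse to exactly $h_m(\vec y)/h_m(\vec x)$ with $h_m$ as in \eqref{eq:q_Vandermonde}, up to the powers of $q$ coming from the $T\mapsto T-1$ shift and from the $(m-j)$-exponents. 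Assembling: numerator/denominator $=q^{-\binom m2}\cdot q^{(m-1)(|\vec x|-|\vec y|)}\cdot \prod_{i<j}(q^{y_j}-q^{y_i})/\prod_{i<j}(q^{x_j}-q^{x_i})$, which is the claimed right-hand side.

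The main obstacle I anticipate is controlling the constants $c(x_i,j)$ — the finite-$q$ Pochhammer prefactors — and verifying that they do \emph{not} contribute to the final ratio, i.e.\ that they are absorbed into column-only factors that cancel between numerator and denominator. A clean way around this is to avoid tracking $c$ altogether: instead of the entrywise asymptotics above, I would argue at the level of the whole determinant. Since the limiting ratio is claimed to factor as $h_m(\vec y)/h_m(\vec x)$ times an explicit power of $q$, it suffices to show that $\lim_{T\to\infty} q^{-(m-j)T}\cdots \det[\Upsilon_1^{(T)}(x_i,m-j)]$ equals $(\text{const depending only on }m,q)\cdot h_m(\vec x)$. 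Because $\Upsilon_m$ from \eqref{eq:intro_Upsilon} is already known (from \cite{petrov2022noncolliding}, via \eqref{eq:h_transform_for_q}) to be a bona fide Markov chain, and because the Karlin--McGregor formula guarantees the left-hand side of \eqref{eq:never_to_collide_limit} is a ratio of the form $g(\vec y)/g(\vec x)$ in the limit for \emph{some} function $g$, the only thing to pin down is that $g$ coincides with $h_m$; this can be checked by evaluating at a single convenient configuration or by matching the one-step decay, which is a short computation. Either route — direct Vandermonde extraction, or the soft identification of $g$ with $h_m$ — gives the result; I would present the direct computation as primary and use the structural remark as a cross-check.
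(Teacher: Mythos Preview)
Your overall plan matches the paper's: factor out the column-dependent powers $q^{(m-j)(T+m-j)}$, pass to the limit $T\to\infty$ entrywise, and then evaluate the resulting $m\times m$ determinant as a Vandermonde in $q^{-x_i}$. The $q^{-\binom m2}$ from the shift $T\mapsto T-1$ is correct.

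The gap is precisely where you flag it. Your hope that ``the product $\prod_i c(x_i,\sigma(i))$ reorganizes cleanly'' is not an argument, and the observation that $c$ depends on $x_i$ only through $x_i-m+j$ does not help. What you are missing is the one-line identity
\[
c(x_i,j)\,(q^{-x_i})^{m-j}
=\frac{(q^{m-j+1};q)_{x_i-m+j}}{(q;q)_{x_i-m+j}}\,q^{-x_i(m-j)}
=\binom{x_i}{m-j}_q q^{-x_i(m-j)}
=\frac{1}{(q;q)_{m-j}}\prod_{l=0}^{m-j-1}\bigl(q^{-x_i}-q^{-l}\bigr),
\]
which shows that the $(i,j)$ entry is a polynomial in $q^{-x_i}$ of degree exactly $m-j$, with leading coefficient $1/(q;q)_{m-j}$ depending only on $j$. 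Column operations then reduce the determinant to $\bigl(\prod_{j}(q;q)_{m-j}^{-1}\bigr)\det[(q^{-x_i})^{m-j}]$, i.e.\ a constant times the Vandermonde $\prod_{i<j}(q^{-x_i}-q^{-x_j})=(-1)^{\binom m2}q^{-(m-1)|\vec x|}\prod_{i<j}(q^{x_j}-q^{x_i})$. This is exactly how the paper proceeds (after the cosmetic reindexing $j\mapsto m+1-j$), and once you have it the ratio collapses immediately. The indicator $\mathbf{1}_{m-j\le x_i}$ is automatically enforced by the product $\prod_l(q^{-x_i}-q^{-l})$ vanishing when $x_i<m-j$, so it needs no separate treatment.

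Your fallback ``soft'' route is circular: knowing that the ratio has the form $g(\vec y)/g(\vec x)$ for \emph{some} $g$ is not given to you for free by Karlin--McGregor, and identifying $g$ with $h_m$ by ``checking at a single configuration'' presupposes the very computation you are trying to avoid. Drop it and just insert the polynomial identity above; then your proof is complete and coincides with the paper's.
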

\begin{proof}
	Using \eqref{eq:Upsilon_1_T} and factoring out $q^{\sum_{j=1}^m(m-j)(T-1+m-j)}$ and $q^{\sum_{j=1}^m(m-j)(T+m-j)}$, respectively, from the numerator and the denominator in the right-hand side \eqref{eq:never_to_collide_limit} yields the factor $q^{-\binom{m}{2}}$ in the left-hand side. After this, we can pass to the limit as $T\to\infty$ in the matrix elements of each determinant because the resulting matrices stay nondegenerate. Thus, it remains to compute one such determinant with $T=\infty$, say (after replacing the index $j$ with $m+1-j$):
	\begin{equation}
		\label{eq:never_to_collide_limit_proof}
		\det\Bigl[ 
			\mathbf{1}_{0\le j-1\le x_i}\ssp 
			\frac{
			(1-q^{x_i})(1-q^{x_i-1})\ldots(1-q^{j}) 
			\ssp
			q^{-x_i(j-1)}}{(q;q)_{x_i-j+1}}
		\Bigr]_{i,j=1}^{m}
		=
		\det\Bigl[ 
			\frac{
			q^{-x_i(j-1)}(q^{x_i-j+2};q)_{j-1}}{(q;q)_{j-1}}
		\Bigr]_{i,j=1}^m.
	\end{equation}
	Here we rewrote the products in a convenient form, and observed that the 
	indicator 
	$\mathbf{1}_{0\le j-1\le x_i}$
	is automatically enforced in the right-hand side by the $q$-Pochhammer
	$(q^{x_i-m+j+1};q)_{j-1}$.

	We see that each $(i,j)$-th entry in the determinant in the right-hand side of
	\eqref{eq:never_to_collide_limit_proof} is a polynomial in $q^{-x_i}$ of degree $j-1$.
	Therefore, the whole determinant is proportional to the Vandermonde 
	$\prod_{1\le i<j\le m}(q^{x_j}-q^{x_i})$. 
	One readily sees that the coefficient by this Vandermonde
	is equal to 
	$\frac{(-1)^{m(m-1)/2}q^{-(m-1)|\vec x|}}{(q;q)_1(q;q)_2\ldots(q;q)_{m-1}}$,
	which completes the proof.
\end{proof}

The limit relation in \Cref{prop:never_to_collide_q}
completes the analogy between the 
well-known model of noncolliding simple random walks on $\mathbb{Z}$
(and the Dyson Brownian motion) and our noncolliding $q$-exchangeable random
walks $\Upsilon_m$.
In both cases, the $h$-transform structure of the 
transition probability is due to the conditioning that the independent random
walks never collide.

\subsection{Gibbs interpretation as $q$-weighted lozenge tilings}
\label{sub:gibbs_q_lozenge_partition_function}

The $m$-particle process $\Upsilon_m$ satisfies a 
version of the $q$-exchangeability 
discussed for a single random walk in \Cref{sub:intro_single_walk}. 
Namely, this is the Gibbs property of the walk observed in
\cite{petrov2022noncolliding}.

Fix $m$ and an initial condition $\vec x\in \mathbb{W}_{m}$
for the process $\Upsilon_m$. Under a suitable affine
transformation of the trajectory of $\Upsilon_m$, it can be
bijectively identified with a lozenge tiling of the vertical
strip of width $x_1+1$, see \Cref{fig:walks_intro}, right.
The bottom boundary of the vertical strip is encoded by
$\vec{x}$ in the following way. Viewing $\vec{x}$ as a
particle configuration in $\mathbb{Z}_{\ge0}$, each particle
$x_i$ corresponds to a straight piece in the boundary of
slope $(-1/\sqrt3)$, and each hole in $\vec x$ corresponds
to cutting a small triangle out of the strip. Due to the
eventual absorption of the walk $\Upsilon_m$ at $\delta_m$,
the lozenge tiling is ``frozen'' far at the top, with
$x_1+1-m$ tiles of one type on the left followed by $m$
tiles of the other type. Thus, each lozenge tiling 
corresponding to a trajectory of $\Upsilon_m$ contains only finitely many horizontal lozenges.

\begin{figure}[htpb]
	\centering
	\includegraphics[width=.9\textwidth]{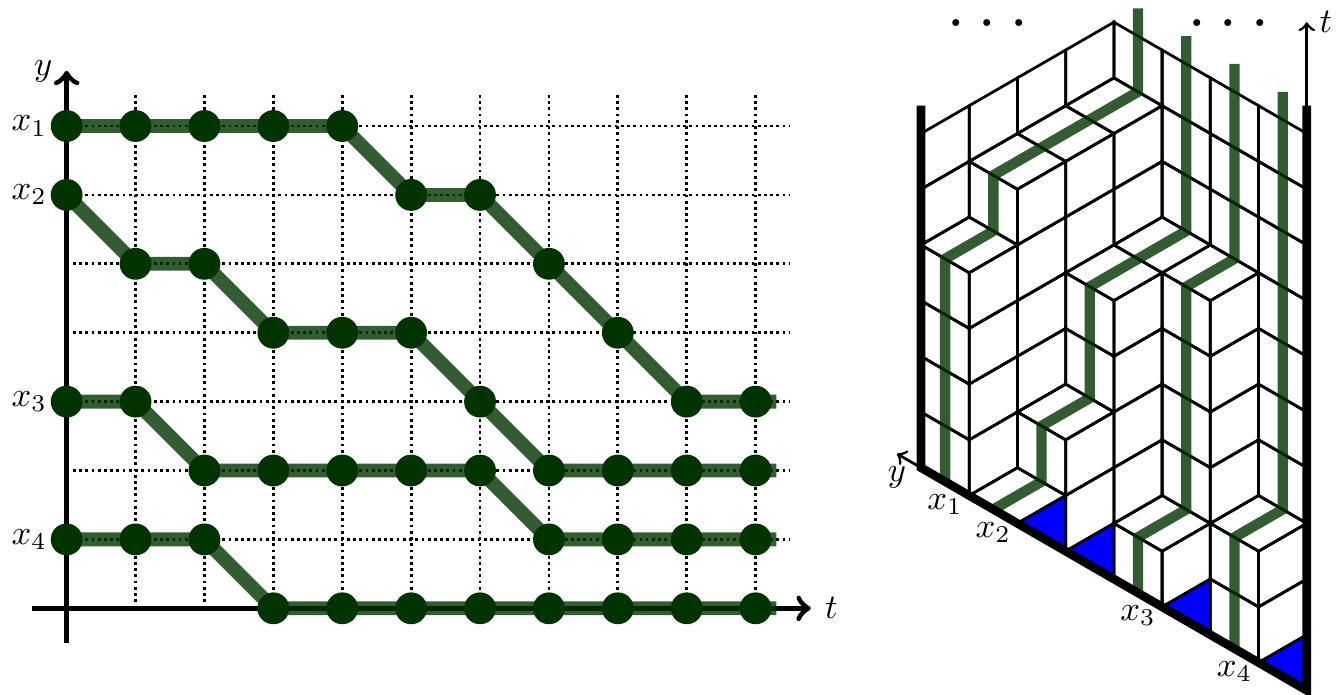}
	\caption{
		Left:
		Illustration of the trajectory
		of the noncolliding $q$-exchangeable random walks
		$\Upsilon_m$ (with $m=4$) started from $\vec x=(7,6,3,1)$.
		Right: A bijective interpretation of the trajectory
		as a lozenge tiling of a strip via an affine transformation.}
	\label{fig:walks_intro}
\end{figure}

The lozenge tiling corresponding to a trajectory of $\Upsilon_m$ 
can be interpreted as a \emph{stepped surface} in three dimensions
such that the solid under this surface is made
out of $1\times 1\times 1$ boxes. 
Via this interpretation, each trajectory of $\Upsilon_m$
has a well-defined \emph{volume} under the corresponding stepped surface.
In other words, the volume of a given tiling is the number of boxes which must be added to the 
minimal configuration to get this tiling. For example, 
the volume of the tiling in \Cref{fig:walks_intro}, right, is equal to~34.

The next statement follows from \cite[Proposition 10]{petrov2022noncolliding}.

\begin{proposition}
	\label{prop:q_vol_Upsilon_identification}
	Fix $m$ and $\vec x\in \mathbb{W}_m$.
	The probability distribution of the trajectory of the Markov process
	$\Upsilon_m$ \eqref{eq:intro_Upsilon} started from $\vec x$ 
	is the same as the distribution of the random lozenge tiling
	of the strip as in \Cref{fig:walks_intro}, right (depending on $\vec x$), 
	where the probability weight of a tiling is proportional
	to $q^{\mathrm{volume}}$.
\end{proposition}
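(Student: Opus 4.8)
The plan is to compute the probability of an individual trajectory of $\Upsilon_m$ directly from \eqref{eq:intro_Upsilon}, and to match it, up to a normalizing constant depending only on $\vec x$ and $m$, with $q^{\mathrm{volume}}$ of the associated tiling. Since $\Upsilon_m$ is almost surely absorbed at $\delta_m$, a trajectory $\gamma=(\vec x=\vec x(0),\vec x(1),\dots)$ is determined by its initial segment up to the absorption time $T_0=T_0(\gamma)$, and such trajectories are in bijection with the lozenge tilings of \Cref{fig:walks_intro}, right, that have finitely many horizontal lozenges. As both the trajectory law of $\Upsilon_m$ and the $q^{\mathrm{volume}}$ measure are probability measures on this common set, it suffices to prove that they are proportional.

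First I would rewrite \eqref{eq:intro_Upsilon} as $\Upsilon_m(\vec x,\vec y)=q^{-\binom m2}\,h_m(\vec y)\,h_m(\vec x)^{-1}\,\Upsilon_{m,\mathrm{ind}}(\vec x,\vec y)$ with $h_m$ from \eqref{eq:q_Vandermonde}, and telescope the one-step probabilities along $\gamma$: the Doob ratios collapse to $h_m(\delta_m)/h_m(\vec x)$, yielding
\[
\mathop{\mathrm{Prob}}(\gamma)=q^{-T_0\binom m2}\,\frac{h_m(\delta_m)}{h_m(\vec x)}\prod_{i=1}^m \mathbb{P}_i(\gamma_i),
\]
where $\gamma_i$ is the $i$-th coordinate run for $T_0$ steps from $x_i$ to $m-i$, and $\mathbb{P}_i$ is the law of a single $q$-exchangeable walk (\Cref{def:q_exch_walk}). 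I would then invoke $q$-exchangeability of the single walk: among length-$T_0$ trajectories with fixed endpoints, swapping a neighboring pair $(\mathrm{down},\mathrm{straight})\to(\mathrm{straight},\mathrm{down})$ multiplies the probability by $q$ and raises by $1$ the statistic $A_i:=\sum_{t=0}^{T_0-1} x_i(t)\,\mathbf 1_{x_i(t+1)=x_i(t)}$, the sum of the heights at which the walk pauses. Hence $\mathbb{P}_i(\gamma_i)=\big((q;q)_{x_i}/(q;q)_{m-i}\big)\,q^{A_i(\gamma_i)}$, and since particle $i$ always makes exactly $x_i-(m-i)$ down-steps, $\sum_i A_i(\gamma_i)=\sum_{t=0}^{T_0-1}|\vec x(t)|-B(\vec x,m)$ with $B$ independent of $\gamma$.

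Putting the pieces together gives $-T_0\binom m2+\sum_i A_i(\gamma_i)=\sum_{t\ge0}\big(|\vec x(t)|-\binom m2\big)-B(\vec x,m)$, so $\mathop{\mathrm{Prob}}(\gamma)$ equals a $\gamma$-independent constant times $q^{\sum_{t\ge0}(|\vec x(t)|-\binom m2)}$. The final step is to recognize $\sum_{t\ge0}(|\vec x(t)|-\binom m2)$ as $\mathrm{volume}(\gamma)$ up to an additive constant: under the bijection of \Cref{fig:walks_intro}, the slice of the stepped surface at level $t$ carries the partition whose particle encoding is $\vec x(t)$, hence has box count $|\vec x(t)|-\binom m2$; summing over $t$ and subtracting the value along the minimal (all-particles-rush-down) trajectory produces exactly the volume. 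Proportionality together with normalization then identifies the two measures. I note that this statement is also \cite[Proposition 10]{petrov2022noncolliding} transported through the affine change of coordinates of \Cref{fig:walks_intro}, so one may alternatively just cite it.

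I expect the main obstacle to be the bookkeeping in these middle steps: one must track simultaneously the per-step factor $q^{-T_0\binom m2}$, the telescoped Doob ratio $h_m(\delta_m)/h_m(\vec x)$, and the single-particle constants $(q;q)_{x_i}/(q;q)_{m-i}$, and then verify that the remaining exponent is $\mathrm{volume}(\gamma)$ plus a constant. The delicate point is that $T_0$ itself depends on $\gamma$, and one has to check that the factor $q^{-T_0\binom m2}$ exactly cancels the spurious contribution of the particles that have reached the bottom but keep pausing there while the slower particles finish descending.
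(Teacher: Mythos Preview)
Your proposal is correct. The paper itself does not give a proof at all: it simply asserts that the statement ``follows from \cite[Proposition 10]{petrov2022noncolliding}'', which you also note as an alternative at the end of your write-up. So your direct computation---telescoping the Doob ratio, factoring the independent-walk probability as $\prod_i\bigl((q;q)_{x_i}/(q;q)_{m-i}\bigr)q^{A_i}$, and identifying $\sum_t\bigl(|\vec x(t)|-\binom m2\bigr)$ with volume up to a constant---is genuinely more than what the paper does. The worry you flag about $T_0$ depending on $\gamma$ is harmless for exactly the reason implicit in your exponent rewrite: once the walk is absorbed at $\delta_m$ one has $|\vec x(t)|-\binom m2=0$, so the finite sum $\sum_{t=0}^{T_0-1}$ equals the infinite sum $\sum_{t\ge0}$, and the apparent $T_0$-dependence disappears. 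What your approach buys is a self-contained argument inside the present paper's notation, at the cost of the bookkeeping you describe; the paper's approach simply defers to the earlier reference.
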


\begin{remark}
	\label{rmk:compare_with_BG2011}
	Another $q$-dependent model
	of noncolliding random walks was 
	introduced and studied in
	\cite{BG2011non}. 
	In that model, the parameter $q$ 
	enters the particle speeds $1,q^{-1},q^{-2},\ldots $,
	but the dynamics as a whole satisfies the usual, undeformed
	exchangeability property. Indeed,
	the single-particle dynamics in \cite{BG2011non} is the 
	simple random walk (with Poisson, Bernoulli, or geometric jumps),
	and our single-particle dynamics is the $q$-exchangeable random walk.
\end{remark}

Let us denote the $q^{\mathrm{volume}}$-weighted probability measure on tilings
described in \Cref{prop:q_vol_Upsilon_identification} by $\mathscr{M}_m^{(\vec x)}$.
The partition function (that is, the probability normalizing constant)
of $\mathscr{M}_m^{(\vec x)}$
has an explicit form:

\begin{proposition}
	\label{prop:q_vol_Upsilon_partition_function}
	The sum of the quantities $q^{\mathrm{volume}}$ over all 
	lozenge tilings of the strip as in \Cref{fig:walks_intro}, right
	(determined by $\vec x\in \mathbb{W}_m$), is equal to 
	\begin{equation}
		\label{eq:Z_m_partition_function}
		Z[ \mathscr{M}_m^{(\vec x)} ]=
		\prod_{i=1}^{m}\frac{1}{(q;q)_{x_i}}
		\prod_{1\le i<j\le m}(1-q^{x_i-x_j}).
	\end{equation}
\end{proposition}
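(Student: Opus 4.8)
The plan is to use \Cref{prop:q_vol_Upsilon_identification}. Since the trajectory of $\Upsilon_m$ started from $\vec x$ is distributed according to $\mathscr{M}_m^{(\vec x)}$, which assigns to a tiling the probability $q^{\mathrm{volume}}/Z[\mathscr{M}_m^{(\vec x)}]$, we have $Z[\mathscr{M}_m^{(\vec x)}]=1/\mathrm{Prob}_{\Upsilon_m}(\tau_0)$, where $\tau_0$ is the unique trajectory whose tiling is the minimal one (of volume $0$). Inspecting the bijection of \Cref{fig:walks_intro}, one sees that $\tau_0$ is the trajectory in which every walk descends by $1$ at each step until it reaches its final position; that is, $x_i^{(t)}=\max(x_i-t,\,m-i)$, so that $\tau_0$ is absorbed at $\delta_m$ at time $N\coloneqq x_1-m+1$. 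It then remains to compute $\mathrm{Prob}_{\Upsilon_m}(\tau_0)=\prod_{t\ge0}\Upsilon_m(\vec x^{(t)},\vec x^{(t+1)})$ and invert it.

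For this I would use the Doob $h$-transform structure of $\Upsilon_m$: comparing \eqref{eq:intro_Upsilon} with \eqref{eq:q_Vandermonde} gives $\Upsilon_m(\vec x,\vec y)=q^{-\binom m2}\,\dfrac{h_m(\vec y)}{h_m(\vec x)}\,\Upsilon_{m,\mathrm{ind}}(\vec x,\vec y)$. Since $\Upsilon_{m,\mathrm{ind}}(\delta_m,\delta_m)=q^{\binom m2}$, every factor with $t\ge N$ equals $1$, and telescoping the $h_m$-ratios over $t=0,\dots,N-1$ yields
\begin{equation*}
	\mathrm{Prob}_{\Upsilon_m}(\tau_0)
	=
	q^{-N\binom m2}\,
	\frac{h_m(\delta_m)}{h_m(\vec x)}
	\prod_{i=1}^{m}\ \prod_{t=0}^{N-1}\Upsilon_1\!\bigl(x_i^{(t)},x_i^{(t+1)}\bigr).
\end{equation*}
The $i$-th inner product splits into the $x_i-(m-i)$ down-steps, contributing $(1-q^{x_i})(1-q^{x_i-1})\cdots(1-q^{m-i+1})=(q;q)_{x_i}/(q;q)_{m-i}$, and the remaining $N-x_i+(m-i)$ steps of staying at level $m-i$, contributing $q^{(m-i)(N-x_i+m-i)}$.

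Finally I would substitute $h_m(\vec x)=q^{-(m-1)|\vec x|+\sum_{i<j}x_j}\prod_{i<j}(1-q^{x_i-x_j})$ and $h_m(\delta_m)=q^{-(m-1)\binom m2+\sum_{i<j}(m-j)}\prod_{i<j}(1-q^{\,j-i})$ — obtained from \eqref{eq:q_Vandermonde} by pulling the lower power of $q$ out of each difference $q^{x_j}-q^{x_i}$ (resp.\ $q^{m-j}-q^{m-i}$) — and simplify. Two elementary identities make the expression collapse to the claimed formula. First, $\prod_{i=1}^m(q;q)_{m-i}=\prod_{k=0}^{m-1}(q;q)_k=\prod_{d=1}^{m-1}(1-q^{d})^{m-d}=\prod_{1\le i<j\le m}(1-q^{\,j-i})$, which cancels the $q$-Pochhammer factors of the descending staircase against those produced by $h_m(\delta_m)$. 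Second, the accumulated exponent of $q$ vanishes: its $\vec x$-dependent part is $\sum_i x_i\bigl[(m-1)-(i-1)-(m-i)\bigr]=0$, and its $\vec x$-independent part reduces to $-(m-1)\binom m2+\sum_{1\le i<j\le m}(m-j)+\tfrac16(m-1)m(2m-1)$, which is $0$ by the standard evaluations of $\sum k$ and $\sum k^2$. This leaves $\mathrm{Prob}_{\Upsilon_m}(\tau_0)=\prod_{i=1}^m(q;q)_{x_i}\big/\prod_{1\le i<j\le m}(1-q^{x_i-x_j})$, and inverting gives \eqref{eq:Z_m_partition_function}. The only genuine subtlety is the first step, namely reading off from the bijection that the all-walks-descend trajectory has volume $0$; the rest is a direct but somewhat delicate bookkeeping of the powers of $q$ coming from $N\binom m2$, the two copies of $h_m$, and the staying steps of the staircase.
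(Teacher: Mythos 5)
Your proposal is correct and follows essentially the same route as the paper: identify $Z[\mathscr{M}_m^{(\vec x)}]$ as the reciprocal of the probability of the fastest (volume-$0$) path to $\delta_m$, telescope the $q$-Vandermonde/$h$-transform factors over the $x_1-m+1$ steps, and verify the resulting product directly. The paper's proof is just a terse version of your computation, so you have simply supplied the bookkeeping it leaves to the reader.
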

Note in particular that setting $q=0$, we get $Z[ \mathscr{M}_m^{(\vec x)} ]= 1$, as it should be.
\begin{proof}[Proof of \Cref{prop:q_vol_Upsilon_partition_function}]
	From \Cref{prop:q_vol_Upsilon_identification},
	it suffices to check that $1/Z[ \mathscr{M}_m^{(\vec x)} ]$ is 
	the transition probability (over $x_1-m+1$ steps) of fastest path 
	from $\vec x$ to the absorbing state $\delta_m$.
	This fact follows by taking the
	product of the one-step transition probabilities 
	\eqref{eq:intro_Upsilon} and 
	observing that the $q$-Vandermonde factors
	cancel out, except for the first such factor 
	coming from the initial condition $\vec x$.
	The resulting expression for 
	$1/Z[ \mathscr{M}_m^{(\vec x)} ]$ is then verified directly.
\end{proof}

\subsection{Determinantal kernel}
\label{sub:det_kernel}

Fix $m\in \mathbb{Z}_{\ge1}$ and an initial configuration
$\vec x\in \mathbb{W}_m$ for the 
noncolliding $q$-exchangeable random walks
$\Upsilon_m$
(\Cref{def:Upsilon_m}).
View the trajectory 
$\vec y(t)$
of the process $\Upsilon_m$ as a 
random point configuration 
$\{y_j(t)\colon  j=1,\ldots,m,\,t\in \mathbb{Z}_{\ge0} \}
\subset
\mathbb{Z}^2_{\ge0}$.
The next statement is our first main result.

\begin{theorem}
	\label{thm:det_kernel_for_walks_intro}
	Thus defined random point configuration in $\mathbb{Z}_{\ge0}^{2}$
	forms a determinantal point process,
	that is, for any $\ell \ge1$ and any
	pairwise distinct points $(u_i,t_i)\in \mathbb{Z}_{\ge0}^2$,
	we have
	\begin{equation}
		\label{eq:det_def_proc_intro}
		\begin{split}
			&\mathrm{Prob}
			\left( \textnormal{the random configuration $\{y_j(t)\colon  1\le j\le m,\,t\ge0\}$ contains all
				$(u_i,t_i)$, $i=1,\ldots,\ell $}
			\right)
			\\&\hspace{280pt}=
			\det\bigl[ K_{\mathrm{walks}}(u_i,t_i;u_j,t_j) \bigr]_{i,j=1}^{\ell},
		\end{split}
	\end{equation}
	where the correlation kernel
	$K_{\mathrm{walks}}$ 
	has double contour integral form:
	\begin{equation}
		\label{eq:K_walks_intro}
		\begin{split}
			&K_{\mathrm{walks}}(y_1,t_1;y_2,t_2)
			\coloneqq
			\mathbf{1}_{t_1=t_2}\mathbf{1}_{y_1=y_2}-
			\mathbf{1}_{t_2>t_1}\mathbf{1}_{y_2+t_2>y_1+t_1}
			\frac{q^{(t_1-t_2)(y_1+t_1)}(q^{y_1-y_2+t_1-t_2+1};q)_{t_2-t_1-1}}{(q;q)_{t_2-t_1-1}}
			\\&\hspace{20pt}
			-\frac{q^{-t_1-y_1}}{(2\pi\mathbf{i})^2}
			\oiint
			dz\ssp dw\ssp
			\frac{z^{-t_2}w^{t_1} }{w-z}
			\frac{(q;q)_{t_1}}{(wq^{-y_1-t_1};q)_{t_1+1}}
			\frac{(zq^{1-y_2-t_2};q)_{t_2-1}}{(q;q)_{t_2-1}}
			\frac{(w^{-1};q)_{\infty}}{(z^{-1};q)_{\infty}}
			\prod_{r=1}^{m}\frac{1-q^{x_r}/z}{1-q^{x_r}/w}.
		\end{split}
	\end{equation}
	Here $y_1,y_2\in \mathbb{Z}$, $t_1\in \mathbb{Z}_{\ge0}$, $t_2\in \mathbb{Z}_{>0}$,
	the $w$ contour is an arbitrarily small 
	circle around $0$, and the $z$ contour goes around $q^{y_2+t_2},q^{y_2+t_2+1},q^{y_2+t_2+2},\ldots $, $0$, the $w$ contour, and 
	encircles
	no other $z$ poles of the integrand.
\end{theorem}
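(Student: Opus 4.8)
\emph{Plan of the proof.} The strategy is to obtain $K_{\mathrm{walks}}$ as an $N\to\infty$ degeneration of the correlation kernel $K_{\mathrm{loz}}$ for $q^{\mathrm{volume}}$-weighted lozenge tilings of bounded sawtooth polygons due to Petrov \cite{Petrov2012} (recalled in \eqref{eq:kernel_from_Pet14} below), realizing the heuristic of \Cref{fig:sawtooth_limits_intro}, right. For each $N\ge1$ let $\mathcal{P}_N$ be the sawtooth polygon whose distinguished multi-defect edge encodes the initial configuration $\vec x\in\mathbb{W}_m$ exactly as the bottom boundary of the strip in \Cref{fig:walks_intro}, right — each particle $x_i$ a straight unit segment, each hole a cut triangle — and whose size in the transversal direction equals $N$. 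Combining \Cref{prop:q_vol_Upsilon_identification} with \Cref{prop:q_vol_Upsilon_partition_function}, one checks that the $q^{\mathrm{volume}}$-weighted tiling measure on $\mathcal{P}_N$, restricted to any fixed finite window adjacent to the $\vec x$-edge, converges as $N\to\infty$ to the law $\mathscr{M}_m^{(\vec x)}$ of the trajectory of $\Upsilon_m$; the complementary part of the polygon contributes only a partition-function factor whose limit exists because $q\in(0,1)$ and the volume is bounded below (equivalently, the walk $\Upsilon_m$ is eventually absorbed at $\delta_m$). In particular the finite-dimensional correlation functions converge, so it suffices to pass to the limit in the determinantal kernel of $\mathcal{P}_N$ provided by \cite{Petrov2012}.

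First I would record the precise specialization of \eqref{eq:kernel_from_Pet14} to $\mathcal{P}_N$, including the substitution matching Petrov's tiling parameter with $q^{-1}$; this is a bookkeeping step identifying the positions of the $m$ defects with $\vec x$, the long transversal side of length $\sim N$, and the level with the time variable $t$. Already at finite $N$ the kernel splits as an explicit combinatorial part — which reproduces the first line of \eqref{eq:K_walks_intro}, i.e.\ the two indicator terms and the ratio of $q$-Pochhammer symbols, essentially unchanged in the limit — plus a double contour integral. In that integral the finite product $\prod_{r=1}^m(1-q^{x_r}/z)/(1-q^{x_r}/w)$ is exactly the contribution of the $m$ defects encoding $\vec x$ and is present already before the limit, whereas the factors $z^{-t_2}w^{t_1}$, the ratios involving $(q;q)_{t_i}$, and — crucially — the infinite product $(w^{-1};q)_\infty/(z^{-1};q)_\infty$ are what emerges from the long side of $\mathcal{P}_N$ as $N\to\infty$ in combination with the $q$-hypergeometric factor in $K_{\mathrm{loz}}$.

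The analytic heart is the $N\to\infty$ limit of this double integral, which I would organize into three sub-steps: (i) showing that the ${}_2\phi_1$-type $q$-hypergeometric factor in $K_{\mathrm{loz}}$, with the arguments dictated by $\mathcal{P}_N$, converges uniformly on the relevant contours to the explicit ratio of $q$-Pochhammer symbols in \eqref{eq:K_walks_intro} — a $q$-confluence of Gauss type in which one parameter is sent to $0$ (or $\infty$) along $N\to\infty$ while $q$ is held fixed; (ii) tracking the $z$-poles: the $m$ poles at $q^{x_r}$ and the poles accumulating along $q^{y_2+t_2},q^{y_2+t_2+1},\dots$ are stable in $N$, while the poles produced by the growing side escape, so that for all large $N$ the $z$-contour may be chosen as in the statement — a small circle around $0$, the $w$-contour, and the geometric sequence $q^{y_2+t_2+k}$, $k\ge0$, with no other poles of the integrand enclosed; (iii) a dominated-convergence estimate on the compact $z,w$-contours justifying interchange of the limit and the double integral. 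Steps (i)--(iii) together identify the limiting integral with the one in \eqref{eq:K_walks_intro}.

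Finally, since the kernels of $\mathcal{P}_N$ converge pointwise on the finite point set entering any prescribed correlation function, and the tiling measures on $\mathcal{P}_N$ converge to $\mathscr{M}_m^{(\vec x)}$, the correlation functions of the $\Upsilon_m$-trajectory equal $\det[K_{\mathrm{walks}}]$, which proves \Cref{thm:det_kernel_for_walks_intro}; independence of the answer from the admissible choices of the $z,w$-contours is built into the construction. I expect the main obstacle to be step (i) — pinning down the exact confluence of Petrov's $q$-hypergeometric factor and controlling it uniformly on the contours — together with the pole/contour bookkeeping of step (ii); by contrast, the convergence of the tiling measures is routine given the closed-form partition function \eqref{eq:Z_m_partition_function}.
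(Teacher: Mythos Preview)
Your overall strategy---pass to the $N\to\infty$ limit in Petrov's kernel $K_{\mathrm{loz}}$ for $q$-weighted sawtooth tilings---is exactly the paper's. But there are two genuine gaps in your outline that would make the argument fail as written.

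\medskip

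\textbf{1. The contour exchange must precede the ${}_2\phi_1$ limit, and it produces a residue.} In $K_{\mathrm{loz}}$ \eqref{eq:kernel_from_Pet14} the $w$ contour is \emph{large}, encircling the $z$ contour, while the ${}_2\phi_1(q^{-1},q^{N-t_1-1};q^{N-1}\mid q^{-1};w^{-1}q^{p_1})$ is a Laurent polynomial in $w^{-1}$ of degree $N-t_1-1\sim N$. On a contour where $|w|$ is bounded away from $0$ this expression does \emph{not} converge as $N\to\infty$; your step (i) therefore cannot be carried out ``on the relevant contours'' as they stand. The paper first moves the $w$ contour \emph{inside} the $z$ contour so that $|w|$ may be taken arbitrarily small; only then does the termwise limit of the ${}_2\phi_1$ series (after reversing the summation index) converge uniformly and sum via the $q$-binomial theorem to $(wq^{-p_1})^{t_1+1}(q;q)_{t_1}/(wq^{-p_1};q)_{t_1+1}$. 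Crucially, this exchange picks up $-\mathop{\mathrm{Res}}_{z=w}$, and the paper computes (\Cref{lemma:K_rewrite_residue}) that this residue exactly equals the original additive term of $K_{\mathrm{loz}}$ but with the indicator $\mathbf{1}_{p_2\le p_1}$ removed---so the net effect is to \emph{flip} that indicator to $\mathbf{1}_{p_2>p_1}$. Your claim that the combinatorial part is ``essentially unchanged in the limit'' is therefore incorrect: the indicator condition in the second term of \eqref{eq:K_walks_intro} is produced by this residue.

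\medskip

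\textbf{2. The particle-hole complementation is missing.} Under the encoding \eqref{eq:lambda_mu_via_x_y}, the positions of the vertical lozenges $p_i^n$ are the \emph{holes} of the walk configuration, not the walk positions themselves. The limiting kernel $K_{\mathrm{loz}}^{\mathrm{lim}}$ you obtain is the kernel for the complementary process; to reach $K_{\mathrm{walks}}$ one must apply Kerov's complementation $K_{\mathrm{walks}}=\mathbf{1}-(\text{gauge})\cdot K_{\mathrm{loz}}^{\mathrm{lim}}$ (together with the coordinate shift $p_i=y_i+t_i$). This is the sole source of the diagonal term $\mathbf{1}_{t_1=t_2}\mathbf{1}_{y_1=y_2}$ in \eqref{eq:K_walks_intro}, which your outline attributes to the pre-limit additive part of $K_{\mathrm{loz}}$---but that part has no such diagonal piece.
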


We prove \Cref{thm:det_kernel_for_walks_intro}
in \Cref{sec:limit_in_kernel} below after relating (in \Cref{sec:from_tilings_to_walks}) the 
process $\Upsilon_m$ of noncolliding $q$-exchangeable random 
walks to a $q$-weighted distribution on lozenge tilings of a sawtooth polygons.
The determinantal kernel for the latter is known from \cite{Petrov2012}.

\subsection{Asymptotic results}
\label{sub:intro_asymptotics}

Recall the definition of a determinantal kernel which
should appear in the bulk of our noncolliding $q$-exchangeable random
walks as the number of walks and the time go to infinity:

\begin{definition}
	\label{def:incomplete_beta_kernel}
	Let $\omega\in\mathbb{C}\setminus\{0,1\}$, $\Im\omega\ge0$, be a parameter called the \emph{complex slope}. The \emph{incomplete beta kernel} is defined as
  \begin{equation}
		\label{eq:incomplete_beta_formula}
		\mathsf{B}_{\omega}(\Delta t,\Delta p):=
    \frac{1}{2\pi\i}
    \int_{\overline\omega}^{\omega}
    (1-u)^{\Delta t}u^{-\Delta p-1}du,
    \qquad
		\Delta t,\Delta p\in\mathbb{Z},
  \end{equation}
	where the integration arc from $\overline\omega$ to $\omega$
	crosses $(0,1)$ for $\Delta t\ge0$ and
	$(-\infty,0)$ for $\Delta t<0$.
\end{definition}

The kernel \eqref{eq:incomplete_beta_formula} was
introduced in
\cite{okounkov2003correlation} to describe local
asymptotics of a certain ensemble 
of $q$-distributed
random lozenge tilings of the whole plane (equivalent to 
random plane partitions).
Moreover, the incomplete beta kernel is the 
universal bulk scaling limit
of uniformly random lozenge tilings of bounded shapes
\cite{aggarwal2019universality}.
By
\cite{Sheffield2008}, \cite{KOS2006}, for every complex
slope $\omega$, there is a unique ergodic translation
invariant Gibbs measure on lozenge tilings of the whole plane, and
its determinantal correlation kernel is
$\mathsf{B}_\omega$.

\medskip

Let us now describe the asymptotic regime of our random walks.
Let $m\to +\infty$, and set $q=e^{-\gamma/m}$ for fixed $\gamma>0$.
Scale the time and the space variables in 
the random walk (as in \Cref{fig:walks_intro}, left)
proportionally to $m$: $t=\lfloor \tau m \rfloor $,
$y=\lfloor \rho m \rfloor $, where $\tau,\rho\in \mathbb{R}_{\ge0}$ are fixed.
Let the initial condition $\vec x \in \mathbb{W}_m$ 
form a fixed number $L\ge 1$ of densely packed clusters:
\begin{equation}
	\label{eq:densely_packed_s2}
	x_i=i+\lfloor m\ssp C_k \rfloor \quad \textnormal{for $m\ssp a_k\le i< m\ssp a_{k+1}$},
\end{equation}
where $i=1,\ldots,m$, $k=1,\ldots,L $, and
$0<C_1<C_2<\ldots<C_L $ and $0=a_1<a_2<\ldots<a_{L+1}=1 $
are the fixed parameters of the clusters. See
the beginning of \Cref{sub:setup_asymptotics} for more detail.

In the $(\tau,\rho)$ plane, 
let $\partial\mathcal{D}$ be the curve
with the following rational parametrization in the 
exponential coordinates 
$(e^{\gamma\tau},e^{\gamma\rho})$:
\begin{equation*}
	e^{\gamma\tau(w)}=
	\frac{(w F(w))'-e^{-\gamma}}
	{w F'(w)-F(w)+e^{\gamma } F^2(w)}
	,\qquad 
	e^{\gamma\rho(w)}=
	\frac{e^{\gamma } F'(w)}{e^{\gamma } (w F(w))'-1},
	\qquad w\in \mathbb{R},
\end{equation*}
where 
\begin{equation*}
	F(w)\coloneqq
	\frac{w}{w-1}
	\prod_{i=1}^{L}\frac{w \ssp e^{\gamma(a_i+C_i)}-1}{w\ssp e^{\gamma(a_{i+1}+C_i)}-1}
	.
\end{equation*}
This curve bounds a domain denoted by $\mathcal{D}$
such that $\mathcal{D}\cap ([0,\tau]\times \mathbb{R}_{\ge0})$
is bounded for any $\tau>0$.
We call $\partial\mathcal{D}$ the \emph{frozen boundary curve}, 
and $\mathcal{D}$ the \emph{liquid region}.
See 
\Cref{fig:frozen_boundaries} for examples.

\begin{figure}[htpb]
	\centering
	\includegraphics[width=.32\textwidth,trim={0 0 2cm 0},clip]{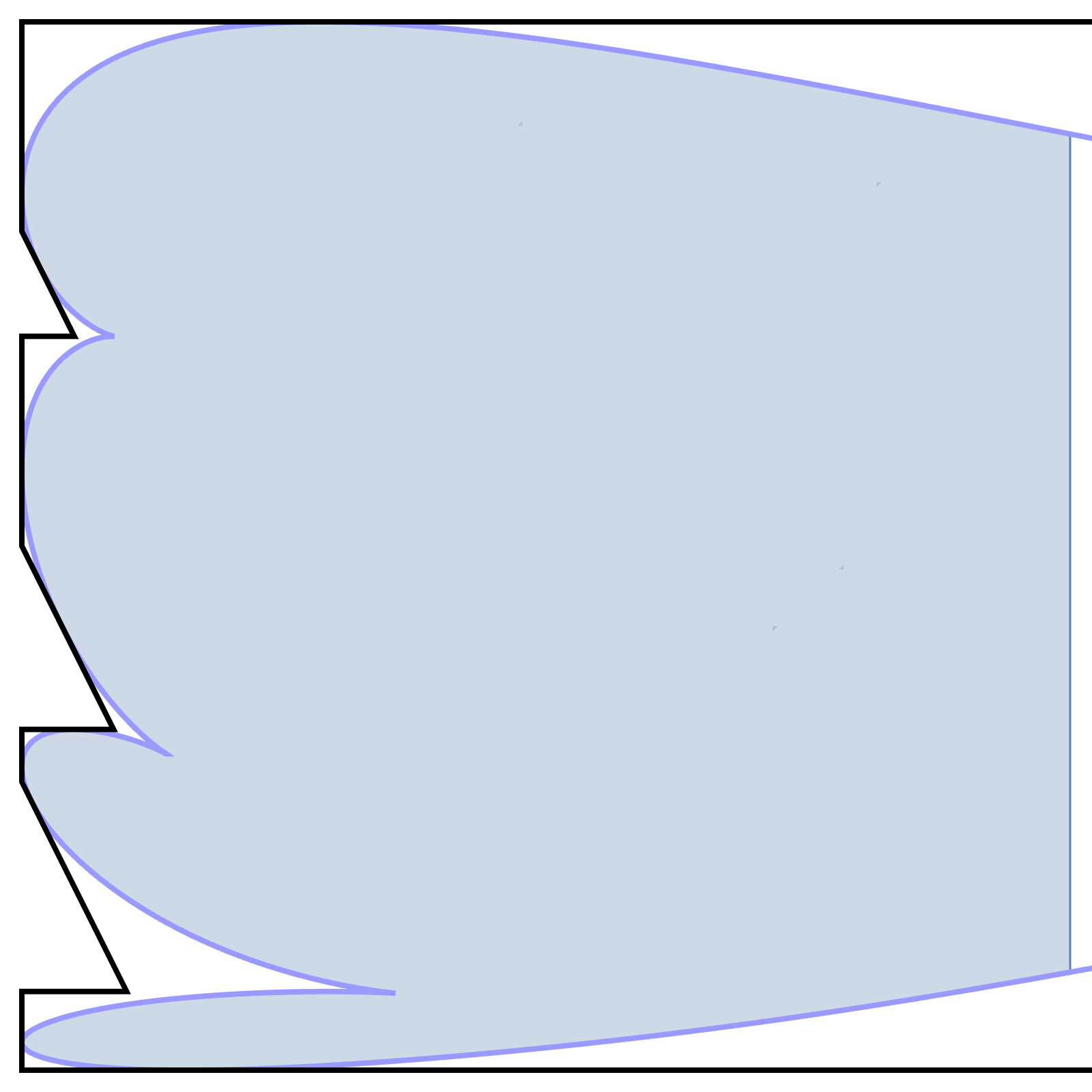}\hspace{.01\textwidth}
	\includegraphics[width=.32\textwidth,trim={0 0 2cm 0},clip]{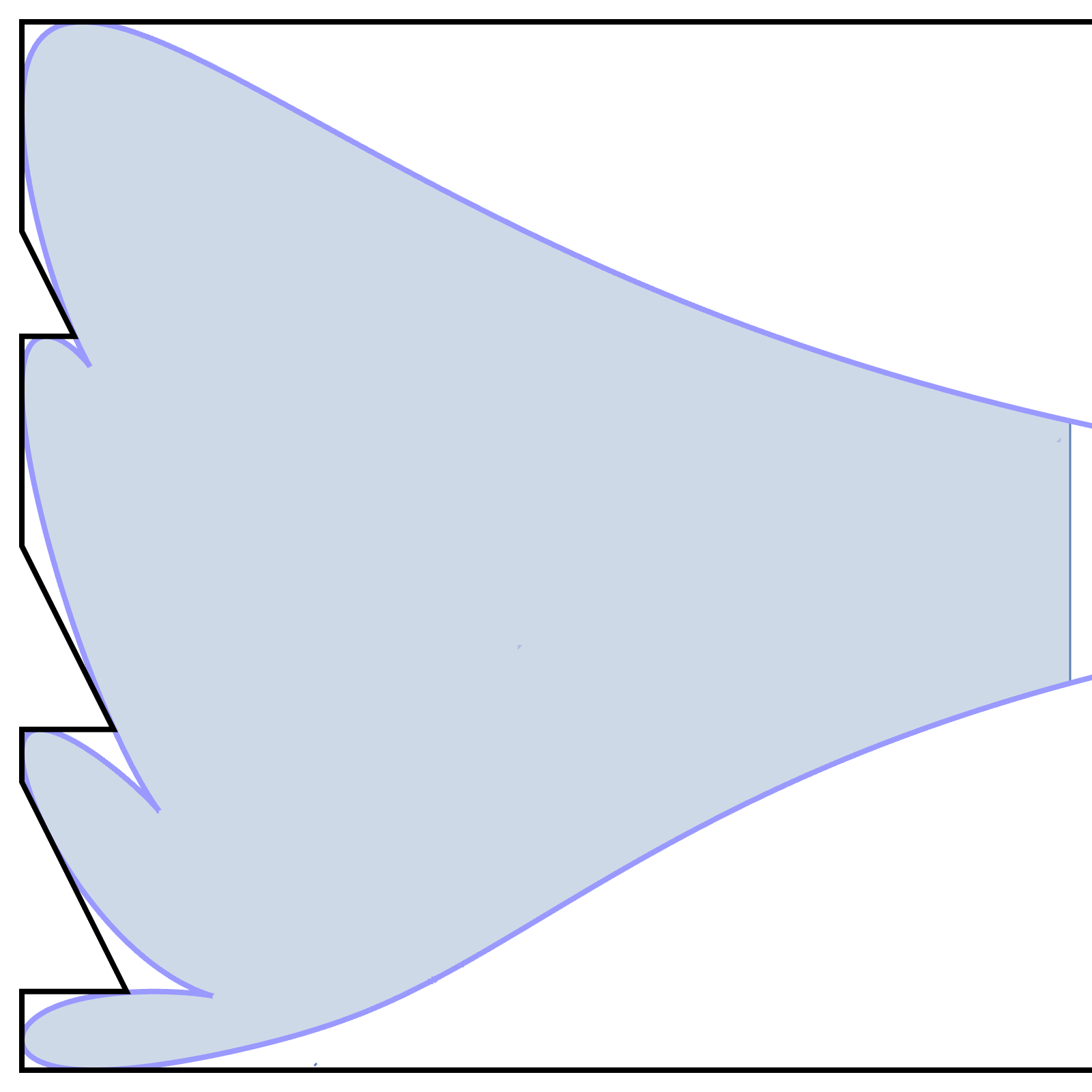}\hspace{.01\textwidth}
	\includegraphics[width=.32\textwidth,trim={0 0 2cm 0},clip]{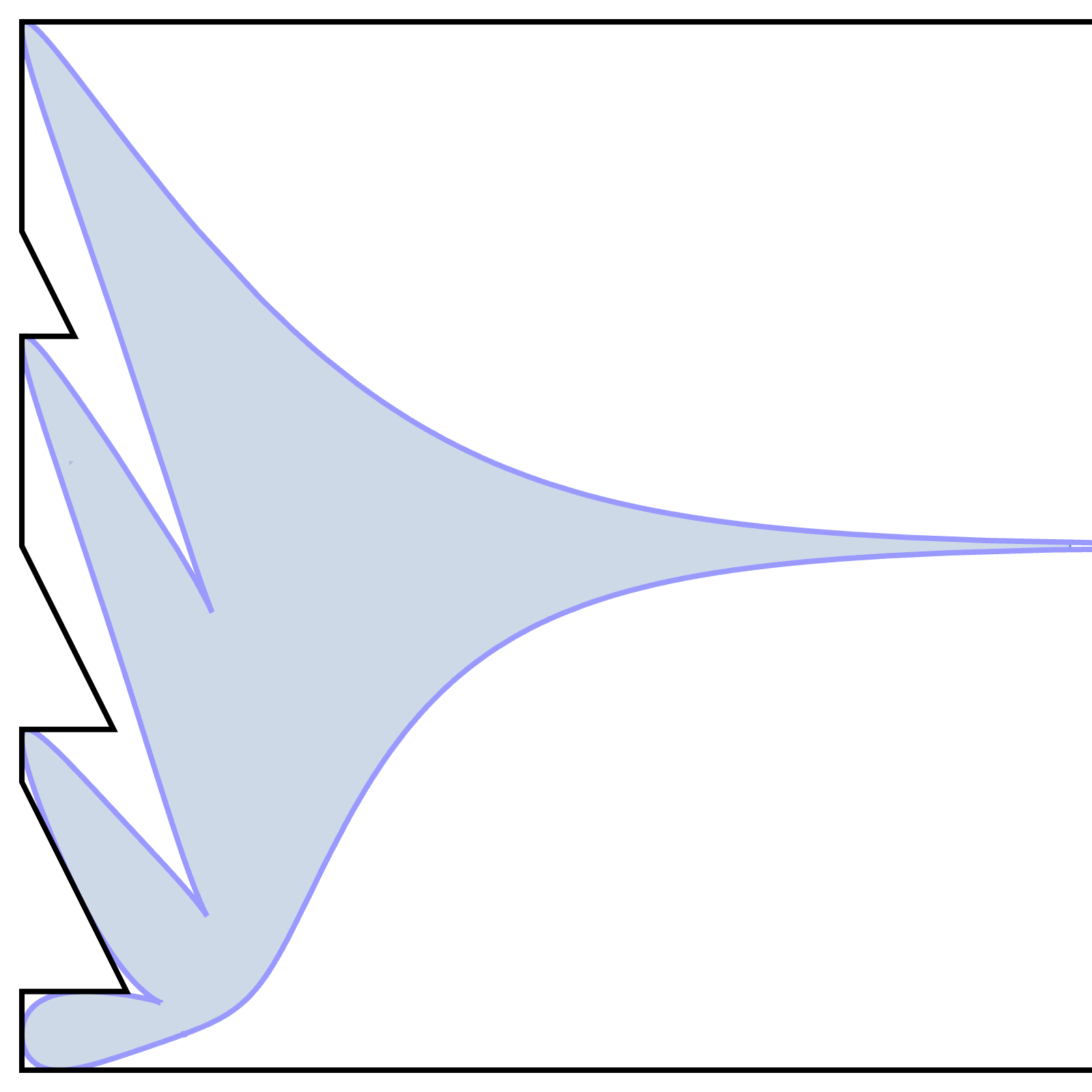}
	\caption{Examples of the 
	liquid region and the frozen boundary in the $(\tau,\rho)$ plane,
	with $\gamma=1/3,1$, and $3$, and the same initial conditions
	$\vec a=(0, 0.1, 0.2, 0.6, 1)$,
	$\vec C=(0.05, 0.45, 0.8, 1)$.
	The bounding polygonal region 
	indicates where the walks trajectories
	may lie, with the vertical straight
	pieces being the initial densely packed clusters
	of particles $\vec x$.
	Outside the liquid region, there are either no walks, or
	the walks are densely packed and move deterministically straight, horizontally
	or diagonally.}
	\label{fig:frozen_boundaries}
\end{figure}

For any $(\tau,\rho)\in \mathcal{D}$, let 
$\omega=\omega(\tau,\rho)$ be the unique root of the algebraic equation
\begin{equation*}
	\omega \ssp
	F\Bigl( e^{-\gamma \rho}\ssp\frac{1-\omega}{1-e^{\gamma\tau}\omega} \Bigr)
	=
	e^{-\gamma(\tau+1)}
\end{equation*}
in the upper half complex plane.
The existence and uniqueness of the complex root
of this equation (equivalent to \eqref{eq:critical_points_equation_exponentiated})
follow from
\Cref{sub:crit_pt} and the change 
of variables \eqref{eq:omega_for_beta_defn_in_last_step}.
With all this notation in place,
we can now formulate the main asymptotic result
of the paper:

\begin{theorem}
	\label{thm:bulk_limit}
	For any $(\tau,\rho)\in \mathcal{D}$, in the limit regime described above, we have
	\begin{equation*}
		\lim_{m\to+\infty}
		(-1)^{\Delta t}
		e^{\gamma(\tau+\rho)\Delta t}
		K_{\mathrm{walks}}
		\left( \lfloor \rho m \rfloor +\Delta p, \lfloor \tau m \rfloor +\Delta t;
		\lfloor \rho m \rfloor , \lfloor \tau m \rfloor  \right)
		=
		\mathbf{1}_{\Delta t=\Delta p=0}
		-
		\mathsf{B}_{\omega}(\Delta t,\Delta p)
	\end{equation*}
	for any fixed $\Delta t,\Delta p\in \mathbb{Z}$.
\end{theorem}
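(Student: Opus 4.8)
The plan is to perform a saddle-point (steepest descent) analysis of the double contour integral in \eqref{eq:K_walks_intro}. First I would substitute $t_1=\lfloor\tau m\rfloor+\Delta t$, $t_2=\lfloor\tau m\rfloor$, $y_1=\lfloor\rho m\rfloor+\Delta p$, $y_2=\lfloor\rho m\rfloor$, and $q=e^{-\gamma/m}$ into the integrand. The combinatorial prefactor (the first two lines of \eqref{eq:K_walks_intro}) must be handled separately: the $\mathbf{1}_{t_1=t_2}\mathbf{1}_{y_1=y_2}$ term contributes the $\mathbf{1}_{\Delta t=\Delta p=0}$; the $q$-Pochhammer term on the second line is $O(1)$ only in a bounded regime and, after multiplication by the conjugating factor $(-1)^{\Delta t}e^{\gamma(\tau+\rho)\Delta t}$, I expect it either to vanish in the limit (for $\Delta t\ge0$ it is exponentially small away from the trivial range, or it combines with part of the integral) — this bookkeeping of the explicit lower-order term against a residue extracted from the contour integral is the kind of routine-but-delicate step I would isolate into a lemma. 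The bulk of the work is the double integral: writing each $q$-Pochhammer ratio and each power of $z,w$ in the form $\exp(m\, S(\cdot)+o(m))$ using $(a;q)_k$ asymptotics with $q=e^{-\gamma/m}$ (dilogarithm-type sums, $\sum_{j}\log(1-aq^j)\approx \tfrac{1}{\gamma}\int$), the integrand becomes $e^{m(S(z)-S(w))}\cdot(\text{lower order})$ for a single action function $S$ depending on $(\tau,\rho,\gamma)$ and the cluster data $\vec a,\vec C$ through a logarithmic potential; the $\Delta t,\Delta p$ dependence sits entirely in the subexponential factor $z^{-\Delta t}w^{\Delta t}\cdot(\text{stuff})$, with $w^{t_1}z^{-t_2}/(w-z)$ and the extra $q$-Pochhammers of bounded length $t_2-t_1-1=-\Delta t-1$ contributing the $(1-u)^{\Delta t}u^{-\Delta p-1}$ shape after the change of variables.

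Next I would locate the critical points of $S$. Differentiating $S$ and simplifying (using the function $F$ defined via the cluster parameters) should show that $S'(\zeta)=0$ is exactly the algebraic equation $\omega F\!\bigl(e^{-\gamma\rho}\tfrac{1-\omega}{1-e^{\gamma\tau}\omega}\bigr)=e^{-\gamma(\tau+1)}$ after the substitution $\zeta\mapsto\omega$ via \eqref{eq:omega_for_beta_defn_in_last_step} (the change of variables the theorem points to). For $(\tau,\rho)\in\mathcal{D}$ — i.e. inside the liquid region as defined by $\partial\mathcal{D}$ — the equation has a conjugate pair of nonreal roots $\omega,\overline\omega$; I would verify that the complex-root condition is precisely the defining inequality for $\mathcal{D}$, referencing the critical-point analysis promised in \Cref{sub:crit_pt}. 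Then I deform the $w$-contour (the small circle around $0$) and the $z$-contour (around $0$, the small circle, and the points $q^{y_2+t_2+k}$) so that both pass through the saddle $\omega$ and its conjugate, arranged so that $\Re S(w)$ is maximized along the $w$-contour at $\omega,\overline\omega$ while $\Re S(z)$ is minimized along the $z$-contour there — this is the standard "the two contours cross at the saddle" picture that produces a residue from $1/(w-z)$. The contributions away from the saddles are exponentially suppressed in $m$, and what survives is the local contribution: the residue of $1/(w-z)$ as the contours are pushed past each other picks up precisely the arc integral of $(1-u)^{\Delta t}u^{-\Delta p-1}$ from $\overline\omega$ to $\omega$, i.e. $\mathsf{B}_\omega(\Delta t,\Delta p)$, with the conjugating prefactor $(-1)^{\Delta t}e^{\gamma(\tau+\rho)\Delta t}$ exactly cancelling the mismatch between the "walk" normalization of $K_{\mathrm{walks}}$ and the clean form of $\mathsf{B}_\omega$.

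The main obstacle I anticipate is the global control of the contours: one must show that the $z$-contour, which is topologically constrained (it must enclose the geometric progression $q^{y_2+t_2+k}$, $k\ge0$, and the small $w$-circle, but no other $z$-poles of the $q$-Pochhammer $(wq^{-y_1-t_1};q)_{t_1+1}$ and $(w^{-1};q)_\infty/(z^{-1};q)_\infty$ factors), can actually be deformed to a genuine steepest-descent path through $\omega,\overline\omega$ without crossing forbidden poles, and simultaneously that the $w$-circle can be enlarged to pass through the saddles while staying inside the $z$-contour where required. Because the poles accumulate at $0$ (as $q^k\to0$) and at the points $1,q^{-1},\dots$ from $(z^{-1};q)_\infty$ in the denominator, the admissible region for the saddle is pinned down by the same data that defines $\partial\mathcal{D}$, and checking that $\omega$ lands in the right place for all $(\tau,\rho)\in\mathcal{D}$ — rather than on the wrong side of some pole — is where the careful work lies. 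A secondary technical point is uniform control of the $o(m)$ errors in the $q$-Pochhammer asymptotics uniformly along the deformed (noncompact, since it approaches $0$) contours, which I would handle by splitting each contour into a small neighborhood of the saddles (where a quadratic Taylor expansion of $S$ gives a Gaussian integral, ultimately immaterial since only the residue matters) and a complementary part with a strictly negative exponent; near $z=0$ the factor $z^{-t_2}$ together with the vanishing of the $q$-Pochhammer numerator keeps the integrand controlled. Once the contour deformation is justified, the extraction of the incomplete beta kernel from the residue is a direct computation analogous to the one in \cite{okounkov2003correlation} and \cite{Petrov2012}.
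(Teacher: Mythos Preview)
Your proposal is correct and follows essentially the same approach as the paper: write the integrand as $\exp\{m(S(w)-S(z))\}$ times lower-order factors using dilogarithm asymptotics, locate the unique non-real critical point $w_c$ (the paper works in the $w$-variable and only passes to $\omega$ via the M\"obius substitution \eqref{eq:omega_for_beta_defn_in_last_step} at the very end), deform both contours through $w_c,\overline{w}_c$, and extract the incomplete beta kernel from the $w=z$ residue as an arc integral, with the additive $q$-Pochhammer term cancelling against a residue at $u=0$ when $\Delta t<0$.

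One tactical point worth noting: the paper does not simply enlarge the small $w$-circle outward to reach the saddle. Instead it first drags the $w$-contour through infinity (using that the integrand is $O(w^{-2})$ at $\infty$), turning it into a contour around the finite $w$-poles $\{q^{x_r}\}\cap\{q^{p_1+j}\}$, and only then deforms to the steepest descent contour $\gamma_w$. This sidesteps having to thread the $w$-contour between $0$ and the $z$-contour while both are being moved. For the global control of the steepest descent contours --- the obstacle you correctly flag --- the paper's device is to analyze $\Im S(v+\mathbf{i}\varepsilon;\tau,\rho)$ along horizontal lines just above $\mathbb{R}$ and show it crosses any level at most four times (Lemma~5.3), together with the behavior at infinity (Lemma~5.4); this pins down where the four half-contours from $w_c$ land on $\mathbb{R}$ and in what order relative to $e^{-\gamma(\tau+\rho)}$ and $e^{-\gamma\rho}$, which is exactly what is needed to verify the deformation avoids the forbidden poles.
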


Let us make two remarks about \Cref{thm:bulk_limit}.
First, the factor 
$(-1)^{\Delta t}
e^{\gamma(\tau+\rho)\Delta t}$
in front of $K_{\mathrm{walks}}$
is a 
so-called ``gauge transformation'' of the correlation kernel
which does not change the 
determinants in \eqref{eq:det_def_proc_intro},
and thus preserves the determinantal process. Therefore,
\Cref{thm:bulk_limit} states that
the point process of the random walks converges locally
(at the lattice level, in a neighborhood of the global position 
$(\tau,\rho)$)
to the \emph{complement} of the point process coming from the
unique ergodic
translation invariant Gibbs measure on lozenge tilings of the whole plane
with parameter $\omega$. 
The complement arises by the
Kerov's complementation principle (see, for example,
\cite[Appendix A.3]{borodin2000b})
because our correlation kernel is $\mathbf{1}-\mathsf{B}_\omega$, where $\mathbf{1}$
is the identity operator.

\smallskip

Second, let us discuss the 
densely
packed clusters assumption
\eqref{eq:densely_packed_s2}.
On the one hand, it restricts the generality of
the initial conditions. On the other hand, it leads to
elegant formulas for the global frozen boundary, and simplifies the
technical part of the analysis.
The bulk limit asymptotics of \Cref{thm:bulk_limit}
should follow for general initial data $\vec x$ by a more
delicate steepest descent analysis of our kernel,
similarly to what is done in \cite{GorinPetrov2016universality} 
for the $q=1$ noncolliding random walks with general initial data.
We do not pursue this analysis here.
See also \cite{duse2015asymptotic}, \cite{Duse2015_partII}
for limit shape and fluctuation results on uniformly random lozenge tilings 
with more general boundary conditions.

\medskip
Finally, we make a conjecture about the final absorbing time of the 
noncolliding $q$-exchangeable random walks:

\begin{remark}[Asymptotics of the absorption time of $\Upsilon_m$]
	\label{rmk:absorption_time}
	Note that the
	liquid region is unbounded.
	More precisely, the
	frozen boundary has an asymptote approaching
	$\rho=1$ as $\tau\to+\infty$. 
	This implies that the \emph{absorption time} of the Markov chain $\Upsilon_m$,
	that is, the random time 
	\begin{equation*}
		t_{\mathrm{abs}}(m)\coloneqq
		\min\left\{ t\in \mathbb{Z}_{\ge0}\colon y_1(t)=m-1 \right\},
	\end{equation*}
	grows faster than $m$.
	Based on the result of Mutafchiev
	\cite{mutafchiev2006size}
	on unrestricted random plane partitions, we conjecture that 
	$t_{\mathrm{abs}}(m)\sim m\log m$ as $m\to+\infty$.
	
	It should be possible to obtain a more precise 
	behavior with the generating function coefficients technique as in
	\cite{mutafchiev2006size} (based on Hayman’s admissible functions \cite{hayman1956genf}).
	Indeed, this is because
	our ensemble of plane partitions coming from $\Upsilon_m$ has an explicit
	partition function 
	(\Cref{prop:q_vol_Upsilon_partition_function}).
\end{remark}

\section{From lozenge tilings to noncolliding $q$-exchangeable walks}
\label{sec:from_tilings_to_walks}

Here we recall the result from \cite{petrov2022noncolliding}
which shows how the noncolliding $q$-exchangeable random walks
$\Upsilon_m$ arise as a limit of the 
$q$-distributed random lozenge tilings.

\subsection{$q$-distributed lozenge tilings of sawtooth polygons}
\label{sub:tilings}

Let $\mathbb{GT}_N$ be the set of all partitions of length
$N$, that is, $N$-tuples of nonnegative integers $\lambda =
(\lambda_1 \geq \lambda_2 \geq \dots \lambda_N \ge0)$.
Denote $|\lambda|=\lambda_1+\ldots+\lambda_N $.
We
say that $\mu\in \mathbb{GT}_{N-1}$ interlaces with
$\lambda\in \mathbb{GT}_N$, denoted by $\mu \prec\lambda$,
if $\lambda_1\ge \mu_1\ge \lambda_2\ge \ldots\ge
\lambda_{N-1}\ge \mu_{N-1}\ge \lambda_N$.
For a sequence
\begin{equation}
\Lambda = (\varnothing \prec \Lambda^{(1)} \prec \Lambda^{(2)} \prec \dots \prec \Lambda^{(N-1)} \prec \Lambda^{(N)}),
\qquad \Lambda^{(j)}\in \mathbb{GT}_j,
\label{eq:Lambda_interlacing_sequence}
\end{equation}
define its volume by
\begin{equation*}
	\mathop{\mathrm{volume}} (\Lambda) = \sum_{m=1}^{N-1} |\Lambda^{(m)}|.
\end{equation*}

Fix $\lambda\in \mathbb{GT}_N$, and consider the 
probability measure on sequences
\eqref{eq:Lambda_interlacing_sequence}
with fixed top row $\Lambda^{(N)}=\lambda$, and 
probability weight proportional to $q^{-\mathrm{volume}(\Lambda)}$.
Denote this probability measure by 
$\mathscr{T}_N^{(\lambda)}$.
We may express the partition
function of 
$\mathscr{T}_N^{(\lambda)}$ as follows
(see, e.g., 
\cite[Section 3]{Petrov2012} for more detail):
\begin{equation}
	\label{eq:Z_q_partition_function}
	Z[\mathscr{T}_N^{(\lambda)}]=\sum_{\Lambda\colon \Lambda^{(N)}=\lambda}
	q^{-\mathrm{volume}(\Lambda)}=
	s_{\lambda} (q^{1-N}, q^{2-N}, \dots, q^{-1}, 1),
\end{equation}
where $s_\lambda$ is the Schur symmetric polynomial
$s_\lambda(z_1,\ldots,z_N )=\det[z_i^{\lambda_j+N-j}]_{i,j=1}^{N}\prod\limits_{1\le 
i<j\le N}(z_i-z_j)^{-1}$.
The right-hand side of \eqref{eq:Z_q_partition_function}
may also be simplified to the product form:
\begin{equation*}
	Z[\mathscr{T}_N^{(\lambda)}]=
	q^{|\lambda|(1-N)}
	\prod_{1\le i<j\le N}\frac{q^{\lambda_i-i}-q^{\lambda_j-j}}{q^{-i}-q^{-j}}.
\end{equation*}

The probability measure $\mathscr{T}_N^{(\lambda)}$ has a bijective
interpretation as a distribution on lozenge tilings of a 
sawtooth polygon of depth $N$ and fixed top boundary
determined by $\lambda$.
Define
\begin{equation}
	\label{eq:p_n_i_and_lambdas}
	p_i^n=\Lambda^{(n)}_i-i,\qquad 
	1\le i\le n\le N.
\end{equation}
Under $\mathscr{T}_N^{(\lambda)}$, the quantities $p_i^n$ form a random
integer
array 
$\mathfrak{P}=\{p_i^n \}_{1\le i\le n\le N}$
satisfying the interlacing constraints:
\begin{equation*}
	p_{i+1}^n < p_{i}^{n-1}\le p_{i}^n
\end{equation*}
(for all $i$'s and $n$'s for which these inequalities can be written out).
Viewing each
$p_i^n$ as the 
coordinate of the center of a vertical lozenge
\lozv{2.5pt}{.2}\ssp, we may complete the 
tiling in a unique way by the other two types of lozenges.
This leads to a corresponding tiling of a sawtooth polygon
as in 
\Cref{fig:lozenge_and_schemes}.

\begin{figure}[htpb]
	\centering
	\includegraphics[width=.6\textwidth]{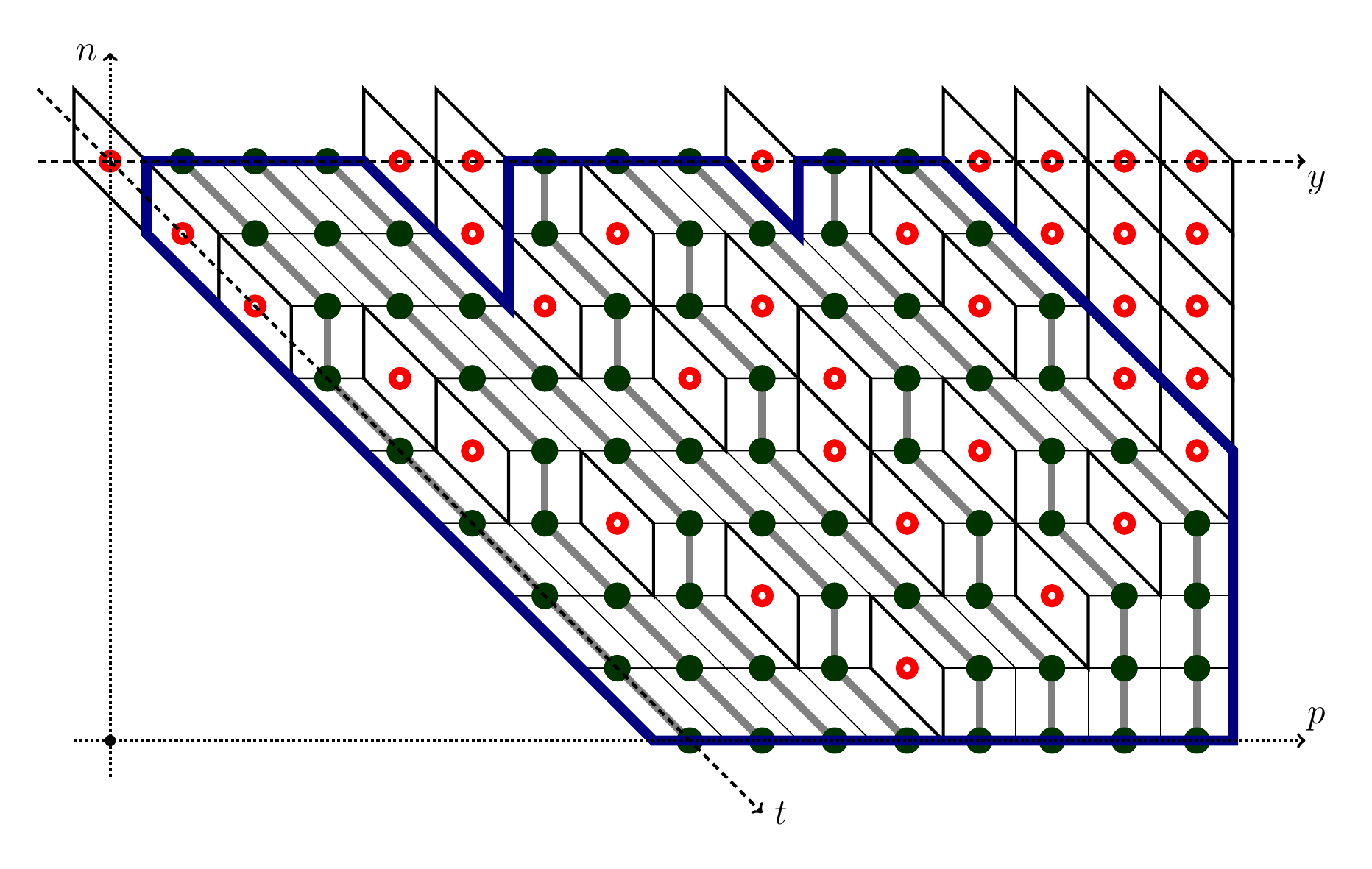}
	\caption{An example of a lozenge tiling of a sawtooth polygon of height $N=8$ with top row
		$\lambda=(16, 16, 16, 16, 14, 11, 11, 8)$. The particle array $\mathfrak{P}=\{p^n_i \}$ 
		consists of
		the red circle dots placed at the centers of the vertical lozenges, with coordinates relative to the $(p,n)$
		coordinate system (with dotted axes). }
	\label{fig:lozenge_and_schemes}
\end{figure}

\begin{remark}
	\label{rmk:T_N_vs_M_m}
	Note that the measure $\mathscr{T}_N^{(\lambda)}$ on lozenge
	tilings of a sawtooth polygon with weights proportional
	to $q^{-\mathrm{volume}}$ is different
	from the measure $\mathscr{M}_m^{(\vec x)}$ described
	in \Cref{sub:gibbs_q_lozenge_partition_function} above.
	In particular, the tilings under
	$\mathscr{M}_m^{(\vec x)}$ live in an infinite domain, 
	and are weighted proportionally to $q^{+\mathrm{volume}}$.
	In the next \Cref{sub:limit_to_walks}
	we explain how the measures
	$\mathscr{T}_N^{(\lambda)}$ as $N\to+\infty$ and special choice of $\lambda$
	lead to 
	$\mathscr{M}_m^{(\vec x)}$.
\end{remark}

\subsection{Limit transition to random walks}
\label{sub:limit_to_walks}

Now let us connect the probability measure
$\mathscr{T}_N^{(\lambda)}$ to the noncolliding
$q$-exchangeable random walks $\Upsilon_m$ from
\Cref{def:Upsilon_m}. Observe that for $\lambda\in \mathbb{GT}_N$,
we have:
\begin{equation*}
	\mathscr{T}_N^{(\lambda)}\bigl( \Lambda^{(N-1)}=\mu \bigr)
	=
	\frac{q^{-|\mu|}\ssp Z[\mathscr{T}_{N-1}^{(\mu)}]}{Z[\mathscr{T}_N^{(\lambda)}]}
	=
	q^{(N-1)\left( |\lambda|-|\mu| \right)}
	\frac{s_\mu(1,q,\ldots,q^{N-2} )}{s_\lambda(1,q,\ldots,q^{N-2},q^{N-1} )}.
\end{equation*}
Indeed, the first equality follows from the $q^{-\mathrm{volume}}$ probability
weights, and the second equality
is due to \eqref{eq:Z_q_partition_function}
and the homogeneity of the Schur polynomials.

Fix $m\ge1$ and $\vec x,\vec y\in \mathbb{W}_m$.
Let $\lambda\in \mathbb{GT}_N$ and $\mu\in \mathbb{GT}_{N-1}$
depend on $\vec x,\vec y$
as follows (here and below we assume that $N$ is sufficiently large):
\begin{equation}
	\label{eq:lambda_mu_via_x_y}
	\begin{split}
		\left\{\lambda_1-1, \lambda_2-2, \ldots, \lambda_N-N\right\} & =\{0,1,2, \ldots, N+m-1 \} \setminus \left\{x_1, \ldots, x_m \right\}, \\ 
		\left\{\mu_1-1, \mu_2-2, \ldots, \mu_{N-1}-(N-1)\right\} & =\{1,2, \ldots, N+m-1 \} \setminus \left\{y_1+1, \ldots, y_m+1 \right\}.
	\end{split}
\end{equation}
This choice of $\lambda$ and $\mu$
means that we pass from a lozenge
tiling to
nonintersecting paths 
avoiding 
the lozenges of type \lozv{2.5pt}{.2}\ssp, see
\Cref{fig:lozenge_and_schemes}.
Moreover, we choose the boundary conditions such that
the number $m$ of paths stays fixed as $N$ grows.

The following result is a particular case of
\cite[Section 3]{petrov2022noncolliding}
with $t=q$:
\begin{proposition}
	\label{prop:convergence_of_walks}
	With the above notation, for fixed $m$ and $\vec x,\vec y\in \mathbb{W}_m$,
	we have 
	\begin{equation*}
		\lim_{N \to +\infty} 
		\mathscr{T}_N^{(\lambda)}\bigl( \Lambda^{(N-1)}=\mu \bigr)
		=\Upsilon_m(\vec x,\vec y),
	\end{equation*}
	where $\Upsilon_m$ is given by \eqref{eq:intro_Upsilon}.
\end{proposition}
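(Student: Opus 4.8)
The plan is to pass to the limit $N\to+\infty$ directly in the exact identity
\[
	\mathscr{T}_N^{(\lambda)}\bigl(\Lambda^{(N-1)}=\mu\bigr)
	=\mathbf{1}_{\mu\prec\lambda}\, q^{(N-1)(|\lambda|-|\mu|)}\,
	\frac{s_\mu(1,q,\ldots,q^{N-2})}{s_\lambda(1,q,\ldots,q^{N-1})}
\]
recorded just above the statement; the indicator $\mathbf{1}_{\mu\prec\lambda}$, implicit there, is exactly the one produced by the branching rule $s_\lambda(x_1,\ldots,x_N)=\sum_{\nu\prec\lambda}s_\nu(x_1,\ldots,x_{N-1})\,x_N^{|\lambda|-|\nu|}$ evaluated at $x_N=q^{N-1}$. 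First I would record that, under the correspondence \eqref{eq:lambda_mu_via_x_y}, for all sufficiently large $N$ the interlacing $\mu\prec\lambda$ is equivalent to $y_i-x_i\in\{0,-1\}$ for every $i$ (the standard Maya-diagram dictionary between interlacing arrays and one-step-down particle moves). When this fails the left-hand side vanishes and so does $\Upsilon_m(\vec x,\vec y)$, by the product of indicators in \eqref{eq:intro_Upsilon}; so it suffices to treat the case $y_i-x_i\in\{0,-1\}$.

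In that case I would expand both principal specializations via the product formula $s_\nu(1,q,\ldots,q^{k-1})=\prod_{1\le i<j\le k}\frac{q^{\nu_i-i}-q^{\nu_j-j}}{q^{-i}-q^{-j}}$, a consequence of the product expression for $Z[\mathscr{T}_k^{(\nu)}]$ from \Cref{sub:tilings} and homogeneity of Schur polynomials. By \eqref{eq:lambda_mu_via_x_y} the exponents $\{\lambda_i-i\}_{i=1}^{N}$ and $\{\mu_i-i\}_{i=1}^{N-1}$ are exactly the complements of the finite sets $\{x_r\}$ and $\{y_r+1\}$ inside the integer windows $\{0,1,\ldots,N+m-1\}$ and $\{1,2,\ldots,N+m-1\}$. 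Applying the standard particle--hole rewriting of the $q$-Vandermonde-type numerators — factoring out the full $q$-Vandermonde of the window and collecting the ``mixed'' factors attached to each hole, each evaluating to an explicit power of $q$ times $(q;q)_{x_r}(q;q)_{N+m-1-x_r}$ — gives
\[
	s_\lambda(1,q,\ldots,q^{N-1})=c_\lambda(N)\,
	\frac{\prod_{1\le i<j\le m}(q^{x_j}-q^{x_i})}
	{\prod_{r=1}^{m} q^{\,x_r(N+m-1-x_r)+\binom{x_r}{2}}\,(q;q)_{x_r}\,(q;q)_{N+m-1-x_r}},
\]
with $c_\lambda(N)$ an $N$-only prefactor (assembled from the window Vandermonde and $\prod_{1\le i<j\le N}(q^{-i}-q^{-j})$), and the analogous identity for $s_\mu(1,q,\ldots,q^{N-2})$ with $x\mapsto y+1$, $N\mapsto N-1$, $c_\lambda\mapsto c_\mu$.

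Forming the ratio and substituting $|\lambda|-|\mu|=N+m+|\vec y|-|\vec x|$ (immediate from \eqref{eq:lambda_mu_via_x_y}), the two $N$-only prefactors, the factor $q^{(N-1)(|\lambda|-|\mu|)}$, and the $q^{x_r(N+m-1-x_r)}$-type exponents recombine. The hard part — and really the only substantive step — is the bookkeeping that all $N$-dependent powers of $q$ cancel identically (the $N^2$, $Nm$ and $N$ contributions, whose cancellation is in any case forced by the existence of a finite nonzero limit), while what remains of the $N$-dependence are ratios of $q$-Pochhammer symbols of the form $(q;q)_{N+c}/(q;q)_{N+c'}$, each tending to $1$ as $N\to+\infty$. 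After the cancellation the limit equals
\[
	q^{-\binom m2 + m|\vec x| - (m-1)|\vec y| + \sum_{r=1}^m(\binom{y_r+1}{2}-\binom{x_r+1}{2})}\,
	\prod_{1\le i<j\le m}\frac{q^{y_j}-q^{y_i}}{q^{x_j}-q^{x_i}}\,
	\prod_{r=1}^{m}\frac{(q;q)_{x_r}}{(q;q)_{y_r}}.
\]
Finally, using $y_r\in\{x_r,x_r-1\}$ — which makes $\binom{x_r+1}{2}-\binom{y_r+1}{2}$ equal to $0$ or $x_r$, and $(q;q)_{x_r}/(q;q)_{y_r}$ equal to $1$ or $1-q^{x_r}$, respectively — together with the elementary identity $\sum_r x_r=|\vec x|$ to collect the leftover powers of $q$, this rearranges into $q^{-\binom m2+(m-1)(|\vec x|-|\vec y|)}\prod_{1\le i<j\le m}\frac{q^{y_j}-q^{y_i}}{q^{x_j}-q^{x_i}}\prod_{r=1}^{m}\bigl(q^{x_r}\mathbf{1}_{y_r=x_r}+(1-q^{x_r})\mathbf{1}_{y_r=x_r-1}\bigr)=\Upsilon_m(\vec x,\vec y)$ of \eqref{eq:intro_Upsilon}. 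The overall sign is automatically $+1$ since both sides are manifestly nonnegative (alternatively it is tracked through the $(-1)^{\binom{\cdot}{2}}$ factors from the particle--hole step). As already noted in the text, the statement is also the $t=q$ specialization of \cite[Section 3]{petrov2022noncolliding}, which gives an independent route.
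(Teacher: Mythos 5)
Your proposal is correct, and it takes a genuinely different route from the paper's treatment: the paper does not prove \Cref{prop:convergence_of_walks} at all but simply cites it as the $t=q$ specialization of \cite[Section 3]{petrov2022noncolliding}, whereas you give a self-contained direct computation from the exact branching identity for $\mathscr{T}_N^{(\lambda)}(\Lambda^{(N-1)}=\mu)$ displayed just above the statement. Your outline is sound: the translation of interlacing into $y_i-x_i\in\{0,-1\}$ via the Maya-diagram complement is the standard nonintersecting-path dictionary; the product formula $s_\nu(1,q,\ldots,q^{k-1})=\prod_{i<j}\frac{q^{\nu_i-i}-q^{\nu_j-j}}{q^{-i}-q^{-j}}$ follows from the expression for $Z[\mathscr{T}_k^{(\nu)}]$ in \Cref{sub:tilings} plus homogeneity, as you say; the particle--hole rewriting of the $q$-Vandermonde of $\{\lambda_i-i\}$ over the window $\{0,\ldots,N+m-1\}$ is a standard identity and the per-hole factor $q^{\binom{x_r}{2}+x_r(N+m-1-x_r)}(q;q)_{x_r}(q;q)_{N+m-1-x_r}$ (up to sign) is correct; and your substitution $|\lambda|-|\mu|=N+m+|\vec y|-|\vec x|$ is right. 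I checked that the limiting expression you arrive at does collapse, under $y_r\in\{x_r,x_r-1\}$, to \eqref{eq:intro_Upsilon}: the residual exponent of $q$ matches $(m-1)(|\vec x|-|\vec y|)-\binom m2+\sum_{r:y_r=x_r}x_r$ after using $|\vec x|=\sum_r x_r$, exactly as you outline. The one place where your argument is genuinely sketched rather than carried out is the cancellation of the $N^2$-, $Nm$- and $N$-linear powers of $q$ between $c_\lambda(N)$, $c_\mu(N)$, the $q^{(N-1)(|\lambda|-|\mu|)}$ prefactor, and the per-hole exponents; you correctly observe that this cancellation is forced \emph{a posteriori} by the existence of a finite nonzero limit, which is an acceptable shortcut but leaves that step unverified in detail. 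As for what each route buys: the paper's citation to \cite{petrov2022noncolliding} comes packaged with the full Macdonald $(q,t)$-deformation and inherits that generality for free, while your Schur-level argument is elementary and self-contained but is tied to $t=q$ since it relies on the explicit product form of the principal specialization, which is special to Schur polynomials.
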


\Cref{prop:convergence_of_walks} states that the limiting distribution of the 
nonintersecting paths in \Cref{fig:lozenge_and_schemes}
is the same as the distribution of the trajectory of the noncolliding $q$-exchangeable random walks $\Upsilon_m$.
In \Cref{fig:lozenge_and_schemes},
the noncolliding paths live in the coordinate
system $(y,t)$ (with dashed axes), and converge $\Upsilon_m$
in an arbitrary finite neighborhood of the point $y=t=0$.
In \Cref{sec:limit_in_kernel} below
we use this limiting relation to write down the 
correlation kernel for the 
random walks $\Upsilon_m$.

\section{Limit transition in the kernel}
\label{sec:limit_in_kernel}

\subsection{Correlation kernel for $q$-distributed lozenge
tilings of sawtooth polygons}

We begin by recalling Theorem 4.1 from \cite{Petrov2012} about the correlation
kernel of the measure $\mathscr{T}_N^{(\lambda)}$ 
on lozenge
tilings of the sawtooth polygon with top row $\lambda$
described in \Cref{sub:tilings}
above.
By the results of 
\cite{Kenyon1997LocalStat}
(see also \cite[Section 7]{Borodin2009}),
this measure 
is a determinantal point process in the sense that
for any $\ell\ge 1$ and any pairwise distinct
$(p_1,n_1),\ldots,(p_\ell,n_\ell)\in \mathbb{Z}^2$ we have
\begin{equation}
	\label{eq:corr_f_def}
	\mathop{\mathrm{Prob}}\left( \textnormal{the 
	random array $\mathfrak{P}$ contains all
	$(p_1,n_1),\ldots,(p_\ell,n_\ell)$} \right)
	=
	\det\left[ K_{\mathrm{loz}}(p_i,n_i;p_j,n_j) \right]_{i,j=1}^{\ell}.
\end{equation}
The kernel $K_{\mathrm{loz}}$ is computed in \cite[Theorem 4.1]{Petrov2012}
and is given by the following 
double contour integral
formula:
\begin{equation}
	\label{eq:kernel_from_Pet14}
	\begin{split}
		&K_{\mathrm{loz}}(p_1,n_1;p_2,n_2)=
		-\mathbf{1}_{n_2<n_1}\mathbf{1}_{p_2\le p_1}q^{n_2(p_1-p_2)}
		\frac{(q^{p_1-p_2+1};q)_{n_1-n_2-1}}{(q;q)_{n_1-n_2-1}}
		\\&\hspace{10pt}
		+\frac{(q^{N-1};q^{-1})_{N-n_1}}{(2\pi\mathbf{i})^2}
		\oiint
		\frac{dz\ssp dw}{w}
		\frac{q^{n_2(p_1-p_2)}z^{n_2}}{w-z}
		\\&\hspace{20pt}\times
		{}_2\phi_1(q^{-1},q^{n_1-1};q^{N-1}\mid q^{-1};w^{-1})\ssp
		\frac{(zq^{1-p_2+p_1};q)_{N-n_2-1}}{(q;q)_{N-n_2-1}}
		\prod_{r=1}^{N}\frac{w-q^{\lambda_r-r-p_1}}{z-q^{\lambda_r-r-p_1}}.
	\end{split}
\end{equation}
Here the points
$(p_1,n_1),(p_2,n_2)\in \mathbb{Z}^2$ are such that $1\le n_1\le N$,
$1\le n_2\le N-1$. The $z$ and $w$ integration contours are
counterclockwise and do not intersect. 
The $z$ contour encircles 
$q^{p_2-p_1},q^{p_2-p_1+1},\ldots,q^{\lambda_1-1-p_1}$
and not 
$q^{p_2-p_1-1},q^{p_2-p_1-2},\ldots$.
The $w$ contour is sufficiently large and goes around $0$
and the $z$ contour.
Finally, $_2\phi_1$ in \eqref{eq:kernel_from_Pet14}
is the (in this case, terminating) Gauss $q$-hypergeometric
function given by 
\begin{equation}
	\label{eq:2phi1_defn_our_case}
	{}_2\phi_1(q^{-1},q^{n_1-1};q^{N-1}\mid q^{-1};w^{-1})
	=
	\sum_{j=0}^{n_1-1}
	\frac{(q^{-1};q^{-1})_j(q^{n_1-1};q^{-1})_j}{(q^{N-1};q^{-1})_j}
	\frac{w^{-j}}{(q^{-1};q^{-1})_j}.
\end{equation}

In the rest of this section we consider the $N\to+\infty$
limit of the 
kernel $K_{\mathrm{loz}}$ in the regime leading to the 
noncolliding $q$-exchangeable random walks (as discussed in \Cref{sub:limit_to_walks}),
and prove \Cref{thm:det_kernel_for_walks_intro}.

\subsection{Rewriting the kernel}

Fix $t_1\ge0$, $t_2>0$, and let $n_1=N-t_1,n_2=N-t_2$ (throughout the rest of the section
we assume that $N$ is sufficiently large). 
Change the integration variables in \eqref{eq:kernel_from_Pet14} 
as $\tilde z=zq^{p_1}$,
$\tilde w=wq^{p_1}$, and then rename back to $z,w$. We have
\begin{equation}
	\begin{split}
		\label{eq:K_rewrite_1}
		&q^{N(p_2-p_1)}K_{\mathrm{loz}}(p_1,N-t_1;p_2,N-t_2)=
		-\mathbf{1}_{t_2>t_1}\mathbf{1}_{p_2\le p_1}q^{(-t_2)(p_1-p_2)}
		\frac{(q^{p_1-p_2+1};q)_{t_2-t_1-1}}{(q;q)_{t_2-t_1-1}}
		\\&\hspace{10pt}
		+\frac{(q^{N-1};q^{-1})_{t_1}}{(2\pi\mathbf{i})^2}
		\oiint
		\frac{dz \ssp dw}{w}
		\frac{q^{(-t_2)(p_1-p_2)}z^{N-t_2}q^{-p_1(N-t_2)}}{w-z}
		\\&\hspace{20pt}\times
		{}_2\phi_1(q^{-1},q^{N-t_1-1};q^{N-1}\mid q^{-1};w^{-1}q^{p_1})
		\frac{(zq^{1-p_2};q)_{t_2-1}}{(q;q)_{t_2-1}}
		\prod_{r=1}^{N}\frac{w-q^{\lambda_r-r}}{z-q^{\lambda_r-r}}.
	\end{split}
\end{equation}
Here the $z$ contour encircles only $q^{p_2},q^{p_2+1},q^{p_2+2},\ldots $ and no other $z$ poles of the integrand,
and the $w$ contour goes around $0$ and the $z$ contour.

The factor $q^{N(p_2-p_1)}$ is a so-called ``gauge transformation'' of the correlation kernel
which does not change the 
determinants in \eqref{eq:corr_f_def} and thus preserves the determinantal process.
In general, by a gauge transformation we mean replacing a kernel
$K(p_1,t_1;p_2,t_2)$ by 
$\frac{f(p_1,t_1)}{f(p_2,t_2)}K(p_1,t_1;p_2,t_2)$, where $f$ is nowhere vanishing.

\medskip

In the next step, we use the fact that the top row
$\lambda$ depends on $N$ as in \eqref{eq:lambda_mu_via_x_y}.
Here $\vec x\in \mathbb{W}_m$ is the fixed initial configuration of the noncolliding $q$-exchangeable random walks $\Upsilon_m$.
Relation \eqref{eq:lambda_mu_via_x_y} allows to express the last product 
over $r$
in the integrand in \eqref{eq:K_rewrite_1}
in terms of the $x_j$'s. After necessary simplifications, we
obtain
\begin{equation}
	\label{eq:K_rewrite_2}
	\begin{split}
		&q^{N(p_2-p_1)}K_{\mathrm{loz}}(p_1,N-t_1;p_2,N-t_2)=
		-\mathbf{1}_{t_2>t_1}\mathbf{1}_{p_2\le p_1}q^{(-t_2)(p_1-p_2)}
		\frac{(q^{p_1-p_2+1};q)_{t_2-t_1-1}}{(q;q)_{t_2-t_1-1}}
		\\&\hspace{10pt}
		+\frac{(q^{N-1};q^{-1})_{t_1}}{(2\pi\mathbf{i})^2}
		\oiint
		\frac{dz \ssp dw}{w}
		\frac{z^{-t_2}q^{p_2t_2}}{w-z}
		q^{-Np_1}w^N{}_2\phi_1(q^{-1},q^{N-t_1-1};q^{N-1}\mid q^{-1};w^{-1}q^{p_1})
		\\&\hspace{20pt}
		\times
		\frac{(zq^{1-p_2};q)_{t_2-1}}{(q;q)_{t_2-1}}
		\prod_{j=0}^{N+m-1}\frac{1-q^{j}/w}{1-q^{j}/z}
		\prod_{r=1}^{m}\frac{1-q^{x_r}/z}{1-q^{x_r}/w}.
		\end{split}
\end{equation}
The integration contours in \eqref{eq:K_rewrite_2}
are the same as in \eqref{eq:K_rewrite_1}.

\subsection{Exchanging the contours}
\label{sub:exchange_contours_kernel}

We now move the $w$ contour inside the $z$ contour in \eqref{eq:K_rewrite_2}.
The integral stays the same, 
but we need to add a contour
integral of minus the residue $\mathop{\mathrm{Res}}_{z=w}$ over the $w$ contour around $0$.
The residue is equal to
\begin{equation}
	\label{eq:K_rewrite_residue}
	\begin{split}
		&-
		\mathop{\mathrm{Res}}\nolimits_{z=w}=
		\frac{(q^{N-1};q^{-1})_{t_1}}{2\pi\mathbf{i}}
		\frac{1}{w}
		w^{-t_2}q^{p_2t_2}
		\\&\hspace{80pt}\times
		q^{-Np_1}w^N{}_2\phi_1(q^{-1},q^{N-t_1-1};q^{N-1}\mid q^{-1};w^{-1}q^{p_1})
		\frac{(wq^{1-p_2};q)_{t_2-1}}{(q;q)_{t_2-1}}.
	\end{split}
\end{equation}

\begin{lemma}
	\label{lemma:K_rewrite_residue}
	The integral in $w$ of 
	\eqref{eq:K_rewrite_residue}
	over a small contour around zero is equal to
	\begin{equation*}
		\mathbf{1}_{t_2>t_1}q^{(-t_2)(p_1-p_2)}
		\frac{(q^{p_1-p_2+1};q)_{t_2-t_1-1}}{(q;q)_{t_2-t_1-1}}.
	\end{equation*}
\end{lemma}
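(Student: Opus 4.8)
The plan is to evaluate the $w$-integral of \eqref{eq:K_rewrite_residue} by residues at the poles inside the small contour around $0$. The only poles of the integrand near $0$ come from the $q$-hypergeometric function ${}_2\phi_1(q^{-1},q^{N-t_1-1};q^{N-1}\mid q^{-1};w^{-1}q^{p_1})$, which by \eqref{eq:2phi1_defn_our_case} is a finite sum of powers $w^{-j}$ for $0\le j\le t_1$; combined with the prefactors $w^{N-t_2-1}$ and the polynomial $(wq^{1-p_2};q)_{t_2-1}$ in $w$ of degree $t_2-1$, the integrand is a Laurent polynomial in $w$ (for $N$ large), and the contour integral simply picks off its $w^{-1}$ coefficient. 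So the computation amounts to extracting a single coefficient from an explicit product of a terminating $q$-series and a $q$-Pochhammer polynomial.

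Concretely, I would write the integrand (dropping the $1/(2\pi\mathbf{i})$) as
\begin{equation*}
	(q^{N-1};q^{-1})_{t_1}\,q^{p_2 t_2}q^{-Np_1}\,
	w^{N-t_2-1}\,\frac{(wq^{1-p_2};q)_{t_2-1}}{(q;q)_{t_2-1}}
	\sum_{j=0}^{t_1}
	\frac{(q^{-1};q^{-1})_j(q^{N-t_1-1};q^{-1})_j}{(q^{N-1};q^{-1})_j(q^{-1};q^{-1})_j}\,q^{p_1 j}w^{-j},
\end{equation*}
and require the coefficient of $w^{-1}$, i.e. the coefficient of $w^{j-N+t_2}$ in $(wq^{1-p_2};q)_{t_2-1}/(q;q)_{t_2-1}$, summed against the $j$-th $q$-series term. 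Using $(wq^{1-p_2};q)_{t_2-1}=\sum_{k\ge0}\binom{t_2-1}{k}_q(-1)^k q^{(1-p_2)k+\binom k2}w^k$, only the term $k=N-t_2-j$ contributes; this forces the range of $j$ and, after substituting and simplifying the resulting ratio of $q$-Pochhammer symbols (converting the base-$q^{-1}$ symbols to base $q$ and collecting powers of $q$), one should land on $\mathbf{1}_{t_2>t_1}q^{-t_2(p_1-p_2)}(q^{p_1-p_2+1};q)_{t_2-t_1-1}/(q;q)_{t_2-t_1-1}$.

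The main obstacle is purely bookkeeping: matching the powers of $q$ and the $q$-Pochhammer factors correctly, in particular handling the conversions $(a;q^{-1})_j = (a^{-1};q)_j(-a)^j q^{-\binom j2}$ and the $q$-binomial identities, and checking that the leftover sum over $j$ telescopes or collapses to a single nonzero term (which is what produces the clean answer rather than a genuine $q$-series). One should also verify the support constraint: when $t_2\le t_1$ there is no valid $k$ in range, giving $0$, consistent with the indicator $\mathbf{1}_{t_2>t_1}$. A useful sanity check is the $q\to1$ (or $q=1$) specialization, where \eqref{eq:K_rewrite_residue} and its claimed integral both reduce to known binomial expressions from the uniform lozenge tiling / noncolliding simple random walk kernel, and the $q$-exponents $q^{-Np_1}$, $(q^{N-1};q^{-1})_{t_1}$ cancel against the leading behavior of the ${}_2\phi_1$ sum. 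Alternatively, instead of coefficient extraction one could recognize the $w$-integral as a contour-integral representation of a ${}_2\phi_1$ evaluated via the $q$-Chu–Vandermonde summation, which would package the same cancellation more conceptually; I would mention this as the slicker route but carry out the direct coefficient computation since it is self-contained.
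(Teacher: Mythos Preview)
Your approach is essentially the paper's: recognize the integrand as a Laurent polynomial in $w$, extract the $w^{-1}$ coefficient, and simplify. Two corrections are needed.

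First, the range of $j$ in the ${}_2\phi_1$ is $0\le j\le N-t_1-1$, not $0\le j\le t_1$ (the terminating factor is $(q^{N-t_1-1};q^{-1})_j$). With your stated range the coefficient of $w^{-1}$ would vanish for large $N$, since you need $k=j-(N-t_2)\ge 0$, i.e.\ $j\ge N-t_2$. The correct constraint $N-t_2\le j\le N-t_1-1$ is exactly what produces the indicator $\mathbf{1}_{t_2>t_1}$.

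Second, the leftover sum does \emph{not} collapse to a single term. After the coefficient extraction you get a genuine sum with $t_2-t_1$ terms (indexed by $j$ in $[N-t_2,\,N-t_1-1]$, or equivalently $k=j-(N-t_2)\in[0,\,t_2-t_1-1]$). The paper observes that after pulling out common factors this sum is precisely the $q$-binomial expansion
\[
\sum_{k=0}^{t_2-t_1-1}(-1)^k q^{\binom{k}{2}}q^{(t_1-p_1+p_2-t_2+1)k}\binom{t_2-t_1-1}{k}_q
=(q^{t_1-p_1+p_2-t_2+1};q)_{t_2-t_1-1},
\]
which is what yields the factor $(q^{p_1-p_2+1};q)_{t_2-t_1-1}$. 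Your alternative suggestion of $q$-Chu--Vandermonde is overkill here; the plain $q$-binomial theorem suffices, but you do need to invoke it rather than expect a one-term miracle.
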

\begin{proof}
	Notice that ${}_2\phi_1(q^{-1},q^{N-t_1-1};q^{N-1}\mid q^{-1};w^{-1}q^{p_1})$ and $(wq^{1-p_2}; q)_{t_2-1}$ are Laurent polynomials
	in $w$.
	Therefore, the integral
	of 
	\eqref{eq:K_rewrite_residue}
	over a small contour around zero
	is simply the operation of picking the coefficient
	by $1/w$.

	By the $q$-binomial theorem, we can write
	\begin{equation}
		\label{eq:qpoch_series}
		\begin{split}
			(w q^{1-p_2}; q)_{t_2-1} &= (w^{-1} q^{p_2-t_2+1}; q)_{t_2-1} (-w q^{1-p_2})^{t_2-1} q^{\binom {t_2-1} 2} \\
			&= (-w q^{1-p_2})^{t_2-1} q^{\binom
			{t_2-1} 2}  \sum_{j=0}^{t_2 -1} (-1)^j
			q^{(p_2-t_2+1)j} q^{\binom{j}2}
			\frac{(q;q)_{t_2 -1}}{(q;q)_j
			(q;q)_{t_2-1-j}}   w^{-j}.
		\end{split}
	\end{equation}
	Using formula \eqref{eq:2phi1_defn_our_case}
	for ${}_2\phi_1$ and
	\eqref{eq:qpoch_series}, the product of the two Laurent polynomials has the form
	\begin{equation*}
		\sum_{j=0}^{N-t_1-1}  q^{p_1 j}
		\frac{(q^{N-t_1-1};q^{-1})_j}{(q^{N-1};q^{-1})_j}
		w^{-j} \times \sum_{j=0}^{t_2 -1} (-1)^j
		q^{(p_2-t_2+1)j} q^{\binom{j}2} \frac{(q;q)_{t_2-1}}
		{(q;q)_j (q;q)_{t_2-1-j}}   w^{-j}=
		\sum_{j=0}^{N+t_2-t_1-2} \mathsf{d}_j w^{-j}.
	\end{equation*}
	
	From the prefactor in \eqref{eq:K_rewrite_residue} we see that we need to compute
	the sum $\mathsf{d}_{N-1}$ which has the form
	\begin{equation*}
		\mathsf{d}_{N-1} = \sum_{m=0}^{N-1} q^{p_1(N-1-m)}q^{\binom m 2} (-1)^m q^{(p_2 - t_2 + 1) m} \frac{(q^{N-t_1-1};q^{-1})_{N-1-m}}
		{(q^{N-1};q^{-1})_{N-1-m}} \frac{(q;q)_{t_2-1}}{(q;q)_m (q;q)_{t_2-1-m}}.
	\end{equation*}
	Observe that $m$-th term in the sum vanishes for $m>t_2$ or $m<t_1$, so 
	the limits of the summation are 
	$\mathbf{1}_{t_2 > t_1} \sum_{m=t_1} ^{t_2-1}$.
	Rearranging the terms and relabeling $k=m-t_1$, we have
	\begin{equation*}
		\begin{split}
			\mathsf{d}_{N-1} &= \mathbf{1}_{t_2 > t_1} (-1)^{t_1} \frac{(q^{t_2 - t_1},q)_{t_1}}{(q^{N - t_1},q)_{t_1}} q^{p_1(N-1-t_1)+\binom{t_1}2 + (p_2 - t_2 + 1) t_1} \\
			&\hspace{90pt}\times
			\sum_{k=0} ^{t_2-t_1-1} q^{  (t_1 - p_1 + p_2 - t_2 +1) k} q^{\binom k 2} (-1)^{k} 
			\frac{(q;q)_{t_2-t_1-1}}{(q;q)_{k}(q;q)_{t_2-t_1-1-k}}.
		\end{split}
	\end{equation*}
	Applying the $q$-binomial theorem, we can simplify this sum to:
	\begin{equation*}
		\mathsf{d}_{N-1} = \mathbf{1}_{t_2 > t_1} (-1)^{t_1}
		\frac{(q^{t_2 - t_1} ;q)_{t_1} }{(q^{N - t_1},q)_{t_1}}
		q^{p_1(N-1-t_1)+\binom{t}2 + (p_2 - t_2 + 1) t_1}
		(q^{t_1 - p_1 + p_2 - t_2 +1}; q)_{t_2-t_1-1}.
	\end{equation*}
	Putting together all factors from the above computation completes the proof. 
\end{proof}

By \Cref{lemma:K_rewrite_residue}, the kernel takes the form
\begin{equation}
	\label{eq:K_rewrite_3}
	\begin{split}
		&q^{N(p_2-p_1)}K_{\mathrm{loz}}(p_1,N-t_1;p_2,N-t_2)=
		\mathbf{1}_{t_2>t_1}\mathbf{1}_{p_2> p_1}q^{(-t_2)(p_1-p_2)}
		\frac{(q^{p_1-p_2+1};q)_{t_2-t_1-1}}{(q;q)_{t_2-t_1-1}}
		\\&\hspace{10pt}
		+\frac{(q^{N-1};q^{-1})_{t_1}}{(2\pi\mathbf{i})^2}
		\oiint
		\frac{dz \ssp dw}{w}
		\frac{z^{-t_2}q^{p_2t_2}}{w-z}
		\ssp q^{-Np_1}w^N{}_2\phi_1(q^{-1},q^{N-t_1-1};q^{N-1}\mid q^{-1};w^{-1}q^{p_1})
		\\&\hspace{20pt}\times
		\frac{(zq^{1-p_2};q)_{t_2-1}}{(q;q)_{t_2-1}}
		\prod_{j=0}^{N+m-1}\frac{1-q^{j}/w}{1-q^{j}/z}
		\prod_{r=1}^{m}\frac{1-q^{x_r}/z}{1-q^{x_r}/w}.
	\end{split}
\end{equation}
The integration contours in \eqref{eq:K_rewrite_3} have changed, namely, the $w$ contour is an arbitrarily small 
circle around $0$, and the $z$ contour goes around $q^{p_2},q^{p_2+1},q^{p_2+2},\ldots $, $0$, the $w$ contour, and 
encircles no other $z$ poles of the integrand.
In \eqref{eq:K_rewrite_3}, the first summand is a combination of the 
residue from \Cref{lemma:K_rewrite_residue} and the first summand in the previous expression 
\eqref{eq:K_rewrite_2}.

\subsection{Limit of the $q$-hypergeometric function}
\label{sub:hyp_limit_for_kernel}

After exchanging the $z$ and $w$ contours, 
$|w|$ can be taken arbitrarily small. This allows to take a limit in $N$ in the
part of the integrand in \eqref{eq:K_rewrite_3} 
containing the $q$-hypergeometric function $_2\phi_1$ (observe that this is 
essentially the only dependence on $N$ left in the integrand). 
Denote
\begin{equation*}
	Q_N(w)\coloneqq 
	(q^{N-1};q^{-1})_{t_1}
	q^{-Np_1}w^N{}_2\phi_1(q^{-1},q^{N-t_1-1};q^{N-1}\mid q^{-1};w^{-1}q^{p_1}).
\end{equation*}
Then
\begin{equation*}
	\begin{split}
		Q_N(w)
		&
		=
		(q^{N-1};q^{-1})_{t_1}
		q^{-Np_1}w^N
		\sum_{j=0}^{N-t_1-1}
		\frac{(q^{N-t_1-1};q^{-1})_j}{(q^{N-1};q^{-1})_j}
		\ssp w^{-j}q^{jp_1}
		\\&
		=
		(q^{N-1};q^{-1})_{t_1}
		\sum_{k=0}^{N-t_1-1}
		\frac{(q^{N-t_1-1};q^{-1})_{N-t_1-1-k}}{(q^{N-1};q^{-1})_{N-t_1-1-k}}
		(w/q^{p_1})^{t_1+1+k},
	\end{split}
\end{equation*}
where we used \eqref{eq:2phi1_defn_our_case} and in the last line flipped the summation index as $k=N-t_1-1-j$.
We have 
\begin{equation*}
	\lim_{N\to+\infty}(q^{N-1};q^{-1})_{t_1}=1.
\end{equation*}
Next, in each $k$-th term in the sum we have (for $k$ fixed):
\begin{multline*}
	\frac{(q^{N-t_1-1};q^{-1})_{N-t_1-1-k}}{(q^{N-1};q^{-1})_{N-t_1-1-k}}
	\ssp
	(w/q^{p_1})^{t_1+1+k}
	\\=
	(w/q^{p_1})^{t_1+1+k}\prod_{i=0}^{N-t_1-k-2}
	\frac{1-q^{k+i+1}}{1-q^{t_1+k+i+1}}
	\to
	(w/q^{p_1})^{t_1+1+k}\prod_{i=0}^{t_1-1}
	(1-q^{k+i+1}),
	\qquad N\to+\infty,
\end{multline*}
and because $|w|$ is small, the convergence is uniform in $k$ and $w$.
Thus, we have 
\begin{equation}
	\label{eq:limit_of_hyp_in_kernel}
	\lim_{N\to+\infty}Q_N(w)=
	\sum_{k=0}^{\infty}
	(w/q^{p_1})^{t_1+1+k}
	(q^{k+1};q)_{t_1},
\end{equation}
uniformly in $w$ for small $|w|$.

\begin{lemma}
	\label{lemma:hyp_limit_second_lemma}
	The sum in the right-hand side of
	\eqref{eq:limit_of_hyp_in_kernel} is equal to 
	\begin{equation*}
		(w/q^{p_1})^{t_1+1}\frac{(q;q)_{t_1}}{(wq^{-p_1};q)_{t_1+1}}.
	\end{equation*}
\end{lemma}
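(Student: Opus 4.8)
The plan is to recognize the sum in \eqref{eq:limit_of_hyp_in_kernel} as a specialization of the $q$-binomial theorem. Writing $v\coloneqq w/q^{p_1}$ for brevity, the claim is
\begin{equation*}
	\sum_{k=0}^{\infty} v^{t_1+1+k}\,(q^{k+1};q)_{t_1}
	=
	v^{t_1+1}\,\frac{(q;q)_{t_1}}{(v;q)_{t_1+1}}.
\end{equation*}
First I would pull out the common factor $v^{t_1+1}$ and also $(q;q)_{t_1}$, so that the identity to prove becomes $\sum_{k\ge0} v^k\,\frac{(q^{k+1};q)_{t_1}}{(q;q)_{t_1}} = (v;q)_{t_1+1}^{-1}$. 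The key observation is that $\frac{(q^{k+1};q)_{t_1}}{(q;q)_{t_1}} = \frac{(q;q)_{t_1+k}}{(q;q)_k\,(q;q)_{t_1}} = \binom{t_1+k}{k}_q$, a $q$-binomial coefficient; here I would use the telescoping $(q;q)_{t_1+k} = (q;q)_k\,(q^{k+1};q)_{t_1}$, valid for all $k,t_1\ge0$.

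Next I would invoke the standard $q$-binomial series (the $q$-analogue of $(1-v)^{-n-1}=\sum_k \binom{n+k}{k}v^k$), namely
\begin{equation*}
	\frac{1}{(v;q)_{n+1}} = \sum_{k=0}^{\infty}\binom{n+k}{k}_q v^k,
	\qquad |v|<1,
\end{equation*}
with $n=t_1$. This is the $\mathstrut_1\phi_0$ evaluation / $q$-binomial theorem specialized so that the numerator parameter equals $q^{-n-1}$; it converges absolutely for $|v|<1$, which holds here since $|w|$ (hence $|v|$) is taken arbitrarily small. Substituting back $v=w/q^{p_1}$ recovers exactly the right-hand side claimed in the lemma.

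There is essentially no obstacle here: the only thing to be careful about is bookkeeping of which $q$-Pochhammer identity is being used and the radius of convergence, both of which are automatic in the regime at hand (small $|w|$, $q\in(0,1)$ fixed). If one prefers to avoid quoting the $q$-binomial series, an alternative is a short induction on $t_1$: for $t_1=0$ the identity is the geometric series $\sum_k v^k = (1-v)^{-1}$, and the inductive step uses $(q^{k+1};q)_{t_1} = (1-q^{k+1})(q^{k+2};q)_{t_1-1}$ together with reindexing to relate the sum for $t_1$ to that for $t_1-1$; the algebra is routine and matches the product formula on the right. Either route closes the lemma.
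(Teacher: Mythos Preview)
Your proof is correct and follows essentially the same approach as the paper: both pull out $(w/q^{p_1})^{t_1+1}(q;q)_{t_1}$, rewrite $(q^{k+1};q)_{t_1}/(q;q)_{t_1}=\binom{t_1+k}{k}_q$, and then invoke the $q$-binomial series $\sum_{k\ge0}\binom{t_1+k}{k}_q v^k=(v;q)_{t_1+1}^{-1}$ for $|v|<1$.
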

\begin{proof}
	We have
	\begin{equation*}
		\begin{split}
		\sum_{k=0}^\infty (w q^{-p_1})^{t_1+1+k} (q^{k+1},q)_{t_1} 
		&= ( w q^{-p_1})^{t_1 + 1} (q;q)_{t_1} \sum_{k=0}^\infty (w q^{-p_1})^{k} \frac{(q,q)_{k+t_1}}{(q;q)_k (q;q)_{t_1}}\\
		&= ( w q^{-p_1})^{t_1 + 1}  \frac{(q;q)_{t_1}}{(w q^{-p_1}; q)_{t_1+1}},
		\end{split}
	\end{equation*}
	where we used the $q$-binomial theorem, and the series converges because
	$|w|$ is small.
\end{proof}

Putting together the formula \eqref{eq:K_rewrite_3} for the kernel $K_{\mathrm{loz}}$ and the 
results of the current \Cref{sub:hyp_limit_for_kernel},
we arrive at a $N\to+\infty$ limit of the kernel $K_{\mathrm{loz}}$. 
Denote
\begin{equation}
	\label{eq:K_loz_lim_defn}
	\begin{split}
		K_{\mathrm{loz}}^{\mathrm{lim}}(p_1,t_1;p_2,t_2)
		&\coloneqq
		\mathbf{1}_{t_2>t_1}\mathbf{1}_{p_2>p_1}
		\frac{q^{-t_2(p_1-p_2)} (q^{p_1-p_2+1};q)_{t_2-t_1-1}}{(q;q)_{t_2-t_1-1}}
		+\frac{q^{p_2t_2-p_1t_1-p_1}}{(2\pi\mathbf{i})^2}
		\oiint
		dz\ssp dw\ssp
		\frac{z^{-t_2}w^{t_1} }{w-z}
		\\&\hspace{65pt}\times
		\frac{(q;q)_{t_1}}{(wq^{-p_1};q)_{t_1+1}}
		\frac{(zq^{1-p_2};q)_{t_2-1}}{(q;q)_{t_2-1}}
		\frac{(w^{-1};q)_{\infty}}{(z^{-1};q)_{\infty}}
		\prod_{r=1}^{m}\frac{1-q^{x_r}/z}{1-q^{x_r}/w},
	\end{split}
\end{equation}
where 
the $w$ contour is an arbitrarily small 
circle around $0$, and the $z$ contour goes around $q^{p_2},q^{p_2+1},q^{p_2+2},\ldots $, $0$, the $w$ contour, and 
encircles
no other $z$ poles of the integrand.

The next proposition follows directly from the previous computations.
\begin{proposition}
	\label{prop:limit_of_K_loz_initial}
	For any fixed $t_1\ge0$, $t_2>0$, $p_1,p_2\in \mathbb{Z}$, we have
	\begin{equation*}
		\lim_{N\to+\infty}
		q^{N(p_2-p_1)} K_{\mathrm{loz}}(p_1,N-t_1;p_2,N-t_2)=
		K_{\mathrm{loz}}^{\mathrm{lim}}(p_1,t_1;p_2,t_2).
	\end{equation*}
\end{proposition}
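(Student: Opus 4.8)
The plan is to assemble the pieces already established in the three preceding subsections; the statement is a bookkeeping consequence of the rewriting \eqref{eq:K_rewrite_3} (which in turn used \Cref{lemma:K_rewrite_residue}), the limit \eqref{eq:limit_of_hyp_in_kernel}, and \Cref{lemma:hyp_limit_second_lemma}. The starting point is formula \eqref{eq:K_rewrite_3}, which already incorporates the gauge factor $q^{N(p_2-p_1)}$, the substitution of the top row $\lambda$ via \eqref{eq:lambda_mu_via_x_y}, and the exchange of the $z$ and $w$ contours. In \eqref{eq:K_rewrite_3} the non-integral summand $\mathbf{1}_{t_2>t_1}\mathbf{1}_{p_2>p_1}q^{-t_2(p_1-p_2)}\frac{(q^{p_1-p_2+1};q)_{t_2-t_1-1}}{(q;q)_{t_2-t_1-1}}$ is already independent of $N$ and coincides verbatim with the corresponding term in \eqref{eq:K_loz_lim_defn}, so it simply carries over. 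Inside the double integral, the only $N$-dependence resides in the factor
\[
Q_N(w)=(q^{N-1};q^{-1})_{t_1}\,q^{-Np_1}w^N\,{}_2\phi_1(q^{-1},q^{N-t_1-1};q^{N-1}\mid q^{-1};w^{-1}q^{p_1})
\]
and in the finite product $\prod_{j=0}^{N+m-1}\frac{1-q^{j}/w}{1-q^{j}/z}$; the remaining factors (the prefactor $z^{-t_2}q^{p_2 t_2}/(w-z)$, the factor $(zq^{1-p_2};q)_{t_2-1}/(q;q)_{t_2-1}$, and the product $\prod_{r=1}^m\frac{1-q^{x_r}/z}{1-q^{x_r}/w}$) are manifestly independent of $N$.

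First I would record the two required limits. By the computation of \Cref{sub:hyp_limit_for_kernel}, namely \eqref{eq:limit_of_hyp_in_kernel} combined with \Cref{lemma:hyp_limit_second_lemma}, one has
\[
\lim_{N\to+\infty}Q_N(w)=w^{t_1+1}q^{-p_1(t_1+1)}\frac{(q;q)_{t_1}}{(wq^{-p_1};q)_{t_1+1}},
\]
and, as noted there, this convergence is uniform for $w$ on the fixed small circle around $0$. Second, since $q\in(0,1)$ the infinite products $(w^{-1};q)_{\infty}$ and $(z^{-1};q)_{\infty}$ converge, and $\prod_{j=0}^{N+m-1}\frac{1-q^{j}/w}{1-q^{j}/z}\to\frac{(w^{-1};q)_{\infty}}{(z^{-1};q)_{\infty}}$ uniformly in $(z,w)$ on the integration contours, because those contours are compact and bounded away from $0$, so that the omitted tails $\prod_{j\ge N+m}(1-q^{j}/w)$ and $\prod_{j\ge N+m}(1-q^{j}/z)$ tend to $1$ uniformly. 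Here it matters that the contours in \eqref{eq:K_rewrite_3} can be chosen independently of $N$: increasing $N$ only adds $z$-poles $q^{j}$ with $j\ge N+m$, which lie near $0$ and hence inside the fixed $z$ contour, so its prescription (encircling $q^{p_2},q^{p_2+1},\ldots$ and $0$ and nothing else) remains valid.

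With the contours fixed and the two $N$-dependent factors converging uniformly on them — the remaining factors being $N$-independent and bounded on the compact contours (the $w$ contour is taken small enough that $1-q^{x_r}/w\ne0$, and the two contours are disjoint so $w-z$ is bounded away from $0$) — one may interchange $\lim_{N\to+\infty}$ with the double contour integral. Collecting factors in the resulting integrand, the $1/w$ from the measure $dz\,dw/w$ absorbs $w^{t_1+1}$ from $\lim Q_N(w)$ into $w^{t_1}$, and the $q$-powers combine as $q^{p_2 t_2}\cdot q^{-p_1(t_1+1)}=q^{p_2 t_2-p_1 t_1-p_1}$, reproducing precisely the prefactor, the ratio $z^{-t_2}w^{t_1}/(w-z)$, and the factor $(q;q)_{t_1}/(wq^{-p_1};q)_{t_1+1}$ of \eqref{eq:K_loz_lim_defn}, while $\prod_{j=0}^{N+m-1}\frac{1-q^{j}/w}{1-q^{j}/z}$ becomes $(w^{-1};q)_{\infty}/(z^{-1};q)_{\infty}$; adding the unchanged non-integral summand yields exactly $K_{\mathrm{loz}}^{\mathrm{lim}}(p_1,t_1;p_2,t_2)$. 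I expect the only genuine (though mild) point requiring care to be the justification of this interchange of limit and integral, i.e., the uniform convergence statements on the fixed contours; once those are in hand, the proposition follows directly from the earlier computations.
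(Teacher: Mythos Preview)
Your proposal is correct and follows exactly the approach implicit in the paper, which simply states that the proposition ``follows directly from the previous computations.'' You have made explicit precisely the bookkeeping that the paper leaves to the reader: the non-integral summand is $N$-independent, $Q_N(w)$ and the truncated product are the only $N$-dependent factors, both converge uniformly on the fixed contours by \eqref{eq:limit_of_hyp_in_kernel}, \Cref{lemma:hyp_limit_second_lemma}, and the convergence of $(w^{-1};q)_\infty$, $(z^{-1};q)_\infty$, and the factors then collect into \eqref{eq:K_loz_lim_defn}.
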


\subsection{Particle-hole involution and time shift}

We are now in a position to 
derive 
\Cref{thm:det_kernel_for_walks_intro} 
from the limit transition of \Cref{prop:limit_of_K_loz_initial}.
Define
\begin{equation}
	\label{eq:K_walks}
	K_{\mathrm{walks}}(y_1,t_1;y_2,t_2)
	\coloneqq
	\mathbf{1}_{t_1=t_2}\mathbf{1}_{y_1=y_2}-
	q^{t_1(t_1+y_1)-t_2(t_2+y_2)}
	K_{\mathrm{loz}}^{\mathrm{lim}}(y_1+t_1,t_1;y_2+t_2,t_2).
\end{equation}
Observe that we performed two transformations to get $K_{\mathrm{walks}}$
from $K_{\mathrm{loz}}^{\mathrm{lim}}$ in \eqref{eq:K_walks}:
\begin{enumerate}[$\bullet$]
	\item First, the point process defined by the noncolliding walks
		(formed by the solid dots in \Cref{fig:lozenge_and_schemes})
		is the complement of the process defined by the particles $p^n_i$. 
		Therefore, by the 
		Kerov's complementation principle (see, for example,
		\cite[Appendix A.3]{borodin2000b}),
		the kernel for the walks is the 
		identity minus the kernel for the lozenges.
	\item Second, 
		the shifting of the variables $p_i=y_i+t_i$, $i=1,2$,
		corresponds to the passage from the 
		coordinate system $(p,n)$ (where $n=N-t$)
		to the coordinate system $(y,t)$, 
		see \Cref{fig:lozenge_and_schemes}.
\end{enumerate}
Finally, the factor in front of $K_{\mathrm{loz}}^{\mathrm{lim}}$
in \eqref{eq:K_walks} is simply a gauge transformation which
does not change the determinantal process.
One can readily verify that the resulting kernel $K_{\mathrm{walks}}$
\eqref{eq:K_walks}
is the same as
\eqref{eq:K_walks_intro}. This completes
the proof of \Cref{thm:det_kernel_for_walks_intro}.

\section{Asymptotic analysis}
\label{sec:asymptotic_analysis}

In this section, we perform the bulk asymptotic analysis of the
correlation kernel $K_{\mathrm{walks}}$ 
\eqref{eq:K_walks_intro}
of the process
$\Upsilon_m$ in the regime as
$q\to1$, $m\to \infty$, and the initial configuration $\vec
x$ forms a finite number of densely packed clusters.  We
make the latter assumption for technical convenience,
see, e.g., Duse--Metcalfe \cite{duse2015asymptotic}, \cite{Duse2015_partII}
for asymptotic results on uniformly random lozenge tilings 
with more general boundaries.
Using the steepest descent method, we prove \Cref{thm:bulk_limit}, that is, obtain the limit shape of the 
trajectories of $\Upsilon_m$, as well as the universal local
fluctuations of the paths which are governed by the incomplete beta kernel
introduced by Okounkov--Reshetikhin
\cite{okounkov2003correlation}.
The latter is a two-dimensional extension of the 
discrete sine kernel
introduced by Borodin--Okounkov--Olshanski \cite{borodin2000b}.

\subsection{Limit regime}
\label{sub:setup_asymptotics}

The limit regime we consider for the kernel
$K_{\mathrm{walks}}(p_1,t_1;p_2,t_2)$ 
\eqref{eq:K_walks_intro}
is as follows:
\begin{equation}
	\label{eq:asymptotic_regime}
	m\to+\infty; 
	\qquad 
	q=e^{-\gamma/m}\nearrow 1;
	\qquad 
	t_2=\lfloor m\tau \rfloor ,\quad t_1=t_2+\Delta t;
	\qquad 
	p_2=\lfloor m\rho \rfloor ,\quad p_1=p_2+\Delta p.
\end{equation}
Here $(\tau,\rho)\in \mathbb{R}^{2}_{\ge0}$, $\gamma>0$,
and the quantities $\Delta t=t_1-t_2,\Delta p=p_1-p_2\in \mathbb{Z}$
are fixed.
The regime with fixed differences $\Delta t,\Delta p$ is called \emph{bulk limit},
and it describes local correlations around the 
global point of observation $(\tau,\rho)\in \mathbb{R}^{2}_{\ge0}$.
Finally, we assume that the initial configuration
$\vec x\in \mathbb{W}_m$ scales as follows:
\begin{equation}
	\label{eq:scaling_initial_conf}
	x_i=\lfloor  m\ssp g\left( i/m  \right) \rfloor , 
	\quad 1\le i\le m;\qquad 
	g(u)=\sum_{i=1}^{L}(u+C_i)\mathbf{1}_{u \in [a_i,a_{i+1})},
\end{equation}
where $L\ge1$ is fixed (this is the number of clusters of densely
packed particles in $\vec x$), and
\begin{equation}
	\label{eq:a_C_relation}
	0<C_1<C_2<\ldots<C_L ,\qquad 
	0=a_1<a_2<\ldots<a_{L+1}=1 
\end{equation}
are the parameters of the clusters, and $g(u)$ is weakly
increasing with derivative $0$ or $1$.

We apply the standard steepest descent approach
outlined in \cite[Section 3]{Okounkov2002}. For this, 
we first rewrite the integrand in the double integral in
$K_{\mathrm{walks}}(p_1,t_1;p_2,t_2)$
\eqref{eq:K_walks_intro} as 
\begin{equation*}
	\begin{split}
		&
		-\frac{q^{-t_1-p_1}}{(2\pi\mathbf{i})^2}
		\frac{z^{-t_2}w^{t_1} }{w-z}
		\frac{(q;q)_{t_1}}{(wq^{-p_1-t_1};q)_{t_1+1}}
		\frac{(zq^{1-p_2-t_2};q)_{t_2-1}}{(q;q)_{t_2-1}}
		\frac{(w^{-1};q)_{\infty}}{(z^{-1};q)_{\infty}}
		\prod_{r=1}^{m}\frac{1-q^{x_r}/z}{1-q^{x_r}/w}
		\\
		&\hspace{2pt}
		=
		-
		\frac{q^{-t_1-p_1}}{(2\pi\mathbf{i})^2}
		\frac{(q;q)_{t_1}}{(q;q)_{t_2-1}}
		\frac{1}{(w-z)(1-zq^{-p_2})(1-zq^{-p_2-t_2})}\ssp
		\exp\left\{m \left(  
		S_m(w;t_1,p_1)-S_m(z;t_2,p_2) 
		\right) 
		\right\},
	\end{split}
\end{equation*}
where 
\begin{equation}
	\label{eq:def_S_m_new}
	\begin{split}
		S_m(w;t,p)&\coloneqq\frac{t}{m}\log w
		-\frac{1}{m}\sum_{i=0}^{t}\log (1-w q^{-p-t+i})
		\\&\hspace{90pt}
		+
		\frac{1}{m}\sum_{i=0}^{\infty}\log(1-w^{-1}q^{i})
		-\frac{1}{m}\sum_{r=1}^{m}\log(1-w^{-1}q^{x_r}).
	\end{split}
\end{equation}
Here we can take any branches of the logarithms 
so that $S_m$ is holomorphic in $w$ belonging to the upper half
complex plane. Indeed, any branches
taken the same in $S_m(w;t_1,p_1)$ and $S_m(z;t_2,p_2)$
produce the same
signs in the exponent in the integrand.

Using \eqref{eq:asymptotic_regime}--\eqref{eq:scaling_initial_conf},
let us define the 
limiting version of the function
$S_m$:
\begin{equation}
	\label{eq:def_S_new}
	\begin{split}
		S(w;\tau,\rho)&\coloneqq
		\tau \log w
		-
		\int_{0}^{\tau}
		\log\left( 1-w e^{\gamma (\tau+\rho-u)} \right)du
		\\&\hspace{90pt}
		+\int_0^\infty\log\left( 1-w^{-1}e^{-\gamma u} \right)du
		-\int_0^1 \log\left( 1-w^{-1}e^{-\gamma g(u)} \right)du.
	\end{split}
\end{equation}

\begin{lemma}
	\label{lemma:S_convergence}
	We have $S_m(w;\lfloor m \tau  \rfloor ,\lfloor m \rho  \rfloor )=S(w;\tau,\rho)+O(m^{-1})$ as $m\to +\infty$,
	uniformly for $w$
	and $(\tau,\rho)$
	belonging to compact subsets of $\left\{ w\colon \Im w>0 \right\}$
	and $\mathbb{R}_{\ge0}^2$, respectively.
\end{lemma}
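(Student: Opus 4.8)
The plan is to show that each of the four terms in the definition~\eqref{eq:def_S_m_new} of $S_m$ converges to the corresponding term in~\eqref{eq:def_S_new} with an $O(m^{-1})$ error, uniformly on compact sets. First I would dispose of the easy term: $\frac{t}{m}\log w = \frac{\lfloor m\tau\rfloor}{m}\log w = \tau\log w + O(m^{-1})$, with the error bound obviously uniform on compacta once we fix a branch of $\log$ holomorphic on the upper half-plane. The other three terms are sums that should be recognized as Riemann sums for the integrals appearing in $S$, so the core of the argument is a quantitative Riemann-sum estimate of the form $\frac1m\sum_{i} f(i/m) = \int f + O(m^{-1})$, valid when $f$ (and its relevant derivative) is bounded; the subtlety is that here $f$ also depends on the parameters $q=e^{-\gamma/m}$, $\tau$, $\rho$, $w$, and on the step-function data $g$, so one must track that the constants in the error are uniform.

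For the finite sum $\frac1m\sum_{i=0}^{t}\log(1-wq^{-p-t+i})$ I would substitute $q=e^{-\gamma/m}$, $t=\lfloor m\tau\rfloor$, $p=\lfloor m\rho\rfloor$, and $i = mu$, so that $q^{-p-t+i}$ becomes $e^{(\gamma/m)(p+t-i)} = e^{\gamma(\rho+\tau-u)}\bigl(1+O(m^{-1})\bigr)$ uniformly for $u\in[0,\tau]$; the sum is then a Riemann sum with mesh $1/m$ for $\int_0^\tau \log(1-we^{\gamma(\tau+\rho-u)})\,du$. Because $\Im w$ is bounded away from $0$ and $\tau,\rho$ range over a compact set, the argument of the logarithm stays in a fixed compact subset of $\mathbb{C}\setminus(-\infty,0]$, so $\log$ and its $u$-derivative are bounded there, giving the $O(m^{-1})$ bound uniformly. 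The term $\frac1m\sum_{i=0}^\infty\log(1-w^{-1}q^i)$ is an infinite sum, so in addition to the Riemann-sum comparison on a window $i\le m^{1-\delta}$ (say) one must bound the tail: since $|w^{-1}|$ is bounded and $q^i = e^{-\gamma i/m}$, for $i\ge m^{1-\delta}$ the summand is $O(e^{-\gamma m^{-\delta}})$-ish and the whole tail sum, compared with $\int_{m^{-\delta}}^\infty\log(1-w^{-1}e^{-\gamma u})\,du$, contributes $o(m^{-1})$; I would organize this as: Riemann sum on the bulk plus exponentially small tail on both the discrete and continuous sides.

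The term with the initial data, $\frac1m\sum_{r=1}^m\log(1-w^{-1}q^{x_r})$ with $x_r=\lfloor m\,g(r/m)\rfloor$, is handled the same way: $q^{x_r} = e^{-\gamma g(r/m)}\bigl(1+O(m^{-1})\bigr)$ uniformly in $r$ since $g$ is bounded on $[0,1]$, so this is a Riemann sum for $\int_0^1\log(1-w^{-1}e^{-\gamma g(u)})\,du$. The only delicate point is that $g$ is only piecewise continuous (it is the step function~\eqref{eq:scaling_initial_conf} with jumps at the $a_k$), so the integrand $u\mapsto\log(1-w^{-1}e^{-\gamma g(u)})$ has finitely many jump discontinuities; a Riemann sum for a bounded function with finitely many discontinuities still converges to the integral, and the error is $O(1/m)$ from the $L$ intervals of smoothness plus $O(L/m)$ from the at most $L$ mesh cells straddling a jump — hence $O(m^{-1})$ with a constant depending only on $L$ and on the (compact) range of $w$. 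I expect this interaction between the floor functions, the $q$-to-exponential expansion, and the discontinuities of $g$ to be the main bookkeeping obstacle; conceptually nothing is hard, but one must be careful that all error constants are uniform over the stated compact sets and do not secretly blow up as $\Im w\to 0$ or as $(\tau,\rho)$ approaches the boundary of its compact set (which it does not, by hypothesis). Assembling the four estimates and using that a fixed choice of logarithm branches makes $S_m$ and $S$ holomorphic in $w$ on the upper half-plane completes the proof.
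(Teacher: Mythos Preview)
Your approach is the paper's own: each term of $S_m$ is a Riemann sum for the corresponding integral in $S$, and the integrands are piecewise $C^1$ in $u$ with norms uniformly bounded once $w$ stays in a compact subset of the upper half-plane and $(\tau,\rho)$ in a compact subset of $\mathbb{R}_{\ge0}^2$. The paper dispatches this in one sentence; you are supplying the bookkeeping.

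One concrete slip in your write-up: for the infinite sum you split at $i=m^{1-\delta}$, but then $u=i/m\ge m^{-\delta}\to 0$, so your ``tail'' is essentially the whole integral and is certainly not $o(m^{-1})$; the bound $O(e^{-\gamma m^{-\delta}})$ on the summand is a fixed constant near $e^{-\gamma}$, not something small. You presumably meant to split at $i=mM$ with $M\to\infty$ (e.g.\ $M=\log m$), so that $q^i\le e^{-\gamma M}$ and both the discrete and continuous tails are $O(e^{-\gamma M})$. Alternatively, and more cleanly, note that $f(u)=\log(1-w^{-1}e^{-\gamma u})$ satisfies $|f'(u)|\le Ce^{-\gamma u}$ uniformly for $w$ in your compact set, so the total Riemann-sum error on $[0,\infty)$ is bounded by $m^{-1}\int_0^\infty|f'(u)|\,du=O(m^{-1})$ directly, with no splitting needed.
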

\begin{proof}
	This follows from the convergence of the Riemann
	sums in \eqref{eq:def_S_m_new} 
	to the corresponding integrals in \eqref{eq:def_S_new},
	as the integrands are piecewise $C^1$ functions in $u$
	with norms uniformly bounded in $w,\tau,\rho$ belonging to compact subsets of their respective domains.
\end{proof}

Integrals in \eqref{eq:def_S_new}
can be expressed through the dilogarithm function
which has the series and the integral representations
\begin{equation}
	\label{eq:Li2}
	\mathrm{Li}_2(\xi)=\sum_{k=1}^{\infty}\frac{\xi^k}{k^2}=
	-\eta \int_0^\infty \log\left( 1-\xi e^{-\eta u} \right)du.
\end{equation}
The series converges for $|\xi|<1$, and 
the integral 
representation (valid for any $\eta>0$, but we will mostly use it with $\eta=\gamma$) follows a certain branch of the logarithm.
For example, we may choose a branch of 
$\mathrm{Li}_2(\xi)$ to have cut at $\xi\in \mathbb{R}_{\ge1}$.
We have
\begin{equation}
	\label{eq:Li2_derivatives}
	\mathrm{Li}_2'(\xi)=-\frac{\log(1-\xi)}{\xi},
	\qquad 
	\frac{\partial}{\partial \xi}\ssp\xi\ssp
	\frac{\partial}{\partial \xi}\ssp
	\mathrm{Li}_2(\xi)=\frac{1}{1-\xi}.
\end{equation}
With this notation and using \eqref{eq:scaling_initial_conf}, we have
\begin{equation}
	\label{eq:S_via_Li2}
	\begin{split}
		S(w;\tau,\rho)&=
		\tau\log w
		-
		\gamma^{-1}
		\mathrm{Li}_2( we^{\gamma\rho} )
		+
		\gamma^{-1}
		\mathrm{Li}_2\bigl( we^{\gamma(\rho+\tau)} \bigr)
		\\&\hspace{30pt}-
		\gamma^{-1}
		\mathrm{Li}_2( w^{-1} )
		+
		\gamma^{-1}
		\sum_{i=1}^L
		\left[ 
			\mathrm{Li}_2\bigl(w^{-1} e^{-\gamma(a_i+C_i)}\bigr)-
			\mathrm{Li}_2\bigl(w^{-1} e^{-\gamma(a_{i+1}+C_i)}\bigr)
		\right].
	\end{split}
\end{equation}

\subsection{Critical points and the frozen boundary}
\label{sub:crit_pt}

Let us count the critical points of 
$S(w;\tau,\rho)$
in the complex upper half-plane.
Recall that $w$ is a critical point if, by definition,
$S'(w;\tau,\rho)=0$, where the derivative is taken in $w$.
By looking at
$e^{\gamma w S'(w;\tau,\rho)}$, we see that the critical points
must satisfy the following algebraic equation:
\begin{equation}
	\label{eq:critical_points_equation_exponentiated}
	\frac{w\ssp e^{\gamma (\tau+1)}}{w-1}\cdot\frac{w e^{\gamma\rho}-1}{w e^{\gamma(\rho+\tau)}-1}
	\prod_{i=1}^{L}\frac{w\ssp e^{\gamma(a_i+C_i)}-1}{w\ssp e^{\gamma(a_{i+1}+C_i)}-1}
	=1.
\end{equation}

\begin{lemma}
	\label{lemma:critical_points_count_upper_half}
	For any $\tau,\rho>0$,
	equation
	\eqref{eq:critical_points_equation_exponentiated}
	has at most one non-real root in $w$ in the complex upper half-plane.
\end{lemma}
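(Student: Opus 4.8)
The plan is to reduce the question to counting sign changes (or, after clearing denominators, real roots with appropriate multiplicity) of a single real rational function along the real axis, and then invoke an argument-principle / conjugate-pairs count. Rewrite \eqref{eq:critical_points_equation_exponentiated} as $P(w)=Q(w)$ for two polynomials with real coefficients, where
\[
Q(w)=(w-1)\prod_{i=1}^{L}\bigl(w\,e^{\gamma(a_{i+1}+C_i)}-1\bigr)\bigl(we^{\gamma(\rho+\tau)}-1\bigr),
\qquad
P(w)=w\,e^{\gamma(\tau+1)}\bigl(we^{\gamma\rho}-1\bigr)\prod_{i=1}^{L}\bigl(w e^{\gamma(a_i+C_i)}-1\bigr).
\]
Both are polynomials of degree $L+2$, and the top-degree coefficients differ (the coefficient of $w^{L+2}$ in $P-Q$ is $e^{\gamma(\tau+1)}e^{\gamma\rho}\prod e^{\gamma(a_i+C_i)}-e^{\gamma(\rho+\tau)}\prod e^{\gamma(a_{i+1}+C_i)}$, which is nonzero for generic—and in fact for all—choices of the parameters because $\sum(a_i+C_i)+\tau+1+\rho\ne\sum(a_{i+1}+C_i)+\rho+\tau$ is equivalent to $\sum a_i+1\ne\sum a_{i+1}$, i.e. $a_1\ne a_{L+1}$, true since $0\ne 1$). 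So $P-Q$ is a genuine polynomial of degree $L+2$ with real coefficients, and its non-real roots come in conjugate pairs; thus it suffices to show it has at least $L$ real roots counted with multiplicity. Equivalently, I would show the real rational function $R(w):=P(w)/Q(w)-1$ has at least $L$ real zeros.

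The key step is a careful interlacing/sign-count of the poles and zeros of $P/Q$ on the real line. The poles of $P/Q$ are the real numbers $w=e^{-\gamma(a_{i+1}+C_i)}$ for $i=1,\dots,L$, together with $w=1$ and $w=e^{-\gamma(\rho+\tau)}$; its zeros (other than $w=0$) are $w=e^{-\gamma\rho}$ and $w=e^{-\gamma(a_i+C_i)}$ for $i=1,\dots,L$. Using the ordering $C_1<\dots<C_L$ and $a_1<\dots<a_{L+1}$, and the monotonicity of $u\mapsto e^{-\gamma u}$, I would sort all these points along $(0,\infty)$ and track the sign of $P/Q-1$ as $w$ increases from $0^+$ to $+\infty$ and also along $(-\infty,0)$. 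Each time $P/Q$ passes through a simple pole it changes sign, and between consecutive poles it is continuous; a bookkeeping of these sign changes, together with the behavior $P/Q\to(\text{nonzero finite or }\pm\infty)$ at the ends, forces at least $L$ crossings of the value $1$. (The precise count is where the hypotheses $C_i<C_{i+1}$, $a_i<a_{i+1}$, and $\tau,\rho>0$ get used: they pin down the cyclic order of the points $e^{-\gamma(a_i+C_i)}$, $e^{-\gamma(a_{i+1}+C_i)}$, $e^{-\gamma\rho}$, $e^{-\gamma(\rho+\tau)}$, $1$ so that the intervals between poles are alternately "crossed" and "not crossed" in a way that yields the bound.) Having produced $L$ real roots of $P-Q$, the remaining at most $2$ roots are either both real or a single conjugate pair, giving at most one non-real root in the closed upper half-plane.

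The main obstacle I anticipate is making the interlacing argument airtight in full generality, since the relative order of the "moving" points $e^{-\gamma\rho}$ and $e^{-\gamma(\rho+\tau)}$ with respect to the "fixed" points $e^{-\gamma(a_i+C_i)}$ and $e^{-\gamma(a_{i+1}+C_i)}$ depends on $\tau,\rho$ and can vary; one must check that the sign-count lower bound of $L$ crossings is uniform across all these chamber configurations. A clean way to organize this is to note that $e^{\gamma w S'(w;\tau,\rho)}$ has a partial-fraction form in which each factor $\frac{w e^{\gamma b}-1}{w e^{\gamma b'}-1}$ with $b<b'$ contributes a definite monotone piece, so that $w S'(w;\tau,\rho)$ itself—not just the exponentiated equation—is, on each real interval between its poles, a monotone (or at worst unimodal) function of $w$; monotonicity on each interval immediately caps the number of real critical points there by $1$, and summing over the $L+2$ intervals, then subtracting the "spurious" ones (at $w=0$ and at the pole $w=1$ which the original $S'$ does not actually see), yields exactly the count needed. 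I would pursue this monotonicity-of-$wS'$ route as the cleanest, with the conjugate-pairs observation supplying the final "at most one in the upper half-plane" conclusion.
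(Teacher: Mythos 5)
Your overall strategy is the same as the paper's: both arguments clear denominators, observe that non-real roots of the resulting real polynomial come in conjugate pairs, and then produce enough real roots by an interlacing/sign-change count along $\mathbb{R}$ (the paper phrases this as counting intersections of the graphs of the numerator polynomial $p_{num}$ and the denominator polynomial $p_{den}$, whose roots partially interlace). However, as written your proposal has a genuine gap: the case analysis that actually establishes the lower bound on the number of real roots --- uniformly over the possible positions of $e^{-\gamma\rho}$ and $e^{-\gamma(\rho+\tau)}$ relative to the fixed points $e^{-\gamma(a_i+C_i)}$, $e^{-\gamma(a_{i+1}+C_i)}$ --- is exactly the substance of the proof, and you defer it, explicitly flagging it as ``the main obstacle.'' The paper resolves it by noting that the cluster ordering \eqref{eq:a_C_relation} forces the interlacing $w^i_{den}<w^i_{num}<w^{i-1}_{den}$ and then checking the few ways the two ``moving'' roots can break this interlacing, finding either $L+1$ or $L-1$ guaranteed real intersections in every configuration; without that check the lemma is not proved.

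Two further concrete problems. First, your leading-coefficient computation is wrong: since $a_1=0$ and $a_{L+1}=1$, one has $\sum_{i=1}^{L}a_{i+1}=\sum_{i=1}^{L}a_i+1$, so the exponents $\tau+1+\rho+\sum_i(a_i+C_i)$ and $\rho+\tau+\sum_i(a_{i+1}+C_i)$ are \emph{equal}; the top coefficients of $P$ and $Q$ coincide and $P-Q$ has degree at most $L+1$, not exactly $L+2$. (This only helps the root count --- the paper accordingly says ``degree at most $L+2$'' --- but it changes the endpoint behavior your sign count relies on, since $P/Q\to1$ rather than to a value distinct from $1$ as $w\to\pm\infty$.) Second, your proposed ``cleanest'' fallback via monotonicity of $wS'$ on each interval between poles is unjustified: $\gamma(wS')'$ is a sum of terms $\frac{c_1-c_2}{(w-c_1)(w-c_2)}$ whose signs on a given interval can disagree once the interlacing is broken, so the sum need not be monotone there, and ``unimodal'' would permit two critical points per interval, which does not yield the required bound. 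You would have to fall back on the explicit chamber-by-chamber count, i.e.\ on the argument the paper actually gives.
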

We denote this root in the upper half-plane by $w_c=w_c(\tau,\rho)$.
\begin{proof}[Proof of \Cref{lemma:critical_points_count_upper_half}]
	Denote by 
	$p_{num}(w)$ and
	$p_{den}(w)$
	the polynomials in the denominator and the numerator,
	respectively, in the left-hand side of \eqref{eq:critical_points_equation_exponentiated}.
	Let us first count the real roots of \eqref{eq:critical_points_equation_exponentiated}
	by considering intersections of the graphs of 
	$p_{num}(w)$ and $p_{den}(w)$, $w\in \mathbb{R}$.
	Both polynomials 
	$p_{num}$ and $p_{den}$
	are of degree $L+2$, and have only real roots.
	Since their top degree coefficients
	have the same sign, we may and will
	assume that $p_{num}(-\infty)=p_{den}(-\infty)=+\infty$.
	
	The roots of $p_{num}(w)$ are $0$, $e^{-\gamma \rho}$,
	and $w^{num}_i \coloneqq e^{-\gamma(a_i + C_i)}$, $1\le i\le L$.
	Similarly, $p_{den}(w)$ has 
	roots $1, e^{-\gamma (\rho + \tau)}$, and
	$w^{den}_i \coloneqq e^{-\gamma (a_{i+1}+C_i)}$, $1\le i\le L$.
	By \eqref{eq:scaling_initial_conf}, the roots interlace as
	\begin{equation}
			\label{eq:L_interlacing_roots}
			0 \leq w_{den}^i < w_{num}^i <
			w_{den}^{i-1} \leq 1,
			\qquad 
			2\le i\le L.
	\end{equation} 

	We first discuss two examples illustrating how we count the
	roots. Let us start with $\rho > (a_{L+1}
	+ C_L)$. Then the two leftmost roots of $p_{den}(w)$ are $0$ and
	$e^{-\rho}$, and the two leftmost roots of $p_{num}(w)$ are
	$e^{-\gamma (\rho+\tau)}$ and $e^{-\gamma (a_{L+1} + C_L)}$.
	Moreover, $0<e^{-\gamma (\rho+\tau)}<e^{-\rho}<e^{-\gamma
	(a_{L+1} + C_L)}$.  We see that on each of $L+1$ segments
	between the roots of $p_{num}$, the graph of $p_{num}$
	intersects with the graph of $p_{den}$. This counting
	produces at least $L+1$ real solutions to
	\eqref{eq:critical_points_equation_exponentiated}. Since
	this equation is equivalent to a polynomial equation of
	degree at most $L+2$, it follows that there are no complex
	solutions to
	\eqref{eq:critical_points_equation_exponentiated}. See
	\Cref{fig:roots_case_no_complex} for an illustration.
	
	\begin{figure}[htb]
		\centering
		\includegraphics[width=\textwidth]{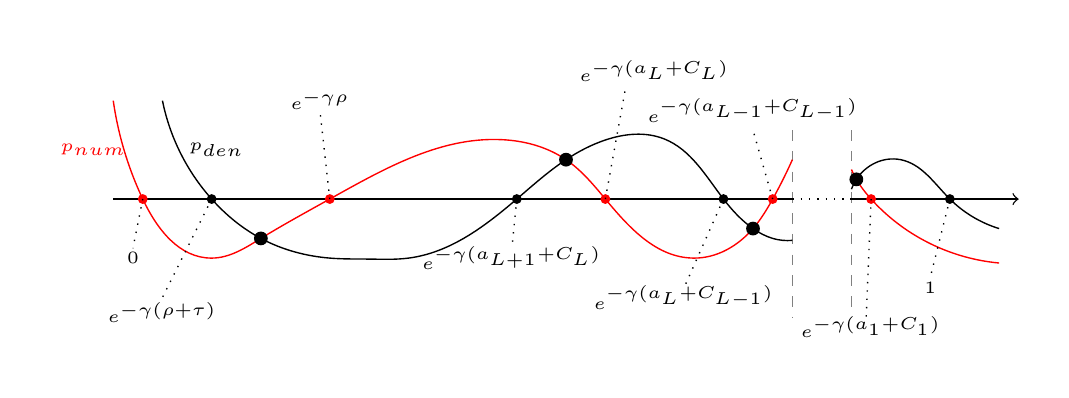}
		\caption{Graphs of $p_{num}$ and $p_{den}$
			for large $\rho$.}
		\label{fig:roots_case_no_complex}
	\end{figure}

	Let us now decrease $\rho$ while keeping $\rho + \tau$
	constant. At some point we will have a repeated root
	$e^{-\gamma \rho} = e^{-\gamma (a_{L+1} + C_L)}$, which upon
	further decreasing $\rho$ breaks the interlacing.  Then we
	can have $L-1$ or $L+1$ roots (counted with multiplicity) at
	the intersections of the graphs of $p_{num}$ and $p_{den}$,
	see \Cref{fig:roots_case_complex_exist} for an illustration.
	When there are $L-1$ intersections,
	\eqref{eq:critical_points_equation_exponentiated} may have a
	single pair of complex conjugate non-real roots. We see that
	there cannot be more than one such pair.

	Now let us describe what happens for $\rho <(a_{L+1} +
	C_L)$.  Recall that $e^{-\gamma (a_{L+1} + C_L)} \le
	e^{-\gamma \rho} \le 1$, so for some $1 \leq i \leq L$ we
	have $w^{den}_i \le e^{-\gamma \rho} \le w^{den}_{i-1}$,
	where we set $w^{den}_{0} = 1$ for convenience.  From
	$\eqref{eq:L_interlacing_roots}$ we also have $w^{den}_i <
	w^{num}_i < w^{den}_{i-1}$, thus we have two roots of the
	numerator, namely $w^{num}_i$ and $e^{-\gamma \rho}$ located
	between two roots of denominator.  Denote the interval
	between $w^{num}_i$ and $e^{-\gamma \rho}$ by $I$, so
	$[0,1]=[0,
	\min(w^{num}_i, e^{-\gamma \rho})]\cup I\cup [\max(w^{num}_i, e^{-\gamma \rho}), 1]$. 
	Note that some of these
	intervals might be empty in the presence of
	double roots of $p_{num}$.
	
	If $e^{-\gamma (\rho + \tau)} \in I$, then the interlacing
	is restored, and we have a structure similar to the first
	case as in \Cref{fig:roots_case_no_complex}. The graphs of
	$p_{num}$ and $p_{den}$	
	intersect $L+1$
	times, which gives at least $L+1$ real critical points, 
	and thus no complex roots exist.

	\begin{figure}[h]
		\centering
		\includegraphics[width=\textwidth]{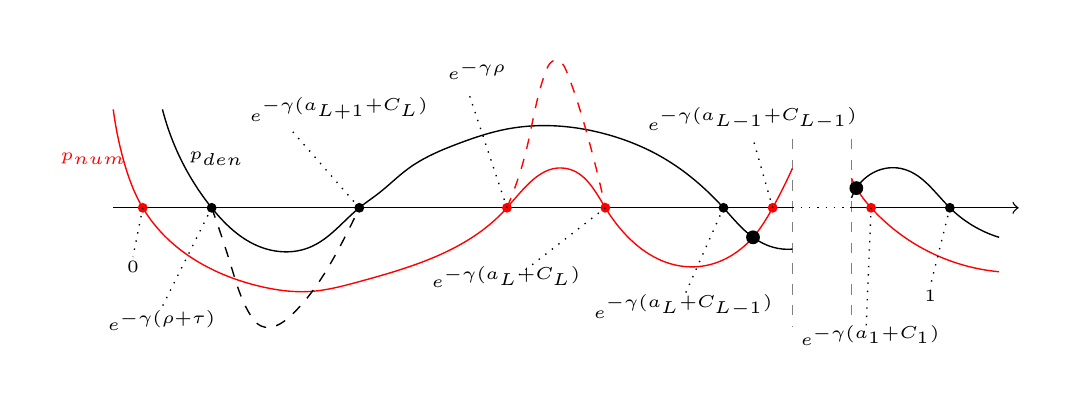}
		\caption{Graphs of $p_{num}$ and $p_{den}$
		for $\rho +\tau > a_{L+1} + c_L > \rho > a_{L}
		+ c_L$.
		Possible dashed graphs lead to 
		nonexistence of complex roots, and 
		solid graphs lead to exactly one complex root in the 
		upper half-plane.}
		\label{fig:roots_case_complex_exist}
	\end{figure}

	On the other hand, if $e^{-\gamma (\rho + \tau)} \notin I$,
	the configuration is similar to
	\Cref{fig:roots_case_complex_exist}, and we have either
	$L-1$ or $L+1$ intersections, leaving the possibility that at most one complex
	root in the upper half-plane exists. This completes the proof.
\end{proof}

\begin{definition}
	\label{def:frozen_boundary}
	Let $\mathcal{D} \subset \mathcal{P}$ be the open set of
	pairs $(\tau, \rho)$, such that $S(w; \tau, \rho)$ defined
	by \eqref{eq:def_S_new}, \eqref{eq:S_via_Li2}
	has one non-real
	critical point $w_c$ in the upper half-plane.
	We call $\mathcal{D}$ \textit{liquid region}, and its
	boundary curve $\partial\mathcal{D}$ the \textit{frozen
	boundary}.
\end{definition}

Let us obtain a parametrization
of the frozen boundary $\partial\mathcal{D}$.
Because the equation \eqref{eq:critical_points_equation_exponentiated}
has real coefficients, 
as $(\tau,\rho)$ approaches $\partial\mathcal{D}$, 
the corresponding critical point $w_c$ becomes close with
its complex conjugate $\overline{w}_c$. At $\partial\mathcal{D}$
these two roots of \eqref{eq:critical_points_equation_exponentiated} 
merge, and thus the frozen boundary is the discriminant curve of the 
equation \eqref{eq:critical_points_equation_exponentiated}. 
We may thus take
$w_c\in \mathbb{R}$ as a parameter of this curve 
$\tau=\tau(w_c)$, $\rho=\rho(w_c)$. 

Denote
\begin{equation*}
	F(w)\coloneqq
	\frac{w}{w-1}
	\prod_{i=1}^{L}\frac{w \ssp e^{\gamma(a_i+C_i)}-1}{w\ssp e^{\gamma(a_{i+1}+C_i)}-1}
	.
\end{equation*}
Then the two equations for the double roots 
of \eqref{eq:critical_points_equation_exponentiated}
yield a rational parametrization
of $\partial\mathcal{D}$
in the exponential coordinates 
$(e^{\gamma\tau},e^{\gamma\rho})$:
\begin{equation}
	\label{eq:rational_parametrization}
	e^{\gamma\tau(w)}=
	\frac{(w F(w))'-e^{-\gamma}}
	{w F'(w)-F(w)+e^{\gamma } F^2(w)}
	,\qquad 
	e^{\gamma\rho(w)}=
	\frac{e^{\gamma } F'(w)}{e^{\gamma } (w F(w))'-1},
	\qquad w\in \mathbb{R}.
\end{equation}
We used this explicit parametrization to
draw the frozen boundaries in 
\Cref{fig:frozen_boundaries} from \Cref{sub:intro_asymptotics}.

\subsection{Analysis of \texorpdfstring{$S(w;\tau,\rho)$}{S}}
\label{sub:analysis_of_S}

In this subsection we assume that $(\tau,\rho)\in
\mathcal{D}$, and investigate the behavior of the steepest
descent contours $\Im S(w; \tau,\rho) = \Im S(w_c; \tau,\rho)$ started from the critical point $w_c$.
In the next
\Cref{sub:deformation_and_sine} we use this information to
deform the original integration contours in
$K_{\mathrm{walks}}$ \eqref{eq:K_walks_intro} to the
steepest descent ones. This will yield
\Cref{thm:bulk_limit}.

First, we consider the behavior of $\Im S(w;\tau,\rho)$ close to the 
real line, that is, $w=v+\mathbf{i}\varepsilon$, 
$v\in \mathbb{R}$,
and
$\varepsilon>0$ is sufficiently small and fixed.
In $\log w$ and $\mathrm{Li}_2(w)$ entering \eqref{eq:S_via_Li2}
we choose the standard branch of the logarithm which has branch cut along the 
negative real line. Using the integral representation in \eqref{eq:Li2},
we see that $\mathrm{Li}_2(\xi)$ has branch cut along $[1,+\infty)$.

\begin{lemma}
	\label{lemma:ImS_behaviour_close_to_real_line}
	For sufficiently small fixed $\varepsilon>0$, the graph of the function 
	$v\mapsto \Im S(v+\mathbf{i}\varepsilon;\tau,\rho)$,
	$v\in \mathbb{R}$,
	has at most four intersections with any horizontal line.
	If there are four intersections, then the 
	leftmost of these intersections is 
	in a small left neighborhood of zero, and 
	goes to $0$ as $\varepsilon\to 0$.
\end{lemma}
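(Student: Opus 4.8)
The plan is to bound the number of local extrema of $v\mapsto\Im S(v+\mathbf{i}\varepsilon;\tau,\rho)$, $v\in\mathbb{R}$, by three: a real-analytic function with at most $k$ local extrema meets any horizontal line in at most $k+1$ points, and the ``extra'' intersection will be located by identifying the monotone branches. Since $\partial_v\Im S(v+\mathbf{i}\varepsilon)=\Im S'(v+\mathbf{i}\varepsilon)$, this reduces to counting the sign changes of $v\mapsto\Im S'(v+\mathbf{i}\varepsilon)$.

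First I compute the boundary values as $\varepsilon\to 0^+$. Differentiating \eqref{eq:S_via_Li2} with $\mathrm{Li}_2'(\xi)=-\log(1-\xi)/\xi$ gives $\gamma wS'(w)=\log R(w)$, where $R(w)$ is the rational function appearing on the left-hand side of \eqref{eq:critical_points_equation_exponentiated} (so that $R(w)=1$ is the critical-point equation); it has $R(\pm\infty)=1$, a simple zero at $w=0$, and all its remaining zeros and poles in $(0,1]$, interlacing as forced by \eqref{eq:a_C_relation} apart from the insertion of the pole $e^{-\gamma(\rho+\tau)}$ and the zero $e^{-\gamma\rho}$. Tracking the jumps of $\arg R(v+\mathbf{i}0)$ (decreasing by $\pi$ across each zero of $R$, increasing by $\pi$ across each pole) from $\arg R(-\infty+\mathbf{i}0)=0$, one obtains $\Im S'(v+\mathbf{i}0)=\frac{\pi}{\gamma v}\,\nu(v)$, where $\nu(v)=\#\{\text{poles}<v\}-\#\{\text{zeros}<v\}$. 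Hence $\Im S'(v+\mathbf{i}0)\equiv 0$ for $v<0$; for $v>0$ one has $\nu(v)\in\{-1,0,1\}$ with $\nu=-1$ just right of $0$ and just left of $1$, $\nu=0$ for $v>1$, $\nu=+1$ precisely on $(e^{-\gamma(\rho+\tau)},e^{-\gamma\rho})\cap\bigcup_i(e^{-\gamma(a_{i+1}+C_i)},e^{-\gamma(a_i+C_i)})$, and $\nu\le 0$ outside $(e^{-\gamma(\rho+\tau)},e^{-\gamma\rho})$, so $\nu$ changes sign at most twice on $(0,\infty)$. Consequently, for $\varepsilon$ small, $\Im S(\cdot+\mathbf{i}\varepsilon)$ has at most one local minimum and one local maximum on any fixed $\{v\ge\delta\}$ (here I will invoke \Cref{lemma:S_convergence} and the interlacing to discard spurious extrema produced by the $\varepsilon$-smoothing near the branch points $e^{-\gamma\rho},e^{-\gamma(\rho+\tau)},1,e^{-\gamma(a_i+C_i)},e^{-\gamma(a_{i+1}+C_i)}$), and, using $\Im S(v+\mathbf{i}\varepsilon)=\pi\tau+\varepsilon S'(v)+O(\varepsilon^3)$ for $v<0$ together with the hypothesis $(\tau,\rho)\in\mathcal D$, no local extrema on $\{v\le-\delta\}$.

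It remains to analyze the window $|v|\le\delta$, where the boundary values blow up. By the dilogarithm inversion formula one has, near $w=0$, $\gamma S(w)=\tfrac12\log^2(-w)+\gamma(\tau+1)\log(-w)+O(1)$: the $\log^2(-w)$ contributions of $-\gamma^{-1}\mathrm{Li}_2(w^{-1})$ and of the cluster sum in \eqref{eq:S_via_Li2} cancel, leaving only the total weight $\sum_i\gamma(a_{i+1}-a_i)=\gamma$. From this local model, for $w=v+\mathbf{i}\varepsilon$ with $|v|,\varepsilon$ small one finds $\gamma\Im S(v+\mathbf{i}\varepsilon)\approx\arg(-v-\mathbf{i}\varepsilon)\bigl(\tfrac12\log(v^2+\varepsilon^2)+\gamma(\tau+1)\bigr)$, increasing in $v$ up to $v=O(\varepsilon)$ and then matching the boundary behaviour $\Im S(v+\mathbf{i}0)=-\tfrac{\pi}{\gamma}\log v+O(1)$, which decreases in $v$ across the leftmost $\nu\equiv-1$ stretch. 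Since a direct computation (using that $\mathrm{Li}_2$ is real on $(-\infty,1)$ and $\Im\mathrm{Li}_2(\xi+\mathbf{i}0)=\pi\log\xi$ for $\xi>1$) gives $\Im S(v+\mathbf{i}0)=\pi\tau$ for all $v<0$ and also in the limit $v\to+\infty$, the combined picture for $\varepsilon$ small is: $\Im S(\cdot+\mathbf{i}\varepsilon)$ stays near $\pi\tau$ for $v\ll 0$, rises monotonically to a single peak of height $\sim\tfrac1{2\gamma}\log(1/\varepsilon)$ located at $v=O(\varepsilon)$, and then decreases; so the window contributes exactly one further local extremum, this peak. Thus $\Im S(\cdot+\mathbf{i}\varepsilon)$ has at most three local extrema --- the peak near $0$, then a minimum, then a maximum --- with monotone branches (in increasing $v$): increasing from $\approx\pi\tau$ to the peak; decreasing to the minimum; increasing to the maximum; decreasing back to $\approx\pi\tau$. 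A horizontal line at height $c$ is crossed four times precisely when $c$ lies between $\max(\pi\tau,\text{min value})$ and the maximum value; then the first (increasing) branch gives the leftmost crossing, at $v=O(\varepsilon)$, i.e.\ in a small left neighborhood of $0$ that shrinks to $0$ as $\varepsilon\to0$, while the other three branches each give one crossing. This yields both assertions.

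The step I expect to be the main obstacle is making the analysis near $w=0$ and on the negative axis quantitative and uniform in $\varepsilon$: in particular, verifying that the transition region contributes a single peak at distance $O(\varepsilon)$ from $0$, and --- crucially --- that $\Im S(\cdot+\mathbf{i}\varepsilon)$ is genuinely monotone on $(-\infty,0]$ up to the peak, i.e.\ that $S''$ does not change sign on $(-\infty,0)$. This last point is where the condition $(\tau,\rho)\in\mathcal D$ must be used essentially: were an extra wiggle to appear on the negative axis it would create a fifth monotone branch and break the stated bound.
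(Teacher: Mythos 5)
Your proposal is correct and follows essentially the same route as the paper: the paper computes the $\varepsilon\to0$ boundary profile of $\Im S(v+\mathbf{i}\varepsilon)$ directly from $\Im\,\mathrm{Li}_2$ as a piecewise-linear function of $\log v$ (equal to $\pi\tau$ for $v<0$ and for large $v$, with a logarithmic spike at $0$ and a weakly-decreasing/increasing/decreasing pattern on $v>0$), which is exactly the antiderivative of your $\tfrac{\pi}{\gamma v}\nu(v)$ obtained from $\arg R(v+\mathbf{i}0)$. The only difference is this derivative-versus-function packaging, and the subtleties you flag (controlling the $\varepsilon$-smoothing near the branch points and near $w=0$) are treated in the paper at the same level of detail, via the statement that the graph lies in an $O(\varepsilon)$-neighborhood of the limiting profile.
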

See \Cref{fig:Im_graph} for an illustration of the graph of this function.
\begin{proof}[Proof of \Cref{lemma:ImS_behaviour_close_to_real_line}]
	Observe the following behavior of the functions entering \eqref{eq:S_via_Li2}:
	\begin{enumerate}[$\bullet$]
		\item The graph of 
			\begin{equation*}
				v\mapsto \Im(\tau \log(v+\mathbf{i} \varepsilon))=\tau
				\tan^{-1}(\varepsilon/v)+\pi \tau \mathbf{1}_{v<0}
			\end{equation*}
			is in an $O(\varepsilon)$ neighborhood of the graph of the step function
			$v\mapsto \pi\tau\mathbf{1}_{v<0}$ with the added vertical segment from 
			$(0,\pi)$ to $(0,0)$.
		\item 
			The graph of
			$v\mapsto 
				\Im\bigl(
					\mathrm{Li}_2(v+\mathbf{i}\varepsilon) 
			\bigr)$
			is in an $O(\varepsilon)$ neighborhood
			of the graph of the function 
			$\chi_+(v)\coloneqq \pi\log v\cdot \mathbf{1}_{v>1}$. Indeed, this is because
			\begin{equation}
				\label{eq:Im_Li2_computation}
				\begin{split}
					\Im\bigl(\mathrm{Li}_2(v+\mathbf{i} \varepsilon)\bigr)
					&=-\int_0^\infty \mathop{\mathrm{Arg}}
					\left( 1-(v+\mathbf{i}\varepsilon)e^{-u} \right)du
					\\&=
					\pi\int_0^{\infty}\mathbf{1}_{1-ve^{-u}<0}\ssp du+O(\varepsilon)
					\\&=
					\pi \log v\cdot \mathbf{1}_{v>1}+O(\varepsilon).
				\end{split}
			\end{equation}
		\item 
			The graph of $v\mapsto \Im\bigl(
				\mathrm{Li}_2\left( 1 /(v+\mathbf{i}\varepsilon) \right)
			\bigr)$ is in an $O(\varepsilon)$ 
			neighborhood of the graph of the function
			$\chi_-(v)\coloneqq \pi\log v\cdot \mathbf{1}_{0<v<1}$
			with the added vertical line from $(0,0)$ to $(0,-\infty)$.
			This fact is obtained similarly to the expansion 
			\eqref{eq:Im_Li2_computation}.
	\end{enumerate}

	\begin{figure}[htpb]
		\centering
		\includegraphics[width=.6\textwidth]{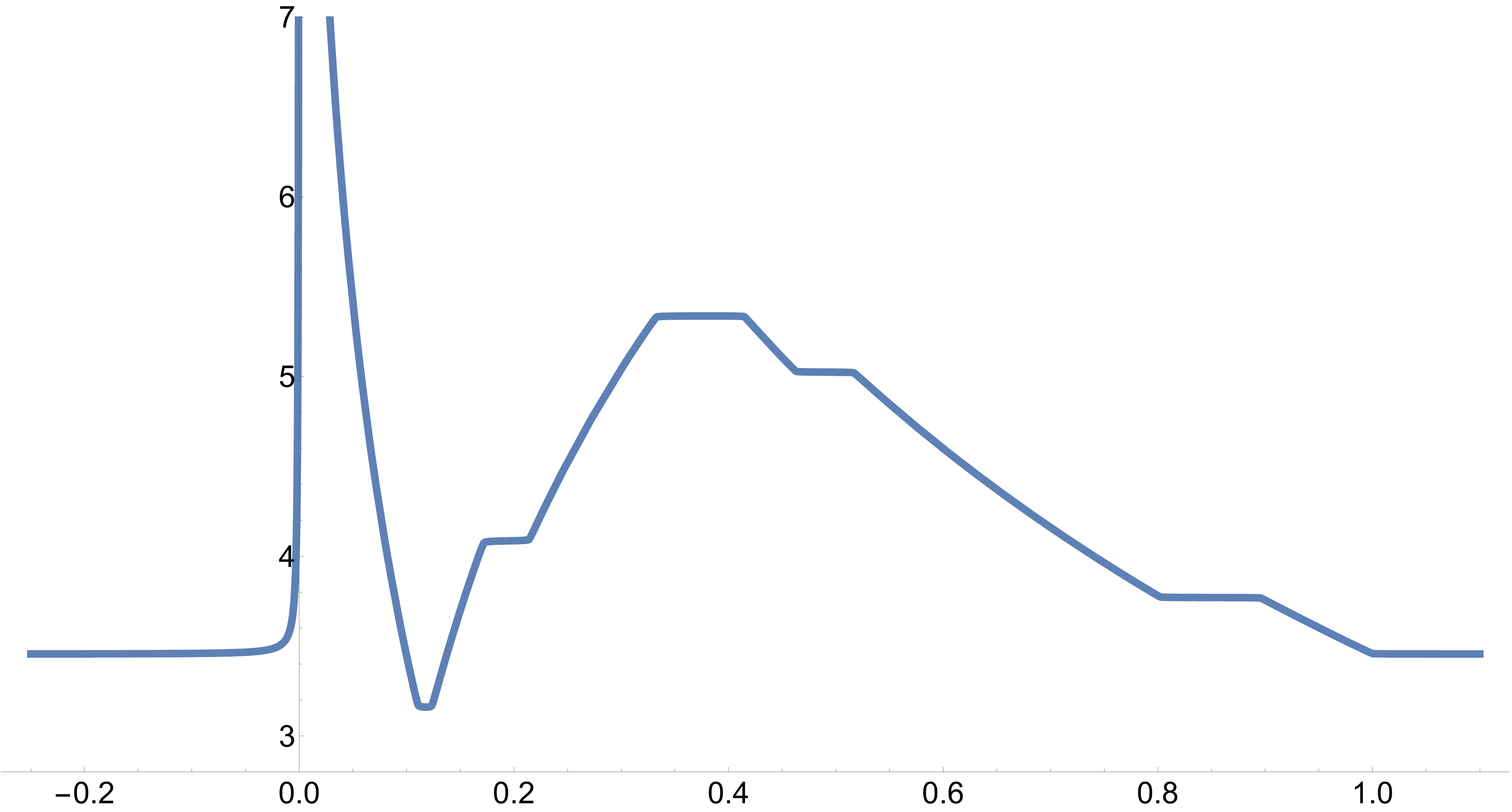}
		\caption{The graph of $v\mapsto \Im S(v+\mathbf{i}\varepsilon;\tau,\rho)$
		for small $\varepsilon$. For any fixed $\varepsilon>0$ this function 
		is continuous. Its maximum is in the neighborhood of zero and 
		has order $O(\left|\log \varepsilon\right|)$.}
		\label{fig:Im_graph}
	\end{figure}

	Thus, for small $\varepsilon$ the 
	graph of 
	$v\mapsto \Im S(v+\mathbf{i}\varepsilon;\tau,\rho)$
	belongs to an $O(\varepsilon)$ neighborhood 
	of the graph of the following function:
	\begin{equation*}
		\begin{split}
		&
		S_{\mathbb{R}}(v;\tau,\rho)\coloneqq
		\pi\tau\mathbf{1}_{v<0}
		-
		\gamma^{-1}
		\chi_+( ve^{\gamma\rho} )
		+
		\gamma^{-1}
		\chi_+( ve^{\gamma(\rho+\tau)} )
		\\&\hspace{150pt}
		-
		\gamma^{-1}
		\chi_-( v )
		+
		\gamma^{-1}
		\sum_{i=1}^L
		\left[ 
			\chi_-(v e^{\gamma(a_i+C_i)}     )-
			\chi_-(v e^{\gamma(a_{i+1}+C_i)} )
		\right].
	\end{split}
	\end{equation*}
	We see that 
	for $v<0$ and for sufficiently large $v$, 
	the function $S_{\mathbb{R}}$ is equal to $\pi\ssp\tau$.
	Next, the function $S_{\mathbb{R}}$
	is piecewise linear in $\log v$. Due to the 
	ordering of $a_i$ and $C_i$ \eqref{eq:a_C_relation}, 
	one readily sees that $S_{\mathbb{R}}(v;\tau,\rho)$
	for $v>0$ first weakly decreases in $v$, then it may 
	weakly increase $v$, and finally it weakly decreases in $v$ 
	again
	until it stabilizes at the value $\pi\tau$.

	Moreover, for any fixed $\varepsilon>0$, the pre-limit function
	$v\mapsto \Im S(v+\mathbf{i}\varepsilon;\tau,\rho)$
	is not constant. Thus, we see that the
	graph of the pre-limit function may intersect any horizontal line
	at most four times: at most once in a small left neighborhood of
	$v=0$, and at most three times
	for $v>0$. 
	For small $\varepsilon$, 
	the graph of 
	$v\mapsto \Im S(v+\mathbf{i}\varepsilon;\tau,\rho)$ 
	becomes more and more vertical, and 
	thus we see that the leftmost point 
	of intersection with a horizontal line
	goes to $0$ as $\varepsilon\to 0$.
	This completes the proof.
\end{proof}

Let us now look at the behavior of $\Im S(w;\tau,\rho)$
for large $|w|$.

\begin{lemma}
	\label{lemma:ImS_for_large_w}
	We have
	\begin{equation*}
		\lim_{R\to+\infty}
		\Im S(Re^{\mathbf{i}\theta};\tau,\rho)
		=\pi\tau,
	\end{equation*}
	uniformly in $\theta\in [0,\pi]$.
\end{lemma}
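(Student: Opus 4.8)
The plan is to expand each of the four terms in the definition \eqref{eq:def_S_new} of $S(w;\tau,\rho)$ as $w=Re^{\mathbf{i}\theta}$ with $R\to+\infty$, tracking imaginary parts and the branch of the logarithm that makes $S$ holomorphic in $\{\Im w>0\}$ (the same standard branch, cut along $(-\infty,0]$, as fixed before \Cref{lemma:ImS_behaviour_close_to_real_line}). First I would dispose of the two ``small'' integrals: since $|w^{-1}e^{-\gamma u}|\le 1/R$ and $|w^{-1}e^{-\gamma g(u)}|\le 1/R$, the elementary bound $|\log(1-\zeta)|\le 2|\zeta|$ for $|\zeta|\le\frac12$ yields $\bigl|\int_0^\infty\log(1-w^{-1}e^{-\gamma u})\,du\bigr|=O(1/R)$ and $\bigl|\int_0^1\log(1-w^{-1}e^{-\gamma g(u)})\,du\bigr|=O(1/R)$, both uniformly in $\theta$.

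Next I would treat the two ``large'' terms together. For $\tau\log w$ one has $\Im(\tau\log w)=\tau\theta$. For the remaining integral I would factor $1-we^{\gamma(\tau+\rho-u)}=\bigl(-we^{\gamma(\tau+\rho-u)}\bigr)\bigl(1-w^{-1}e^{-\gamma(\tau+\rho-u)}\bigr)$; the second factor equals $1+O(1/R)$ uniformly in $u\in[0,\tau]$ and $\theta$, while $\arg\bigl(-we^{\gamma(\tau+\rho-u)}\bigr)=\arg(-w)=\theta-\pi$. (Here one checks that $1-we^{\gamma(\tau+\rho-u)}$ does not vanish for real $u$ when $\Im w>0$, since its zero in $u$ is at $u=\tau+\rho+\gamma^{-1}\log w\notin\mathbb{R}$, so a continuous branch along $u\in[0,\tau]$ exists and agrees with the one making $S$ holomorphic.) Hence $\Im\log\bigl(1-we^{\gamma(\tau+\rho-u)}\bigr)=\theta-\pi+O(1/R)$ uniformly, so
\begin{equation*}
	\Im\Bigl(-\int_0^\tau\log\bigl(1-we^{\gamma(\tau+\rho-u)}\bigr)\,du\Bigr)=-\tau(\theta-\pi)+O(1/R)=\pi\tau-\tau\theta+O(1/R).
\end{equation*}
Adding all contributions, $\Im S(Re^{\mathbf{i}\theta};\tau,\rho)=\tau\theta+(\pi\tau-\tau\theta)+O(1/R)=\pi\tau+O(1/R)$ uniformly in $\theta\in[0,\pi]$, and letting $R\to+\infty$ concludes.

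The step I expect to cost the most care is the branch bookkeeping at the endpoints $\theta=0$ and $\theta=\pi$, where $w$ is real and the factors $1-we^{\gamma(\tau+\rho-u)}$ land on (or cross) the cut $(-\infty,0]$. I would resolve this by observing that along the circle $|w|=R$ with $R$ large, $S$ is in fact \emph{single-valued}: going once around, $\tau\log w$ gains $2\pi\mathbf{i}\tau$ and $\int_0^\tau\log(1-we^{\gamma(\tau+\rho-u)})\,du$ gains the same $2\pi\mathbf{i}\tau$ (the zero $w=e^{-\gamma(\tau+\rho-u)}$ lies inside the circle for every $u\in[0,\tau]$, while the remaining terms have no monodromy on $|w|=R$), and these cancel. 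Thus $\Im S$ extends continuously to all of $|w|=R$, and its value at $\theta\in\{0,\pi\}$ is the limit of its values for $\theta\in(0,\pi)$, to which the estimate above applies verbatim. (Equivalently, one takes at the endpoints the one-sided boundary values $\arg(-w)\to-\pi$ as $\theta\to0^+$ and $\arg(-w)\to0^-$ as $\theta\to\pi^-$, which is precisely the boundary value of the branch holomorphic in $\{\Im w>0\}$.)
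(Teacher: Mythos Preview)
Your proof is correct and takes a genuinely different route from the paper's. The paper works with the dilogarithm form \eqref{eq:S_via_Li2} and invokes the functional equation $\mathrm{Li}_2(\xi)+\mathrm{Li}_2(\xi^{-1})=-\tfrac{\pi^2}{6}-\tfrac12(\log(-\xi))^2$ from \cite{NIST:DLMF} to show $\Im\bigl(\mathrm{Li}_2(Re^{\mathbf{i}\theta})\bigr)\sim(\pi-\theta)\log R$; the two ``large'' dilogarithm terms then differ by $\gamma^{-1}(\pi-\theta)\cdot\gamma\tau=(\pi-\theta)\tau$, which combines with $\tau\theta$ to give $\pi\tau$. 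You instead stay with the integral representation \eqref{eq:def_S_new} and factor $1-we^{A}=(-we^{A})(1-w^{-1}e^{-A})$ directly, which yields the same $(\pi-\theta)\tau$ contribution with only the elementary bound $|\log(1-\zeta)|\le 2|\zeta|$. Your approach is more self-contained (no external identity needed) and gives an explicit $O(1/R)$ rate; the paper's approach is more in keeping with how $S$ is analyzed elsewhere via \eqref{eq:S_via_Li2}. Your endpoint discussion via the monodromy cancellation of $\tau\log w$ against $-\int_0^\tau\log(1-we^{\gamma(\tau+\rho-u)})\,du$ on $|w|=R$ is a nice touch that the paper does not make explicit.
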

\begin{proof}
	Clearly, we have
	$\Im(\log Re^{\mathbf{i}\theta})=\theta$.
	Moreover, 
	$\mathrm{Li}_2(w^{-1})\to 0$ for $|w|\to+\infty$
	because $\mathrm{Li}_2$ is continuous at $0$.
	To complete the proof, it remains to show that
	\begin{equation*}
		\Im\bigl(\mathrm{Li}_2(R e^{\mathbf{i}\theta})\bigr)\sim
		(\pi-\theta)\log R,\qquad R\to+\infty,
	\end{equation*}
	uniformly in $\theta\in[0,\pi]$.
	We have
	\cite[(25.12.4)]{NIST:DLMF}
	\begin{equation*}
		\Im\bigl(\mathrm{Li}_2(R e^{\mathbf{i}\theta})\bigr)
		=
		-
		\Im\bigl(\mathrm{Li}_2(R^{-1} e^{-\mathbf{i}\theta})\bigr)
		-\frac{1}{2}\ssp \Im
		\left( \log(-R e^{\mathbf{i}\theta}) \right)^2.
	\end{equation*}
	The first term goes to zero as $R\to+\infty$, and for the second 
	term we have
	\begin{equation*}
		-\frac{1}{2}\ssp \Im
		\left( \log(-R e^{\mathbf{i}\theta}) \right)^2=
		-\Re\left( \log(-R e^{\mathbf{i}\theta}) \right)
		\Im \left( \log(-R e^{\mathbf{i}\theta}) \right)
		=-\log R\cdot(\theta-\pi),
	\end{equation*}
	and we are done.
\end{proof}

\subsection{Contour deformation and convergence to the incomplete beta kernel}
\label{sub:deformation_and_sine}

\Cref{lemma:ImS_behaviour_close_to_real_line,lemma:ImS_for_large_w}
imply that when $(\tau,\rho)$ is in the liquid region $\mathcal{D}$,
all four half-contours emanating from the 
critical point $w_c$ in the upper half plane
end on the real line. Let us denote these points
by
\begin{equation}
	\label{eq:4_points_of_intersection}
	u^z_- = 0 < u^w_- < u^z_+ < u^w_+.
\end{equation}
We take the leftmost point to be
$0$ as the $\varepsilon\to 0$ limit of the 
leftmost point of intersection in 
\Cref{lemma:ImS_behaviour_close_to_real_line}.
Moreover, from the proof of \Cref{lemma:ImS_behaviour_close_to_real_line}
we see that
\begin{equation}
	\label{eq:u_z_u_w_ordering}
	u^w_-< e^{-\gamma(\tau+\rho)} < u^z_+ < e^{-\gamma\rho} < u^w_+.
\end{equation}

Let us denote two closed, positively 
oriented contours 
$\Im S(w; \tau,\rho) = \Im S(w_c; \tau,\rho)$
by $\gamma_z$, $\gamma_w$, where both of them pass through
$w_c$ and $\overline{w}_c$, the contour 
$\gamma_z$ goes through 
$u^z_-, u^z_+$, and 
$\gamma_w$ goes through 
$u^w_-, u^w_+$.

\begin{lemma}
	\label{lemma:Re_S_at_intersection_points}
	We have 
	$\Re S(u^z_{\pm};\tau,\rho)
	>
	\Re S(w_c;\tau,\rho)$
	and 
	$\Re S(u^w_{\pm};\tau,\rho)
	<
	\Re S(w_c;\tau,\rho)$.
\end{lemma}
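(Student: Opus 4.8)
The plan is to read off the four sign conditions from the geometry of the steepest‑contour net of $S(\cdot;\tau,\rho)$ issuing from $w_c$, combining two ingredients: strict monotonicity of $\Re S$ along each steepest half‑contour, and a matching of the four terminal points on $\mathbb{R}$ to the ascent/descent type of the half‑contour reaching them. Since $(\tau,\rho)$ is interior to the liquid region, $w_c$ is a nondegenerate critical point ($S''(w_c)\neq0$; a double root of \eqref{eq:critical_points_equation_exponentiated} would force $(\tau,\rho)\in\partial\mathcal D$), so exactly four half‑contours of $\{\Im S=\Im S(w_c)\}$ leave $w_c$, at equal angular spacing, and they alternate between \emph{ascent} half‑contours, along which $\Re S$ increases away from $w_c$, and \emph{descent} half‑contours, along which it decreases. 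Along any such curve $w(s)$ one has $\Im\bigl(S'(w(s))w'(s)\bigr)\equiv0$, hence $\frac{d}{ds}\Re S(w(s))=\Re\bigl(S'(w(s))w'(s)\bigr)=\pm|S'(w(s))|\,|w'(s)|$, which can vanish only at a critical point of $S$; by \Cref{lemma:critical_points_count_upper_half} there is no critical point in the open upper half‑plane other than $w_c$, so $\Re S$ is \emph{strictly} monotone from $w_c$ all the way to the terminal point on $\mathbb{R}$. Thus $\Re S$ exceeds $\Re S(w_c)$ at the terminal point of an ascent half‑contour and is strictly smaller at the terminal point of a descent one.

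It remains to identify which of the terminal points belong to ascent half‑contours. First I would show that the four half‑contours leave $w_c$ in the same cyclic order as the order $u^z_-=0<u^w_-<u^z_+<u^w_+$ of their terminal points on $\mathbb{R}$. Away from $w_c$ the level set $\{\Im S=\Im S(w_c)\}$ is a smooth $1$‑manifold (no further critical points), so the four arcs are pairwise disjoint in the open upper half‑plane and each meets $\mathbb{R}$ only at its endpoint; a standard Jordan‑curve argument (the arc to the leftmost and the arc to the rightmost endpoint, together with the real interval joining those endpoints, bound a region that must contain exactly the arcs with intermediate endpoints, and so on) then gives the claimed order‑preservation. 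Because ascent and descent strictly alternate around $w_c$, the terminal points, read in the order $0,u^w_-,u^z_+,u^w_+$, carry the type pattern (ascent, descent, ascent, descent) or its reverse; in either case $u^z_-,u^z_+$ share one type and $u^w_-,u^w_+$ the other.

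To break the symmetry it suffices to pin down the type of the half‑contour ending at $u^z_-=0$. From \eqref{eq:S_via_Li2}, as $w\to0$ the term $-\gamma^{-1}\mathrm{Li}_2(w^{-1})$ has real part $\sim\frac{1}{2\gamma}(\log|w|)^2\to+\infty$ (by the dilogarithm inversion formula its real part is $-\gamma^{-1}\bigl(-\tfrac12\log^2(-w^{-1})\bigr)+o\bigl((\log|w|)^2\bigr)$), and this dominates the remaining terms: $\tau\log w$ is $o\bigl((\log|w|)^2\bigr)$, the dilogarithms $\mathrm{Li}_2(we^{\gamma\rho})$ and $\mathrm{Li}_2(we^{\gamma(\rho+\tau)})$ tend to $0$, and in the cluster sum the leading $-\tfrac12(\log|w|)^2$ behavior of each dilogarithm cancels by telescoping, leaving a lower‑order remainder. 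Hence $\Re S(w)\to+\infty$ as $w\to0$, so $\Re S(u^z_-)>\Re S(w_c)$ and, $\Re S$ being strictly monotone along the half‑contour reaching $u^z_-$, that half‑contour is of ascent type. (Recall $u^z_-$ is the $\varepsilon\to0$ limit of the leftmost $\varepsilon$‑shifted intersection point; the blow‑up of $\Re S$ near $0$ is robust under this limit.) By the alternation of the previous paragraph, $u^z_+$ is then also an ascent endpoint while $u^w_\pm$ are descent endpoints, which together with the monotonicity of the first paragraph gives exactly $\Re S(u^z_\pm;\tau,\rho)>\Re S(w_c;\tau,\rho)$ and $\Re S(u^w_\pm;\tau,\rho)<\Re S(w_c;\tau,\rho)$.

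The main obstacle is the topological bookkeeping of the second paragraph: one must be certain that all four half‑contours genuinely terminate on $\mathbb{R}$ — this is precisely what was recorded above from \Cref{lemma:ImS_behaviour_close_to_real_line,lemma:ImS_for_large_w} (escape to $\infty$ is excluded since $\Im S\to\pi\tau$ there, and the maximum principle for harmonic functions excludes closed loops) — and that the fixed holomorphic branch of $S$ on the open upper half‑plane makes the statements about $\log$ and $\mathrm{Li}_2$ unambiguous. Once the non‑crossing and the order‑preservation are in place, only the short asymptotic computation near $w=0$ remains, and the rest is the monotonicity argument of the first paragraph.
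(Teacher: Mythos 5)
Your proof is correct and follows essentially the same route as the paper's: monotonicity of $\Re S$ along the steepest‑descent/ascent half‑contours, alternation of ascent and descent types around the saddle $w_c$, and the computation (via the dilogarithm inversion formula) that $\Re S(w;\tau,\rho)\to+\infty$ as $w\to 0$, which pins down the type of the half‑contour ending at $u^z_-=0$. The only difference is that you spell out the topological bookkeeping (nondegeneracy of $w_c$ and the Jordan‑curve argument matching the cyclic order at $w_c$ to the linear order of the endpoints on $\mathbb{R}$) that the paper leaves implicit.
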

These inequalities justify our notation for the
points \eqref{eq:4_points_of_intersection} and 
the contours $\gamma_z,\gamma_w$. The latter
will be the new integration contours in the 
correlation kernel.
\begin{proof}[Proof of \Cref{lemma:Re_S_at_intersection_points}]
	Clearly, on the contours
	$\Im S(w; \tau,\rho) = \Im S(w_c; \tau,\rho)$
	(which are the
	steepest descent/ascent ones),
	the real part of $S$ is monotone. 
	Moreover, the increasing and decreasing behavior of
	$\Re S$
	alternates throughout
	the four half-contours originating at the 
	critical point $w_c$ (which is a saddle point for
	$\Re S$). Therefore, the result will follow if we show that
	$+\infty=\Re S(u^z_{-};\tau,\rho)
	>
	\Re S(w_c;\tau,\rho)$.
	That is, let us show that
	$\Re S(w,\tau,\rho)\to+\infty$
	as $|w|\to 0$.
	
	Using 
	\cite[(25.12.4)]{NIST:DLMF}
	similarly to the proof of \Cref{lemma:ImS_for_large_w},
	we can write for small $|w|$:
	\begin{equation*}
		\Re \bigl(\mathrm{Li}_2(w^{-1})\bigr)=
		-
		\Re \bigl(\mathrm{Li}_2(w)\bigr)-\frac{\pi^2}{6}-
		\frac{1}{2}\Re
		\left( \log(-w^{-1}) \right)^2
	\end{equation*}
	For $|w|\to0$, 
	the first summand in the right-hand side goes to zero,
	while for the last one we have
	\begin{equation*}
		-
		\frac{1}{2}\Re
		\left( \log(-w^{-1}) \right)^2
		=-\frac{1}{2}(\log|w|)^2
		+\frac{1}{2}(\mathrm{Arg}(-w^{-1}))^2
	\end{equation*}
	The argument is bounded, and so we see using \eqref{eq:S_via_Li2}
	that $\Re S(w;\tau,\rho)$ behaves for small $|w|$ as
	\begin{equation*}
		-\tau\log (|w|^{-1})+
		\frac{\gamma^{-1}}{2}(\log|w|)^2
		+\frac{\gamma^{-1}}{2}
		\sum_{i=1}^L
		\left( \bigl(\log\bigl|w e^{\gamma(a_{i+1}+C_i)}\bigr|\bigr)^2
		-
		\bigl(\log\bigl|w e^{\gamma(a_{i}+C_i)}\bigr|\bigr)^2
		\right)+\mathrm{const}.
	\end{equation*}
	The term $-\tau\log(|w|^{-1})$ is of smaller order than 
	the squared logarithms,
	and one readily sees that the total contribution of the latter 
	is $+\infty$. This completes the proof.
\end{proof}

Let us recall the
original integration contours in the kernel
$K_{\mathrm{walks}}$ \eqref{eq:K_walks_intro} which
we reproduce here for convenience:
\begin{equation}
	\label{eq:K_walks_for_deformation}
		\begin{split}
			&K_{\mathrm{walks}}(p_1,t_1;p_2,t_2)
			=
			\mathbf{1}_{t_1=t_2}\mathbf{1}_{p_1=p_2}-
			\mathbf{1}_{t_2>t_1}\mathbf{1}_{p_2+t_2>p_1+t_1}
			\frac{q^{(t_1-t_2)(p_1+t_1)}(q^{p_1-p_2+t_1-t_2+1};q)_{t_2-t_1-1}}{(q;q)_{t_2-t_1-1}}
			\\&\hspace{20pt}
			-\frac{q^{-t_1-p_1}}{(2\pi\mathbf{i})^2}
			\oiint
			dz\ssp dw\ssp
			\frac{z^{-t_2}w^{t_1} }{w-z}
			\frac{(q;q)_{t_1}}{(wq^{-p_1-t_1};q)_{t_1+1}}
			\frac{(zq^{1-p_2-t_2};q)_{t_2-1}}{(q;q)_{t_2-1}}
			\frac{(w^{-1};q)_{\infty}}{(z^{-1};q)_{\infty}}
			\prod_{r=1}^{m}\frac{1-q^{x_r}/z}{1-q^{x_r}/w},
		\end{split}
\end{equation}
The $w$ contour is an arbitrarily small positively oriented 
circle around $0$, and the 
$z$ contour is positively oriented,
goes around $q^{p_2+t_2},q^{p_2+t_2+1},q^{p_2+t_2+2},\ldots$ and
the $w$ contour, and encircles no other $z$ poles of the integrand. 
The singularities of the integrand are as follows
(see \Cref{fig:original_contours} for an illustration):
\begin{enumerate}[$\bullet$]
	\item In $w$, there is
		an essential singularity at $w=0$, and all the simple poles are 
		at
		\begin{equation}
			\label{eq:w_poles_integrand}
			w=z
			\quad \textnormal{and}
			\quad 
			w\in \bigl\{ q^{p_1+j} \bigr\}_{j=0}^{t_1} \bigcap \ssp \bigl\{q^{x_r} \bigr\}_{r=1}^m.
		\end{equation}
	\item In $z$, all the simple poles are at
		\begin{equation}
			\label{eq:z_poles_integrand}
			z=w
			\quad \textnormal{and}
			\quad 
			z\in \bigl\{ q^j \bigr\}_{j=0}^\infty
			\setminus
			\left( \bigr\{ q^{p_2+j} \bigr\}_{j=1}^{t_2-1} 
			\bigcup \ssp \bigl\{ q^{x_r} \bigr\}_{r=1}^{m} \right).
		\end{equation}
\end{enumerate}
Note that $z=0$ is not a pole thanks to the presence of 
the function $(z^{-1};q)_{\infty}$ in the denominator.
Moreover, observe that at infinity, the integrand behaves
as $O(w^{-2})$ as a function of $w$. This implies that it has no 
residue at $w=\infty$.

\begin{figure}[htpb]
	\centering
	\includegraphics[width=.8\textwidth]{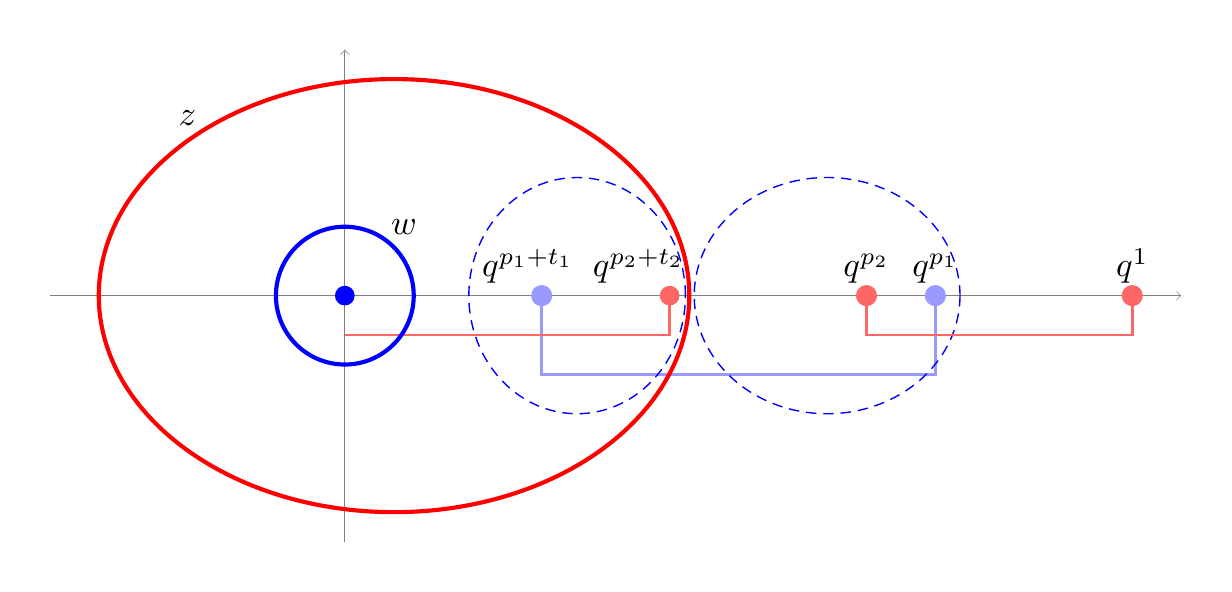}
	\caption{Thick curves represent the original $w$ and $z$ contours 
	in $K_{\mathrm{walks}}$ \eqref{eq:K_walks_for_deformation}.
	The possible $w$ poles \eqref{eq:w_poles_integrand} lie between $q^{p_1}$
	and $q^{p_1+t_1}$. The possible $z$ poles \eqref{eq:z_poles_integrand}
	lie outside of the segment between $q^{p_2+1}$ and $q^{p_2+t_2-1}$.
	Note that the relative positions of $p_1$ and $p_2$, as well as of 
	$p_1+t_1$ and $p_2+t_2$, may be arbitrary, and in the
	figure we display only one such possibility.
	The union of the dashed curves is the new $w$ contour after we drag it through infinity.}
	\label{fig:original_contours}
\end{figure}

In the bulk asymptotic regime 
\eqref{eq:asymptotic_regime}--\eqref{eq:scaling_initial_conf},
assume that the position 
$(\tau,\rho)$ is in the liquid region $\mathcal{D}$ (\Cref{def:frozen_boundary}).
We aim to deform the contours 
in $K_{\mathrm{walks}}$ \eqref{eq:K_walks_for_deformation}
to new contours which intersect at the non-real critical 
points $w_c, \overline{w}_c$, and coincide with the steepest descent contours
$\gamma_z,\gamma_w$ (defined before \Cref{lemma:Re_S_at_intersection_points}) 
outside a small neighborhood
of the real line.
Fix small $\varepsilon>0$, and perform the contour deformations in the following
order:
\begin{enumerate}[(1)\/]
	\item 
		Keeping the $w$ contour a small circle around $0$ 
		of radius $\varepsilon/2$, deform the $z$
		contour to coincide with the steepest descent contour $\gamma_z$
		outside of the $\varepsilon$-neighborhood of $\mathbb{R}$.
		In the $\varepsilon$-neighborhood of $\mathbb{R}$,
		we need to make sure that the deformation from the
		old to the new $z$ contour does not cross any $z$-poles
		of the integrand.
		Namely, 
		in the $\varepsilon$-neighborhood of $u^z_-=0$, let the new $z$ contour 
		pass around $0$ following a circle of radius $\varepsilon$ instead
		of going straight to $0$ along $\gamma_z$.
		Around $u^z_+$ which is between $e^{-\gamma(\tau+\rho)}$ and $e^{-\gamma\rho}$
		(see \eqref{eq:u_z_u_w_ordering})
		but may not be between $q^{p_2+t_2}$ and $q^{p_2}$,
		let the new $z$ contour follow straight lines at distance $\varepsilon$ from
		$\mathbb{R}$, and then go around the existing poles 
		at distance at least $\varepsilon$ 
		from these poles 
		(see \Cref{fig:new_z_contour_epsilon} for an illustration).
		Denote the new $z$ contour by $\gamma_z^\varepsilon$.

	\item Drag $w$ through infinity, that is, replace the $w$ integral 
		over a small contour around $0$ by minus the integral 
		over all the other $w$-poles which are listed in \eqref{eq:w_poles_integrand}.
		Thus, we obtain minus the integral over the union of the dashed contours
		in \Cref{fig:original_contours}, 
		minus $2\pi\mathbf{i}$ times 
		the residue of the integrand at $w=z$ (which is still under the single 
		integral in $z$ over $\gamma_z^\varepsilon$).

	\item 
		Now let us deform the $w$ contour 
		to the steepest descent contour $\gamma_w$
		outside the $\varepsilon$-neighborhood of $\mathbb{R}$.
		In the $\varepsilon$-neighborhood of $\mathbb{R}$
		let us
		modify the new $w$ contour so that the deformation does not pick any 
		residues one the real line, at points $w=q^j$ (this is done similarly to the
		contour $\gamma_z^\varepsilon$, see \Cref{fig:new_z_contour_epsilon}
		for an illustration).
		Denote the new $w$ contour by $\gamma_w^\varepsilon$.
		The deformation of the $w$ contour to $\gamma_w^\varepsilon$
		picks a residue at $w=z$ if $z$ is in the right
		part of its contour, from $\overline w_c$ to $w_c$
		in the counterclockwise order.
\end{enumerate}

Accounting for all residues and sign changes 
throughout the contour deformation,
we see that the kernel $K_{\mathrm{walks}}$ 
\eqref{eq:K_walks_for_deformation} 
takes the form:
\begin{equation}
	\label{eq:K_walks_deformed_contours}
		\begin{split}
			&K_{\mathrm{walks}}(p_1,t_1;p_2,t_2)
			=
			-
			\mathbf{1}_{t_2>t_1}\mathbf{1}_{p_2+t_2>p_1+t_1}
			\frac{q^{(t_1-t_2)(p_1+t_1)}(q^{p_1-p_2+t_1-t_2+1};q)_{t_2-t_1-1}}
			{(q;q)_{t_2-t_1-1}}
			\\&\hspace{20pt}
			+
			\mathbf{1}_{t_1=t_2}\mathbf{1}_{p_1=p_2}
			+
			\frac{q^{-t_1-p_1}}{2\pi\mathbf{i}}
			\int_{w_c\to \overline w_c}
			\frac{
			z^{t_1-t_2}
			(q;q)_{t_1}(zq^{1-p_2-t_2};q)_{t_2-1}}
			{(q;q)_{t_2-1}(zq^{-p_1-t_1};q)_{t_1+1}}
			\ssp dz
			\\&\hspace{20pt}
			+
			\frac{q^{-t_1-p_1}}{(2\pi\mathbf{i})^2}
			\oint_{\gamma_z^\varepsilon}dz \oint_{\gamma_w^\varepsilon}dw\ssp
			\frac{(q;q)_{t_1}}{(q;q)_{t_2-1}}
			\frac{\exp\left\{m \left(  
			S_m(w;t_1,p_1)-S_m(z;t_2,p_2) 
			\right) 
			\right\}}{(w-z)(1-zq^{-p_2})(1-zq^{-p_2-t_2})},
		\end{split}
\end{equation}
Here the single integral is over the \emph{left}
part of the contour $\gamma_z^\varepsilon$, 
from $w_c$ to $\overline w_c$ in the counterclockwise order,
and we used the notation \eqref{eq:def_S_m_new}.

\begin{figure}[htpb]
	\centering
	\includegraphics[width=.8\textwidth]{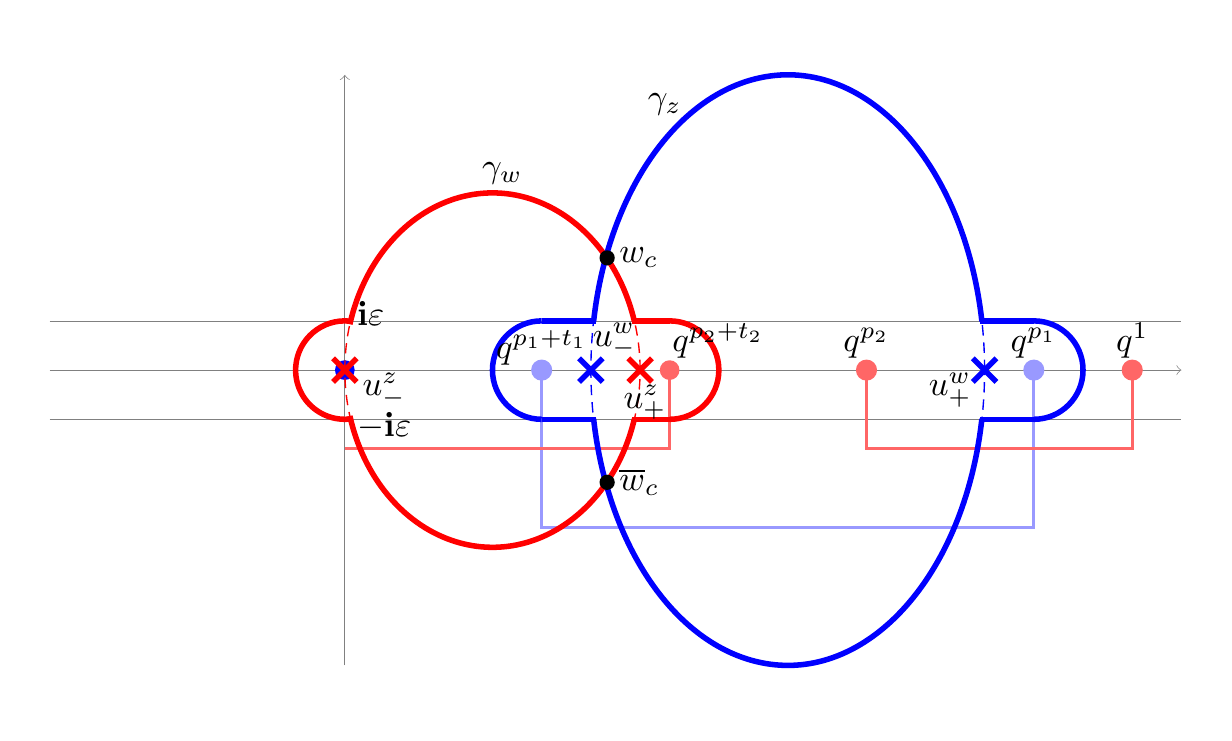}
	\caption{Deformed integration contours to the steepest descent ones,
	with modifications in the $\varepsilon$-neighborhood
	of the real line to avoid picking unnecessary residues.
	Note that for large $m$, not all four modifications
	in the $\varepsilon$-neighborhood are present.}
	\label{fig:new_z_contour_epsilon}
\end{figure}

\begin{lemma}
	\label{lemma:double_goes_to_zero}
	With $\varepsilon=m^{-1}$,
	in the bulk limit regime
	\eqref{eq:asymptotic_regime}--\eqref{eq:scaling_initial_conf},
	the double contour integral in 
	\eqref{eq:K_walks_deformed_contours} goes to zero.
\end{lemma}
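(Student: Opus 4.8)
The plan is to estimate the double contour integral in \eqref{eq:K_walks_deformed_contours} by its integrand along the deformed steepest-descent contours $\gamma_z^\varepsilon$ and $\gamma_w^\varepsilon$, splitting each contour into a portion lying outside an $\varepsilon$-neighborhood of $\mathbb{R}$ (the genuine steepest-descent arcs $\gamma_z,\gamma_w$) and the $O(\varepsilon)$-length modifications near the real line. On the steepest-descent arcs, the key input is \Cref{lemma:S_convergence} together with \Cref{lemma:Re_S_at_intersection_points}: away from $w_c,\overline{w}_c$ we have $\Re S(w;\tau,\rho)<\Re S(w_c;\tau,\rho)$ on $\gamma_w$ and $\Re S(z;\tau,\rho)>\Re S(w_c;\tau,\rho)$ on $\gamma_z$, so the exponent $m(S_m(w;t_1,p_1)-S_m(z;t_2,p_2))$ has real part bounded above by $m(\Re S(w_c)-\Re S(w_c))+o(m)=o(m)$ near the critical points, and strictly negative (order $-cm$) once one stays a fixed distance away. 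The standard steepest-descent localization then shows that the contribution of the arcs is $O(m^{-1/2})$ (Laplace/saddle-point bound: the quadratic vanishing of $\Re S$ at the simple saddle $w_c$ gives a $1/\sqrt{m}$ factor), using that the prefactor $\frac{(q;q)_{t_1}}{(q;q)_{t_2-1}}\cdot\frac{1}{(w-z)(1-zq^{-p_2})(1-zq^{-p_2-t_2})}$ is uniformly bounded away from the real line because $w_c,\overline{w}_c$ are non-real and the $z$-contour is kept at distance $\gtrsim\varepsilon$ from its poles.

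Next I would handle the four $\varepsilon$-neighborhood modifications near the points \eqref{eq:4_points_of_intersection}. Here the subtlety is that $S_m$ is not well controlled uniformly up to $\mathbb{R}$, and the prefactor has poles at the points $q^j$ on the real line; but on $\gamma_z^\varepsilon$ and $\gamma_w^\varepsilon$ we have arranged (step (1) and step (3) of the deformation) that the contours stay at distance $\ge\varepsilon=m^{-1}$ from all these poles, so the prefactor is $O(\varepsilon^{-2})=O(m^2)$ there. Meanwhile the arc length of each modification is $O(\varepsilon)=O(m^{-1})$, and crucially $\Re(S_m(w;t_1,p_1)-S_m(z;t_2,p_2))$ along these near-real pieces is still bounded above: on the $z$-side we are near $u^z_\pm$ where $\Re S$ exceeds $\Re S(w_c)$, and on the $w$-side we are near $u^w_\pm$ where $\Re S<\Re S(w_c)$, so by continuity of $S$ (away from its logarithmic singularity at $0$, which is handled separately — see below) the difference of real parts is $\le -cm$ for some $c>0$ after the $O(\varepsilon)$ excursion, killing the $e^{o(m)}$ and the polynomial prefactor. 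The one genuinely delicate point is the modification near $u^z_-=0$, where $\Re S(z;\tau,\rho)\to+\infty$ by the last paragraph of the proof of \Cref{lemma:Re_S_at_intersection_points}; this is favorable (it \emph{suppresses} the $z$-integrand), so along the little circle of radius $\varepsilon$ around $0$ one gets $\Re(-S_m(z;t_2,p_2))\le -\tfrac{c}{2}(\log\varepsilon)^2\cdot\tfrac1m\cdot m = -c'(\log m)^2$, which beats the $O(m^2)$ prefactor and the $O(\varepsilon)$ length with room to spare; the matching $w$-side near $w=0$ does not occur because $\gamma_w^\varepsilon$ does not pass near $0$ (recall $u^w_->0$ strictly, by \eqref{eq:4_points_of_intersection}).

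Assembling these bounds: the arc contribution is $O(m^{-1/2})$, each near-real modification contributes $O(m^2\cdot m^{-1}\cdot e^{-cm})=o(1)$ (and $O(m^2\cdot m^{-1}\cdot e^{-c'(\log m)^2})=o(1)$ near $0$), so the full double integral tends to $0$ as $m\to\infty$, proving the lemma. The main obstacle I anticipate is bookkeeping the near-real modifications carefully enough: one must verify that after deforming as in steps (1)–(3) the contours $\gamma_z^\varepsilon,\gamma_w^\varepsilon$ indeed stay at distance $\gtrsim m^{-1}$ from the discrete pole sets \eqref{eq:w_poles_integrand}–\eqref{eq:z_poles_integrand} uniformly in $m$ (the poles $q^j=e^{-\gamma j/m}$ accumulate near $1$ with spacing $\sim\gamma/m$, exactly the scale $\varepsilon=m^{-1}$, so this is tight and needs the freedom to route the contour between consecutive poles), and that the sign of $\Re S-\Re S(w_c)$ has the claimed favorable sign on each of the four pieces — which is precisely what \eqref{eq:u_z_u_w_ordering} and \Cref{lemma:Re_S_at_intersection_points} were set up to guarantee. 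Everything else is the routine Laplace-method estimate outlined in \cite[Section 3]{Okounkov2002}.
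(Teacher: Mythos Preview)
Your overall strategy matches the paper's proof exactly: split $\gamma_z^\varepsilon,\gamma_w^\varepsilon$ into the genuine steepest-descent arcs and the $O(\varepsilon)$-length modifications near $\mathbb{R}$, use \Cref{lemma:S_convergence} and \Cref{lemma:Re_S_at_intersection_points} for the arcs, and argue the modifications do not spoil the exponential decay. The arc estimate and the treatment of the circle around $u^z_-=0$ are fine.

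The gap is in your handling of the modifications near $u^w_\pm$ and $u^z_+$. You invoke ``continuity of $S$ (away from its logarithmic singularity at $0$)'' to conclude $\Re S(w)-\Re S(z)\le -c$ there, but $w=0$ is \emph{not} the only bad point: by \eqref{eq:S_via_Li2}, $S'(w;\tau,\rho)$ has logarithmic singularities at each dilogarithm branch point $e^{-\gamma\rho}$, $e^{-\gamma(\rho+\tau)}$, $1$, and $e^{-\gamma(a_i+C_i)}$, $e^{-\gamma(a_{i+1}+C_i)}$. By \eqref{eq:u_z_u_w_ordering} the modifications can pass within distance $O(m^{-1})$ of $e^{-\gamma\rho}$ and $e^{-\gamma(\rho+\tau)}$, precisely where the derivative of $\Re S$ blows up. Bare continuity of $\Re S$ gives no uniform control on how much $\Re S$ can move over a path of length $\varepsilon$ through such a neighborhood; you need a quantitative modulus of continuity. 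The paper supplies exactly this missing piece: it bounds $\bigl|\partial\,\Re\,\mathrm{Li}_2(w)\bigr|$ and $\bigl|\partial\,\Re\,\mathrm{Li}_2(w^{-1})\bigr|$ by $C\log(\varepsilon^{-1})=C\log m$ on both the circular and straight pieces of the modification near $w=1$ (and hence near each branch point after rescaling), so the total change of $\Re S$ along any modification is $O(m^{-1}\log m)=o(1)$, and the factor $e^{m(-c+o(1))}$ still decays exponentially.

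Once you insert that derivative bound, the rest of your argument goes through. (Incidentally, your $O(m^2)$ bound on the prefactor near the modifications is more honest than the paper's flat claim that the prefactor ``stays bounded''; either way it is swallowed by $e^{-cm}$.)
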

\begin{proof}
	All the quantities except 
	$\exp\left\{m \left(  
	S_m(w;t_1,p_1)-S_m(z;t_2,p_2) 
	\right) 
	\right\}$
	in the double contour integral 
	stay bounded in our limit regime. 
	By \Cref{lemma:S_convergence}, the functions $S_m$ are 
	well-approximated by $S$.
	Since the integration contours are steepest descent for 
	$S$ outside the $\varepsilon$-neighborhood of $\mathbb{R}$,
	we see that the contribution from the 
	parts of the contours 
	away from the real line 
	goes to zero. This is because outside a small neighborhood of 
	$w_c$ and $\overline w_c$, the integrand 
	is bounded in absolute value by $e^{-cm}$ for some $c>0$.

	To estimate the contribution from the neighborhood of the real line,
	we need to bound the derivative of
	$\Re S(w;\tau,\rho)$ along the straight and the circular parts of the additional
	contours. All of these non-steepest descent additional contours have 
	length of order $\varepsilon=m^{-1}$.
	Indeed, for example, an additional contour may go from
	$q^{p_1}$ to $u_+^w$, and because $u_+^w>e^{-\gamma\rho}$,
	the length of this contour is bounded from above by the 
	distance between $q^{p_1}$ and $e^{-\gamma\rho}$.
	This distance is of order $1/m$, see \eqref{eq:asymptotic_regime}.
	Thus,
	it suffices to bound
	the derivative of $\Re S(w;\tau,\rho)$ on the additional contours
	by $o(m^{1-\delta})$ for some $\delta>0$.
	Indeed, then the total change of
	$\Re S(w;\tau,\rho)$ along the non-steepest descent additional contours
	is of order $o(m^{-\delta})$, and $e^{m ( -c+o(m^{-\delta}) )}$
	still goes to zero exponentially fast.

	To estimate the derivative of the real part of a function
	$f(z)=u(x,y)+iv(x,y)$ which is holomorphic in a neighborhood of a
	curve $z(\theta)=(x(\theta),y(\theta))$, we
	have by the Cauchy--Riemann equations:
	\begin{equation*}
		\frac{\partial}{\partial \theta}\Re f(z(\theta))=
		x'(\theta)\Re f'(z(\theta))-
		y'(\theta)\Im f'(z(\theta)).
	\end{equation*}
	
	Let us now turn to the function
	$S(w;\tau,\rho)$ \eqref{eq:S_via_Li2}.
	Different summands in \eqref{eq:S_via_Li2}
	have different singularities, 
	let us consider each of these singularities in order.
	First, in a neighborhood of
	$u^z_-=0$ the modified contour 
	$w(\theta)=\varepsilon e^{\mathbf{i}\theta}$
	goes in a circular way 
	without straight
	parts. We have
	(here and below in the proof, $C$ denotes a fixed sufficiently large 
	positive constant whose value may differ from one inequality
	to the next):
	\begin{equation*}
		\frac{\partial}{\partial \theta}\Re\bigl(\log w(\theta)\bigr)=0,
		\qquad 
		\left|
		\frac{\partial}{\partial \theta}\Re\bigl(\mathrm{Li}_2(Aw(\theta)^{-1})\bigr)
		\right|\le  C,
	\end{equation*}
	for any $A>0$, and all other summands in \eqref{eq:S_via_Li2} are 
	regular around $0$. 

	The next singularities may appear in the neighborhoods
	of $u^w_{\pm}$ or $u^z_+$. In these neighborhoods, 
	the modified contours may contain straight lines and circular
	segments. By changing variables, it suffices to estimate only 
	the derivatives of the real parts of
	$\mathrm{Li}_2(w)$ and $\mathrm{Li}_2(w^{-1})$
	in the neighborhood of $w=1$ (at all other points except $w=0$ 
	these functions 
	are regular, and we already considered $w=0$ above in the proof).
	We have for the circular contours $w(\theta)=
	1+\varepsilon e^{\mathbf{i}\theta}$:
	\begin{equation*}
		\left|\frac{\partial}{\partial \theta}\Re 
		\bigl(
		\mathrm{Li}_2(w(\theta))\bigr)
		\right|
		\le C \varepsilon\log(\varepsilon^{-1}),
		\qquad 
		\left|\frac{\partial}{\partial \theta}\Re 
		\bigl(
		\mathrm{Li}_2(w(\theta)^{-1})
		\bigr)
		\right|
		\le C \varepsilon\log(\varepsilon^{-1}),
	\end{equation*}
	For the straight contours $w(x)=x\pm\mathbf{i} \varepsilon$ we have:
	\begin{equation*}
		\left|\frac{\partial}{\partial x}\Re 
		\bigl(
		\mathrm{Li}_2(w(x))\bigr)
		\right|
		\le C \log(\varepsilon^{-1}),
		\qquad 
		\left|\frac{\partial}{\partial x}\Re 
		\bigl(
		\mathrm{Li}_2(w(x)^{-1})
		\bigr)
		\right|
		\le C \log(\varepsilon^{-1}),
	\end{equation*}
	We see that the derivative of $\Re S(w;\tau,\rho)$
	is upper bounded (in the absolute value)
	by $C\log(\varepsilon^{-1})=C\log m$, which is $o(m^{1-\delta})$
	for any $\delta<1$.
	This completes the proof.
\end{proof}

It remains to compute the limit of all the other terms in the right-hand
side of \eqref{eq:K_walks_deformed_contours}
except the negligible double integral:

\begin{lemma}
	\label{lemma:single_integrals}
	In the bulk limit regime
	\eqref{eq:asymptotic_regime}--\eqref{eq:scaling_initial_conf},
	the sum of the first three terms in
	\eqref{eq:K_walks_deformed_contours}
	converges to
	\begin{equation*}
		(-1)^{\Delta t}
		e^{-\gamma(\tau+\rho)\Delta t}
		\left( 
		\mathbf{1}_{t_1=t_2}\mathbf{1}_{p_1=p_2}
		-
		\mathsf{B}_{\omega}(t_1-t_2,p_1-p_2)
		\right),
	\end{equation*}
	where $\mathsf{B}_{\omega}$ is the incomplete beta
	kernel
	(\Cref{def:incomplete_beta_kernel}),
	and 
	\begin{equation}
		\label{eq:omega_for_beta_defn_in_last_step}
		\omega=\omega(\tau,\rho)
		\coloneqq
		\frac{1-w_c(\tau,\rho) e^{\gamma \rho}}{1-w_c(\tau,\rho) e^{\gamma(\tau+\rho)}},
	\end{equation}
	where $w_c(\tau,\rho)$
	is the critical point of $S(w;\tau,\rho)$
	\eqref{eq:S_via_Li2}
	in the upper half-plane 
	(see \Cref{lemma:critical_points_count_upper_half}).
\end{lemma}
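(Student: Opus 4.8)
The plan is to treat the three summands of \eqref{eq:K_walks_deformed_contours} as follows. The term $\mathbf{1}_{t_1=t_2}\mathbf{1}_{p_1=p_2}$ requires no asymptotics: the prefactor $(-1)^{\Delta t}e^{-\gamma(\tau+\rho)\Delta t}$ on the right-hand side of the lemma equals $1$ exactly when $\Delta t=0$, so this summand already matches its counterpart. It thus remains to show that the combinatorial first summand together with the single contour integral tends to $-(-1)^{\Delta t}e^{-\gamma(\tau+\rho)\Delta t}\mathsf{B}_{\omega}(\Delta t,\Delta p)$ — the single integral producing the incomplete beta kernel off the pole at $u=1$, and the first summand accounting for the residue there.

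First I would compute the $m\to\infty$ limit of the integrand of the single integral. Its only $m$-dependent factors are $q^{-t_1-p_1}$, the ratio $(q;q)_{t_1}/(q;q)_{t_2-1}$, and the ratio of long $q$-Pochhammer symbols $(zq^{1-p_2-t_2};q)_{t_2-1}\big/(zq^{-p_1-t_1};q)_{t_1+1}$ (the monomial $z^{t_1-t_2}=z^{\Delta t}$ carries no $m$). From $q=e^{-\gamma/m}$ and the linear scaling of $t_i,p_i$ one reads off $q^{-t_1-p_1}\to e^{\gamma(\tau+\rho)}$ and $(q;q)_{t_1}/(q;q)_{t_2-1}\to(1-e^{-\gamma\tau})^{\Delta t+1}$. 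For the ratio of long $q$-Pochhammers, the exponents of $q$ in the numerator and denominator products sweep two integer intervals of length $\Theta(m)$ whose endpoints differ only by the fixed integers $\Delta p-1$ and $\Delta p+\Delta t+1$; hence the logarithms of the two products are Riemann sums with a common bulk (each converging to the same increment of $\gamma^{-1}\mathrm{Li}_2$), the bulk cancels, and only finitely many boundary factors survive, converging uniformly on compact subsets away from the poles to an explicit rational function of the shape $(1-ze^{\gamma\rho})^{\Delta p-1}(1-ze^{\gamma(\rho+\tau)})^{-\Delta p-\Delta t-1}$. To pass this to the limit inside the integral one uses that the steepest-descent part of $\gamma_z^\varepsilon$ is $m$-independent (so convergence is uniform there), while the $O(\varepsilon)=O(1/m)$-length modifications near $\mathbb{R}$ contribute negligibly — except that, when $\Delta t<0$, the factor $z^{\Delta t}$ is singular at $z=0$ and the small arc of $\gamma_z^\varepsilon$ around $0$ contributes a finite residue-type term, which I would track separately.

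Next I would apply the real Möbius substitution $u=U(z):=\bigl(1-ze^{\gamma\rho}\bigr)\big/\bigl(1-ze^{\gamma(\rho+\tau)}\bigr)$, which maps the upper half-plane onto itself, sends $w_c\mapsto\omega$, $\overline{w}_c\mapsto\overline{\omega}$ (this is \eqref{eq:omega_for_beta_defn_in_last_step}), and sends $z=0\mapsto u=1$. Using the identities $1-ze^{\gamma\rho}=u(1-e^{\gamma\tau})/(1-ue^{\gamma\tau})$, $1-ze^{\gamma(\rho+\tau)}=(1-e^{\gamma\tau})/(1-ue^{\gamma\tau})$ and $dz=(e^{\gamma\tau}-1)\,du\big/\bigl(e^{\gamma\rho}(1-ue^{\gamma\tau})^{2}\bigr)$, a bookkeeping of powers shows that all factors of $e^{\gamma\tau}-1$, of $1-ue^{\gamma\tau}$ and of $e^{\gamma\rho},e^{\gamma\tau}$ cancel, so the limiting integrand times $dz$ collapses to $\dfrac{(-1)^{\Delta t}e^{-\gamma(\tau+\rho)\Delta t}}{2\pi\mathbf{i}}(1-u)^{\Delta t}u^{-\Delta p-1}\,du$, i.e.\ the integrand of $\mathsf{B}_{\omega}$ in \eqref{eq:incomplete_beta_formula} up to the advertised gauge/sign factor. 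Under $U$ the left part of $\gamma_z^\varepsilon$ becomes an arc from $\omega$ to $\overline{\omega}$ passing near $u=1$; when $\Delta t\ge0$ the integrand is holomorphic at $u=1$ (its only singularity $u=0$ lies off the arc), so this arc is homotopic to the reverse of the defining arc of $\mathsf{B}_{\omega}$ that crosses $(0,1)$, and since the first summand of \eqref{eq:K_walks_deformed_contours} vanishes for $\Delta t\ge0$, the lemma follows in this range.

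The remaining case $\Delta t<0$ is where I expect the real work. Now $(1-u)^{\Delta t}$ has a pole of order $-\Delta t$ at $u=1$, so moving the arc above to the defining arc of $\mathsf{B}_{\omega}$ (which, per \Cref{def:incomplete_beta_kernel}, crosses $(-\infty,0)$ instead) picks up $\pm2\pi\mathbf{i}\,\mathop{\mathrm{Res}}_{u=1}\!\bigl[(1-u)^{\Delta t}u^{-\Delta p-1}\bigr]$, an explicit binomial coefficient, into which the small-circle contribution around $z=0$ from the previous step is also absorbed. On the other side, the first summand of \eqref{eq:K_walks_deformed_contours} has $q$-prefactor $q^{(t_1-t_2)(p_1+t_1)}\to e^{-\gamma\Delta t(\tau+\rho)}$, a $q$-binomial ratio $(q^{\Delta p+\Delta t+1};q)_{-\Delta t-1}/(q;q)_{-\Delta t-1}$ tending as $q\to1$ to a binomial coefficient, and indicators $\mathbf{1}_{t_2>t_1}=\mathbf{1}_{\Delta t<0}$ and $\mathbf{1}_{p_2+t_2>p_1+t_1}=\mathbf{1}_{\Delta p+\Delta t<0}$. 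The plan is to show that this limit and the residue term cancel exactly. Reconciling the two binomial coefficients, matching both indicators to the support and vanishing of the residue, and pinning down all signs (including the contribution of the $z=0$ arc) is the bookkeeping-heavy step and, I expect, the main obstacle; by contrast the Riemann-sum asymptotics of the $q$-Pochhammers and the algebraic change of variables are routine once the set-up is fixed.
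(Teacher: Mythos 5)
Your overall architecture coincides with the paper's: take termwise limits of the $q$-Pochhammer ratios (the telescoping that leaves $(1-ze^{\gamma\rho})^{\Delta p-1}(1-ze^{\gamma(\tau+\rho)})^{-\Delta t-\Delta p-1}$ is exactly what the paper does), apply the M\"obius substitution $u=(1-ze^{\gamma\rho})/(1-ze^{\gamma(\tau+\rho)})$ sending $w_c\mapsto\omega$, and then reconcile the resulting arc with the defining arc of $\mathsf{B}_\omega$, cancelling the combinatorial first summand against a residue when $\Delta t<0$. Up to and including the change of variables your plan is sound (one algebraic caveat: carrying out the bookkeeping actually produces $u^{\Delta p-1}$, not $u^{-\Delta p-1}$; the paper's own displayed computation has $u^{\Delta p-1}$, and this sign matters below).

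The genuine gap is in the $\Delta t<0$ endgame, which you correctly flag as the main obstacle but point at the wrong singularity. The limiting single integral runs over an arc from $\omega$ to $\overline\omega$ crossing the real axis in $(0,1)$; Definition~\ref{def:incomplete_beta_kernel} for $\Delta t<0$ uses the reversed arc crossing $(-\infty,0)$. The closed contour formed by these two arcs encloses $u=0$ and \emph{not} $u=1$ (both arcs pass on the same side of $u=1$), so the deformation picks up $\operatorname{Res}_{u=0}$, not $\operatorname{Res}_{u=1}$. It is precisely $\operatorname{Res}_{u=0}\bigl[(1-u)^{\Delta t}u^{\Delta p-1}\bigr]=\binom{-\Delta t-\Delta p-1}{-\Delta p}=\binom{-\Delta t-\Delta p-1}{-\Delta t-1}$ (supported on $\Delta p\le 0$, $\Delta t+\Delta p<0$) that cancels the limit of the first summand; the residue at $u=1$ gives a different binomial coefficient with the wrong support, so the cancellation you plan would fail. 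Relatedly, there is no separate ``residue-type term'' from the small arc of $\gamma_z^\varepsilon$ around $z=0$ to absorb: the single integral is over an open arc from $w_c$ to $\overline w_c$, and since the integrand's poles near the origin are only at $z=0$ itself, one may fix the radius of that detour independently of $m$ (no poles are crossed in doing so), after which the limiting arc stays at a positive distance from $u=1$ and the pole there never enters the computation.
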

The factor 
$(-1)^{\Delta t}
e^{-\gamma(\tau+\rho)\Delta t}$
is simply a gauge transformation of the kernel which does not change
a determinantal process.
\begin{proof}[Proof of \Cref{lemma:single_integrals}]
	Recall that the quantities
	$\Delta t=t_1-t_2$, $\Delta p=p_1-p_2$ are fixed.
	The
	first three terms in the
	right-hand side of \eqref{eq:K_walks_deformed_contours}
	have the form
	\begin{equation}
		\label{eq:beta_computation_last_step}
		\begin{split}
			&
			\mathbf{1}_{\Delta t=\Delta p=0}
			-
			\mathbf{1}_{\Delta t<0}\mathbf{1}_{\Delta t+\Delta p<0}
			\frac{q^{\Delta t(p_1+t_1)}(q^{\Delta t+\Delta p+1};q)_{-\Delta t-1}}
			{(q;q)_{-\Delta t-1}}
			\\&\hspace{120pt}
			+
			\frac{q^{-t_1-p_1}}{2\pi\mathbf{i}}
			\int_{w_c\to \overline w_c}
			\frac{
			z^{\Delta t}
			(q;q)_{t_2+\Delta t}(zq^{1-p_2-t_2};q)_{t_2-1}}
			{(q;q)_{t_2-1}(zq^{-p_2-t_2-\Delta p-\Delta t};q)_{t_2+\Delta t+1}}
			\ssp dz.
		\end{split}
	\end{equation}
	Here the integration arc is the left part of the contour,
	from $w_c$ to $\overline w_c$ in the counterclockwise order.

	We have for $\Delta t<0$ and $\Delta t+\Delta p<0$:
	\begin{equation*}
		\frac{q^{\Delta t(p_1+t_1)}(q^{\Delta t+\Delta p+1};q)_{-\Delta t-1}}
			{(q;q)_{-\Delta t-1}}
			\to
			(-1)^{-\Delta t-1}
			e^{-\gamma(\tau+\rho)\Delta t} 
			\binom{-\Delta t-\Delta p-1}{-\Delta t-1}.
	\end{equation*}
	Indeed, this is because $\frac{1-q^a}{1-q^b}\to\frac{a}{b}$ for fixed
	$a,b\in \mathbb{Z}_{\ge1}$
	as $q\to 1$.

	In the integrand, we have
	\begin{equation*}
		\frac{
		(q;q)_{t_2+\Delta t}}
		{(q;q)_{t_2-1}}=(q^{t_2};q)_{\Delta t+1}\to
		(1-e^{-\gamma \tau})^{\Delta t+1}
		,
	\end{equation*}
	using the standard notation of the $q$-Pochhammer
	symbol
	$(a;q)_{-k}=(aq^{-k};q)_{k}^{-1}$, $k\in \mathbb{Z}_{\ge0}$,
	with a negative index.
	Similarly,
	\begin{multline*}
		\frac{
		(zq^{1-p_2-t_2};q)_{t_2-1}}
		{(zq^{-p_2-t_2-\Delta p-\Delta t};q)_{t_2+\Delta t+1}}
		\\=
		\frac{(z q^{1-p_2-\Delta p};q)_{\Delta p-1}}{(zq^{-p_2-t_2-\Delta p-\Delta t};q)_{
		\Delta t+\Delta p+1}}
		\to
		\bigl( 1-z e^{\gamma \rho} \bigr)^{\Delta p-1}
		\bigl( 1-z e^{\gamma(\tau+\rho)} \bigr)^{-\Delta t-\Delta p-1}.
	\end{multline*}
	
	Let us make a change of variables
	\begin{equation*}
		u=\frac{1-z e^{\gamma \rho}}{1-z e^{\gamma(\tau+\rho)}},
		\qquad 
		z=e^{-\gamma \rho}\ssp\frac{1-u}{1-e^{\gamma\tau}u},
		\qquad 
		dz=-e^{-\gamma \rho}
		\frac{1-e^{\gamma \tau}}{(1-e^{\gamma \tau}u)^2}\ssp du.
	\end{equation*}
	With this change of variables, \eqref{eq:beta_computation_last_step}
	converges to 
	\begin{equation}
		\label{eq:beta_computation_last_step2}
		\begin{split}
			\mathbf{1}_{\Delta t=\Delta p=0}
			&+
			\mathbf{1}_{\Delta t<0}\mathbf{1}_{\Delta t+\Delta p<0}
			(-1)^{\Delta t}
			e^{-\gamma(\tau+\rho)\Delta t} 
			\binom{-\Delta t-\Delta p-1}{-\Delta t-1}
			\\&+
			\frac{(-1)^{\Delta t}e^{-\gamma(\tau+\rho)\Delta t}}{2\pi\mathbf{i}}
			\int_{\omega\to\overline\omega}
			(1-u)^{\Delta t}u^{\Delta p-1}\ssp du,
		\end{split}
	\end{equation}
	where $\omega$ is given by 
	\eqref{eq:omega_for_beta_defn_in_last_step},
	and this point is in the upper half-plane.
	The integration arc goes from $\omega$ to $\overline\omega$ and crosses the 
	real line between $0$ and $1$.

	In \eqref{eq:beta_computation_last_step2},
	we can remove the overall factor 
	$(-1)^{\Delta t}
	e^{-\gamma(\tau+\rho)\Delta t}$
	as it is a gauge transformation leading to an equivalent determinantal kernel.
	Finally, for $\Delta t<0$, let us write
	\begin{equation*}
		\frac{1}{2\pi\mathbf{i}}
		\int_{\omega\to\overline\omega}
		(1-u)^{\Delta t}u^{\Delta p-1}\ssp du
		=
		-
		\mathop{\mathrm{Res}}\nolimits_{u=0}
		(1-u)^{\Delta t}u^{\Delta p-1}
		-
		\frac{1}{2\pi\mathbf{i}}
		\int_{\overline\omega}^{\omega}
		(1-u)^{\Delta t}u^{\Delta p-1}\ssp du,
	\end{equation*}
	where the integration arc 
	from $\overline\omega$ to $\omega$
	in the right-hand side crosses the real line to the left of 
	$0$. One readily sees that the minus residue at $u=0$ 
	exactly cancels out with the second summand 
	in \eqref{eq:beta_computation_last_step2}.
	For $\Delta t\ge0$, the integral in \eqref{eq:beta_computation_last_step2}
	is equal to 
	$-
	\frac{1}{2\pi\mathbf{i}}
	\int_{\overline\omega}^{\omega}
	(1-u)^{\Delta t}u^{\Delta p-1}\ssp du$,
	where the integration arc from $\overline\omega$ to $\omega$
	crosses the real line between $0$ and $1$.
	This completes the proof.
\end{proof}

The contour deformations in the kernel
$K_{\mathrm{walks}}$
\eqref{eq:K_walks_for_deformation}
and
\Cref{lemma:single_integrals,lemma:double_goes_to_zero}
complete the proof of
\Cref{thm:bulk_limit}.

\medskip

\textsc{University of Virginia, Charlottesville, VA, USA}

E-mail: \texttt{lenia.petrov@gmail.com}

E-mail: \texttt{me@mtikhonov.com}

\end{document}